\newtheorem{theorem}{Theorem}
\newtheorem{lemma}[theorem]{Lemma}
\newtheorem{proposition}[theorem]{Proposition}
\newtheorem{example}[theorem]{Example}
\newtheorem{remark}[theorem]{Remark}
\newcommand{\mone}{{\,\text{-}1}}
\font\sc=rsfs10
\newcommand{\cC}{\sc\mbox{C}\hspace{1.0pt}}
\newcommand{\cG}{\sc\mbox{G}\hspace{1.0pt}}
\newcommand{\cA}{\sc\mbox{A}\hspace{1.0pt}}
\newcommand{\cH}{\sc\mbox{H}\hspace{1.0pt}}
\font\scc=rsfs7
\newcommand{\ccC}{\scc\mbox{C}\hspace{1.0pt}}
\begin{document}
\title[$2$-categories of symmetric bimodules and their $2$-representations]
{$2$-categories of symmetric bimodules\\ and their $2$-representations}

\author[V.~Mazorchuk, V.~Miemietz and X.~Zhang]
{Volodymyr Mazorchuk, Vanessa Miemietz and Xiaoting Zhang}

\begin{abstract}
In this article we analyze the structure of $2$-categories of 
symmetric projective bimodules over a finite dimensional algebra
with respect to the action of a finite abelian group. We determine
under which condition the resulting $2$-category is fiat (in the
sense of \cite{MM1}) and classify simple transitive $2$-representations
of this $2$-category (under some mild technical assumption).
We also study several classes of examples in detail.
\end{abstract}

\maketitle

\section{Introduction and description of the results}\label{s1}

In the last 20 years, many exciting breakthroughs in representation theory,
see e.g. \cite{Kh,CR,BS,EW,Wi}, have originated from the idea of categorification. 
This has inspired the subject of $2$-representation theory which studies 
$2$-categories with suitable finiteness conditions. An appropriate $2$-analogue 
of a finite dimensional algebra was defined in \cite{MM1} and called finitary 
$2$-category.  Various aspects of the structure and $2$-representation theory
of finitary and, more specifically, fiat $2$-categories have been studied
in \cite{MM1,MM2,MM3,MM4,MM5,MM6,MMMT,MMMZ,CM,ChMi}, see also references therein.
In particular, \cite{MM5} introduces the notion of simple transitive $2$-representation
which is an appropriate categorification of the concept of an irreducible representation.
A natural and interesting problem is the classification of simple transitive $2$-representations
for various classes of $2$-categories, see \cite{Ma}. 

One interesting example of a fiat $2$-category is the $2$-category of Soergel bimodules
associated to the coinvariant algebra of a finite Coxeter system, see \cite{So1,So2,EW}.
For these $2$-categories, simple transitive $2$-representations have been classified
in several special cases including type $A$, the dihedral types and some small classical
types, see \cite{MM5,KMMZ,MT,MMMZ}. The article \cite{MMMZ} develops a reduction technique
that reduces the classification problem to smaller $2$-categories which, in practice, 
are often given by ``symmetric bimodules'' as defined in a special case in \cite{KMMZ}.

Inspired by this, in the present article we formalize the concept of symmetric bimodules
and study the resulting $2$-categories of projective symmetric bimodules and their 
$2$-representations. We show that these $2$-categories are weakly fiat provided that
the underlying algebra is self-injective and fiat if the underlying algebra is
weakly symmetric, mirroring the situation for the $2$-category 
of all projective bimodules from \cite[Subsection~7.3]{MM1} and \cite[Subsection~2.8]{MM6}.
Using \cite{MMMZ}, we reduce the classification of simple transitive $2$-representations
in the weakly symmetric case to the classification of module categories over the
$2$-category $\mathrm{Rep}(G)$ from \cite{Os}.

One of the main results of \cite{MM3} classifies a class of ``simple'' $2$-categories
with a particularly nice combinatorial structure. Here we study one of the smallest
families of $2$-categories which do not fit into the setup of \cite{MM3}. We show
that these can always be realized inside a $2$-category of symmetric bimodules.

The paper is organized as follows. In Section~\ref{s2} we collect the necessary 
preliminaries. In Section~\ref{s3} we introduce symmetric bimodules and study 
their structure and simple transitive $2$-representations. The latter are classified
in Theorem~\ref{thm11}. In Section~\ref{s7} we study $2$-categories with one object
which, apart from the identity $1$-morphism, have precisely two indecomposable 
$1$-morphisms up to isomorphism and these two form a biadjoint pair and their own
left/right/two-sided cell. In Theorem~\ref{prop41} we realize such $2$-categories
as $2$-subcategories of certain symmetric bimodules. 
\vspace{0.5cm}

{\bf Acknowledgements.}
The first author is supported by the Swedish Research Council,
G{\"o}ran Gustafsson Foundation and Vergstiftelsen. 
The second author is supported by EPSRC grant EP/S017216/1.
The third author is supported by G{\"o}ran Gustafsson Foundation.
\vspace{1cm}

\section{Preliminaries}\label{s2}

\subsection{Setup}\label{s2.1}

We work over an algebraically closed field $\Bbbk$.

\subsection{Finitary $2$-categories}\label{s2.2}

A $\Bbbk$-linear category is called {\em finitary} if it is small and 
equivalent to the category of finitely generated projective modules over
some finite dimensional associative $\Bbbk$-algebra.

We call a $2$-category $\cC$  {\em finitary}  (over $\Bbbk$) if
it has finitely many objects, each morphism category $\cC(\mathtt{i},\mathtt{j})$ is a 
finitary $\Bbbk$-linear category, all compositions are (bi)additive and 
$\Bbbk$-linear and all identity $1$-morphisms are indecomposable (cf. \cite[Subsection~2.2]{MM1}).  

We say that $\cC$ is {\em weakly fiat} if it is finitary and has a 
weak antiautomorphism $(\,\,)^{\star}$ of finite order and adjunction morphisms, 
see \cite[Subsection~2.5]{MM6}. If $(\,\,)^{\star}$ is involutive, we say that 
$\cC$ is {\em fiat}, see \cite[Subsection~2.4]{MM1}.

\subsection{$2$-representations}\label{s2.3}

Let $\cC$ be a finitary $2$-category.
A {\em finitary $2$-representation} of $\cC$ is a $2$-functor from $\cC$
to the $2$-category $\mathfrak{A}_{\Bbbk}^{f}$ whose
\begin{itemize}
\item objects are finitary $\Bbbk$-linear categories;
\item $1$-morphisms are additive $\Bbbk$-linear functors;
\item $2$-morphisms are natural transformations of functors.
\end{itemize}
Such $2$-representations form a $2$-category denoted $\cC\text{-}\mathrm{afmod}$, 
see \cite[Subsection~2.3]{MM3}. 

Similarly we define an {\em abelian $2$-representation} of $\cC$ 
as a $2$-functor from $\cC$ to the $2$-category whose
\begin{itemize}
\item objects are categories equivalent to categories of finitely generated modules 
over finite dimensional $\Bbbk$-algebras;
\item $1$-morphisms are right exact $\Bbbk$-linear functors;
\item $2$-morphisms are natural transformations of functors.
\end{itemize}
Such $2$-representations form a $2$-category denoted $\cC\text{-}\mathrm{mod}$, 
see \cite[Subsection~2.3]{MM3}. Following \cite[Subsection~4.2]{MM2}, we denote by
$\overline{\,\,\cdot\,\,}$ the {\em abelianization} $2$-functor from $\cC\text{-}\mathrm{afmod}$
to $\cC\text{-}\mathrm{mod}$.

A finitary $2$-representation $\mathbf{M}$ is called {\em simple transitive}
provided that $\displaystyle\coprod_{\mathtt{i}\in\ccC}\mathbf{M}(\mathtt{i})$ has no 
non-trivial $\cC$-invariant ideals, see \cite[Subsection~3.5]{MM5}.

\subsection{Cells and cell $2$-representations}\label{s2.21}

Given two indecomposable $1$-morphisms $\mathrm{F}$ and $\mathrm{G}$ in $\cC$, we define
$\mathrm{F}\geq_L\mathrm{G}$ if $\mathrm{F}$ is isomorphic to a direct summand
of $\mathrm{H}\circ\mathrm{G}$, for some $1$-morphism $\mathrm{H}$. This produces the
{\em left} preorder  $\geq_L$, of which the equivalence classes are called {\em left cells}.
Similarly one obtains the {\em right} preorder $\geq_R$ and the corresponding
{\em right cells}, and the {\em two-sided} preorder $\geq_J$ and the corresponding
{\em two-sided cells}.

For each simple transitive $2$-representation $\mathbf{M}$, there is a unique
two-sided cell which is maximal, with respect to $\geq_J$, among those two-sided
cells whose $1$-morphisms are not annihilated by $\mathbf{M}$. This 
two-sided cell is called the {\em apex} of $\mathbf{M}$, see \cite[Subsection~3.2]{CM}.

If $\mathcal{J}$ is a two-sided cell of $\cC$, we say that $\cC$ is {\em $\mathcal{J}$-simple}
if any non-zero $2$-ideal of $\cC$ contains the identity $2$-morphisms of all $1$-morphisms
in $\mathcal{J}$.

Each left cell of a fiat $2$-category contains a so-called {\em Duflo involution},
see \cite[Subsection~4.5]{MM1}.

For a left cell $\mathcal{L}$ in $\cC$, there exists $\mathtt{i}\in \cC$ such that all 
$1$-morphisms in $\mathcal{L}$ starts at $\mathtt{i}$. Denote by $\mathbf{N}_{\mathcal{L}}$ 
the $2$-subrepresentation of $\mathbf{P}_{\mathtt{i}}$ which is defined as
for each $\mathtt{j}\in\cC$ the category $\mathbf{N}_{\mathcal{L}}(\mathtt{j})$ is the 
additive closure of $\mathbf{P}_{\mathtt{i}}(\mathtt{j})$ consisting of all $1$-morphisms $\mathrm{F}$ 
with $\mathrm{F}\geq_{\mathcal{L}}\mathcal{L}$. From \cite[Lemma~3]{MM5}, we know that the 
$2$-representation $\mathbf{N}_{\mathcal{L}}$ contains a unique maximal ideal which does not 
contain any $\mathrm{id}_{\mathrm{F}}$ for $\mathrm{F}\in\mathcal{L}$, denoted $\mathcal{I}_{\mathcal{L}}$.
The quotient $\mathbf{C}_{\mathcal{L}}:=\mathbf{N}_{\mathcal{L}}/\mathcal{I}_{\mathcal{L}}$
is called the {\em cell $2$-representation} associated to $\mathcal{L}$.

\section{Symmetric bimodules and their simple transitive $2$-representations}\label{s3}

\subsection{Symmetric bimodules}\label{s3.1}

Let $A$ be a finite dimensional, unital, associative $\Bbbk$-algebra.
We assume that $A$ is basic and that $\{e_1,e_2,\dots,e_k\}$ is a complete
set of pairwise orthogonal primitive idempotents in $A$.

Let $G$ be a finite abelian subgroup of the group of automorphisms of $A$.
Assume that $\mathrm{char}(\Bbbk)$ does not divide $|G|$.
The action of $G$ on $A$ induces an action of $G$ on the category of $A$-$A$-bimodules via
$M\mapsto {}^{\varphi}M^{\varphi^{}}$, where the action of $A$ on ${}^{\varphi}M^{\varphi^{}}$
is given by
\begin{displaymath}
a\cdot m\cdot b:= \varphi(a)m\varphi^{}(b), \quad \text{ for all }a,b\in A\text{ and }m\in M.
\end{displaymath}
We will write ${}^{\varphi}f^{\varphi^{}}$ for the translate of a morphism $f$ under the action of
$\varphi\in G$.

Let $\mathcal{X}$ denote the category whose objects are $A$-$A$-bimodules and morphisms between
$A$-$A$-bimodules $M$ and $N$ are defined by
\begin{displaymath}
\mathrm{Hom}_{\mathcal{X}}(M,N):=
\bigoplus_{\varphi\in G}\mathrm{Hom}_{A\text{-}A}(M,{}^{\varphi}N^{\varphi^{}}).
\end{displaymath}
An element $f\in \mathrm{Hom}_{\mathcal{X}}(M,N)$ is thus represented by a tuple
$(f_{\varphi})_{\varphi\in G}$, where the component $f_{\varphi}$ is in
$\mathrm{Hom}_{A\text{-}A}(M,{}^{\varphi}N^{\varphi^{}})$.
For any $f\in \mathrm{Hom}_{\mathcal{X}}(M,N)$ and $g\in  \mathrm{Hom}_{\mathcal{X}}(N,K)$, considering
\begin{equation*}
\begin{array}{ccc}
\mathrm{Hom}_{A\text{-}A}(N,{}^{\psi}K^{\psi^{}})\otimes\mathrm{Hom}_{A\text{-}A}(M,{}^{\varphi}N^{\varphi^{}})
&\to& \mathrm{Hom}_{A\text{-}A}(M,{}^{\varphi\psi}K^{\varphi^{}\psi^{}})\\
g_{\psi}\otimes f_{\varphi}&\mapsto& {}^{\varphi}(g_{\psi})^{\varphi^{}}\circ f_{\varphi},
\end{array}
\end{equation*}
where we use $\varphi^{}\psi^{}=\psi^{}\varphi^{}$ on the right hand side,
the composition $g\circ f$ is given by
\begin{equation}\label{eq1}
\begin{array}{ccc}
\mathrm{Hom}_{\mathcal{X}}(N,K)\otimes\mathrm{Hom}_{\mathcal{X}}(M,N)
&\to& \mathrm{Hom}_{\mathcal{X}}(M,K)\\
(g_{\psi})_{\psi\in G}\otimes (f_{\varphi})_{\varphi\in G}&\mapsto& \big(\displaystyle\sum_{\varphi\in G}{}^{\varphi}(g_{\sigma\varphi^{\mone}})^{\varphi}\circ f_{\varphi}\big)_{\sigma\in G}.
\end{array}
\end{equation}
This composition can be depicted by the diagram
\begin{displaymath}
\xymatrix{M\ar[rrr]^{(f_{\varphi})_{\varphi\in G}}&&&\displaystyle\bigoplus_{\varphi\in G} {}^{\varphi}N^{\varphi}\ar[rrr]^{({}^{\varphi}(g_{\sigma\varphi^{\mone}})^{\varphi})_{\varphi,\sigma\in G}}&&&\displaystyle\bigoplus_{\sigma\in G} {}^{\sigma}K^{\sigma}}.
\end{displaymath}
We refer to \cite{CiMa} for details.

We denote by $\tilde{{\mathcal{X}}}$ the idempotent completion of $\mathcal{X}$, i.e. an object of $\tilde{{\mathcal{X}}}$ is given by a pair $(M,e)$ where $M$ is an $A$-$A$-bimodule and $e$ is an idempotent in $\mathrm{End}_{\mathcal{X}}(M)$. For an $A$-$A$-bimodule $M$, set
\begin{displaymath}
G_M:=\{\varphi\in G\,\vert\, M\cong {}^{\varphi}M^{\varphi^{}}\}
\end{displaymath}
which is a subgroup of $G$.

\begin{remark}\label{rem001}
{\em
As we will often encounter and use in this article, computation of homomorphism in
$\tilde{\mathcal{X}}$ using homomorphisms in $\mathcal{X}$ requires care.
Given $M$ and $M'$ in $\mathcal{X}$ and idempotents $e$ and $e'$ in
$\mathrm{End}_{\mathcal{X}}(M)$ and $\mathrm{End}_{\mathcal{X}}(M')$, respectively,
in the computation of $\mathrm{Hom}_{\tilde{\mathcal{X}}}((M,e),(M',e'))$
using $\mathrm{Hom}_{\mathcal{X}}(M,M')$
it is very important to make sure that the elements from $\mathrm{Hom}_{\mathcal{X}}(M,M')$
one works with do belong to $e'\circ \mathrm{Hom}_{\mathcal{X}}(M,M')\circ e$.
This is usually achieved by pre- and postcomposing the elements one works
with $e$ and $e'$, respectively.
Moreover, for any element $\boldsymbol{f}$ in $\mathrm{Hom}_{\tilde{\mathcal{X}}}((M,e),(M',e'))$,
we have
\begin{equation}\label{eq00}
\boldsymbol{f}\circ \text{id}_{(M,e)}=\boldsymbol{f}=\text{id}_{(M',e')}\circ\boldsymbol{f},
\end{equation}
where, in fact, $\text{id}_{(M,e)}=e$ and $\text{id}_{(M',e')}=e'$.
}
\end{remark}

As usual, we denote by $\hat{G}$ the {\em Pontryagin dual} of $G$ whose elements are
all group homomorphisms from $G$ to $\Bbbk^{*}$ with respect to point-wise multiplication.
As $G$ is finite and abelian, the group $\hat{G}$ is (non-canonically) isomorphic to $G$ and
$\hat{G}$ is canonically isomorphic to the group of isomorphism classes of simple $G$-modules
with respect to taking tensor products.

The group algebra $\Bbbk[G]$ is commutative and semi-simple and admits a unique decomposition
into a product of $|G|$ copies of $\Bbbk$. Let $\{\pi_{\chi},\chi\in \hat{G}\}$ be the corresponding
primitive idempotents. Each $\pi_{\chi}$ has the form
{\small$\displaystyle\frac{1}{|G|}\sum_{\alpha\in G}\chi(\alpha)\alpha$} and hence defines an idempotent
$\tilde{\pi}_{\chi}$ in $\mathrm{End}_{\mathcal{X}}(A)$ given by
the tuple {\small $\left(\frac{\chi(\alpha)}{|G|}\alpha\right)_{\alpha\in G}$}.

For an arbitrary subgroup $H$ of $G$, we have a natural surjection $\hat{G}\to\hat{H}$ given
by restriction. For $\zeta\in \hat{G}$ and $\chi\in \hat{H}$, we define
$\chi\zeta\in \hat{H}$ via
$\chi\zeta(\alpha):=\chi(\alpha)\zeta(\alpha)$, for $\alpha\in H$.

\begin{lemma}\label{lem1}
{\hspace{1mm}}

\begin{enumerate}[$($i$)$]
\item\label{lem1.1}
Let $M$ be an indecomposable  $A$-$A$-bimodule. Then there is an isomorphism of algebras
\begin{displaymath}
\mathrm{End}_{\mathcal{X}}(M)/\mathrm{Rad}(\mathrm{End}_{\mathcal{X}}(M))\cong
\Bbbk[{G}_M]/\mathrm{Rad}(\Bbbk[G_M])\cong\Bbbk[{G}_M].
\end{displaymath}
\item\label{lem1.2} Indecomposable objects of $\tilde{{\mathcal{X}}}$ are of the form
$M_{\varepsilon_{\chi}}:=(M,\varepsilon_{\chi})$, where
$M$ is an indecomposable  $A$-$A$-bimodule and $\chi\in \hat{G}_M$.
Here, for $\alpha\in G$, the $\alpha$-component of $\varepsilon_{\chi}$ is
$\frac{\chi(\alpha)}{|G_M|}\alpha$, if $\alpha\in G_M$, and zero otherwise.
\end{enumerate}
\end{lemma}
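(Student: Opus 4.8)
The plan is to analyze the finite-dimensional algebra
$E:=\mathrm{End}_{\mathcal X}(M)=\bigoplus_{\varphi\in G}\mathrm{Hom}_{A\text{-}A}(M,{}^{\varphi}M^{\varphi})$
as a $G$-graded algebra, to identify its radical, and then to identify the semisimple quotient with $\Bbbk[G_M]$; part (ii) will follow from (i) by lifting idempotents. Note that $\Bbbk[G_M]$ is semisimple since $\mathrm{char}(\Bbbk)\nmid|G_M|$, so the second isomorphism in (i) is automatic and only $E/\mathrm{Rad}(E)\cong\Bbbk[G_M]$ needs proof.

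First I would fix, for each $\varphi\in G_M$, an isomorphism $\theta_\varphi\colon M\to{}^\varphi M^\varphi$ of $A$-$A$-bimodules (with $\theta_e=\mathrm{id}_M$), regarded as the homogeneous degree-$\varphi$ element of $E$. As $M$ is indecomposable, $R:=\mathrm{End}_{A\text{-}A}(M)$ is local with $R/\mathrm{Rad}(R)\cong\Bbbk$, the degree-$\varphi$ component is the free rank-one $R$-bimodule $R\theta_\varphi=\theta_\varphi R$ for $\varphi\in G_M$, while for $\varphi\notin G_M$ the indecomposables $M$ and ${}^\varphi M^\varphi$ are non-isomorphic, so every degree-$\varphi$ element is a non-isomorphism. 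Set
\[
N:=\bigoplus_{\varphi\in G}\mathrm{rad}_{A\text{-}A}(M,{}^{\varphi}M^{\varphi}),
\]
the span of all non-isomorphisms. Since $\mathrm{rad}$ is an ideal in the category of $A$-$A$-bimodules, the composition formula \eqref{eq1} shows $N$ is a two-sided ideal of $E$, and $E/N$ is $G_M$-graded with one-dimensional components $\Bbbk\bar\theta_\varphi$, hence a twisted group algebra $\Bbbk^{c}[G_M]$; as $\mathrm{char}(\Bbbk)\nmid|G_M|$ it is semisimple, so $\mathrm{Rad}(E)\subseteq N$. For the reverse inclusion I would use that $\{{}^\varphi M^\varphi:\varphi\in G\}$ is a finite set of indecomposables, so the additive category it generates is of finite representation type and its categorical radical is nilpotent; hence $N$ is nilpotent and $N=\mathrm{Rad}(E)$.

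This gives $E/\mathrm{Rad}(E)\cong\Bbbk^{c}[G_M]$, and the main obstacle is to show that the two-cocycle $c$ is trivial, so that $\Bbbk^{c}[G_M]\cong\Bbbk[G_M]\cong\Bbbk^{|G_M|}$. This genuinely requires the geometry of the situation: for an abstract abelian group a twisted group algebra need not be untwisted (the nontrivial class on $(\mathbb Z/2)^2$ yields $M_2(\Bbbk)$), which would contradict the count in (ii). The key is that the action twists on the left and on the right by the same automorphism. Writing $\theta_\varphi$ as a $\Bbbk$-linear map $T_\varphi\colon M\to M$ with $T_\varphi(amb)=\varphi(a)T_\varphi(m)\varphi(b)$, the commutator $T_\varphi T_\psi T_\varphi^{-1}T_\psi^{-1}$, whose image in $\mathrm{End}(M)/\mathrm{Rad}\cong\Bbbk$ is the scalar measuring non-commutativity of $\Bbbk^{c}[G_M]$, receives one contribution from the left module structure and one from the right. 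Passing to the semisimple quotient $A/\mathrm{Rad}(A)$ (on which $G$ acts by permuting the primitive idempotents, as $A$ is basic) and using that the right structure is the opposite of a left structure, these two contributions are mutually inverse and cancel. Hence $c$ is symmetric, $E/\mathrm{Rad}(E)$ is commutative, and being semisimple over an algebraically closed field it is $\cong\Bbbk[G_M]$, proving (i). I expect this cancellation to be the delicate point, and the place where the diagonal nature of the action is essential.

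For (ii), with a coherent family $\theta_\alpha$ in hand I would lift the primitive orthogonal idempotents $\pi_\chi=\frac1{|G_M|}\sum_{\alpha\in G_M}\chi(\alpha)\alpha$ of $\Bbbk[G_M]\cong E/\mathrm{Rad}(E)$ back to $E$. A direct check with \eqref{eq1}, using $\chi(\sigma\varphi^{-1})\chi(\varphi)=\chi(\sigma)$ and ${}^{\varphi}(\theta_{\sigma\varphi^{-1}})^{\varphi}\circ\theta_\varphi=\theta_\sigma$, shows that the tuple $\varepsilon_\chi$ with $\alpha$-component $\frac{\chi(\alpha)}{|G_M|}\theta_\alpha$ for $\alpha\in G_M$ and zero otherwise is idempotent with image $\pi_\chi$ in $E/\mathrm{Rad}(E)$, hence primitive; by orthogonality of characters $\sum_{\chi\in\hat G_M}\varepsilon_\chi=\mathrm{id}_M$, giving a complete set of primitive orthogonal idempotents and thus the indecomposable summands $(M,\varepsilon_\chi)$ of $(M,\mathrm{id})$. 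Finally, every object of $\mathcal X$ is a finite direct sum of indecomposable bimodules, and in $\tilde{\mathcal X}$ one has $(\bigoplus_i M_i,\mathrm{id})\cong\bigoplus_i(M_i,\mathrm{id})$ via the degree-$e$ idempotents, so every indecomposable object of $\tilde{\mathcal X}$ is a summand of some $(M,\mathrm{id})$ and therefore isomorphic to one of the $(M,\varepsilon_\chi)$, which establishes (ii).
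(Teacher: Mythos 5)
Your overall architecture is sound and is considerably more careful than the paper's own proof, which consists of the single observation that the components indexed by $\alpha\notin G_M$ lie in the radical, plus the assertion that the rest ``follows from the composition formula and the definitions''. You correctly isolate what that assertion actually needs: the quotient $\mathrm{End}_{\mathcal{X}}(M)/\mathrm{Rad}$ is a priori only a twisted group algebra $\Bbbk^{c}[G_M]$, and the lemma is equivalent to the vanishing of $[c]\in H^2(G_M,\Bbbk^*)$. The genuine gap is your argument for this vanishing. The ``cancellation of a left and a right contribution'' is not a proof: the commutator $T_\varphi T_\psi T_\varphi^{-1}T_\psi^{-1}$ is a single scalar modulo the radical, and nothing splits it into a left factor and a right factor for a general indecomposable bimodule; even when $M=X\otimes_{\Bbbk}Y$ with $X$ a left and $Y$ a right module, the class of $M$ is the sum of the classes of $X$ and of $Y$, which are unrelated, so there is no cancellation. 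Moreover, since the commutator equals a scalar times $\mathrm{id}_M$ plus a radical term, it induces that same scalar on any subquotient, so passing to $A/\mathrm{Rad}(A)$ cannot trivialize it.

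In fact no argument can close this gap, because the statement is false in the stated generality. Let $A=\Bbbk Q$ where $Q$ is the four-subspace quiver (vertices $0,1,2,3,4$, one arrow $i\to 0$ for each $i\geq 1$), and let $G\cong(\mathbb{Z}/2)^2\leq\mathrm{Aut}(A)$ act by the three double transpositions of $\{1,2,3,4\}$. Let $X$ be the indecomposable left module with dimension vector $(2;1,1,1,1)$ whose four image lines are four distinct points of $\mathbb{P}^1$; double transpositions of four points preserve the cross ratio, so ${}^{\varphi}X\cong X$ (twist on the left only) for every $\varphi\in G$. Let $Y$ be the simple right module at vertex $0$, which satisfies $Y^{\varphi}=Y$, and set $M=X\otimes_{\Bbbk}Y$: an indecomposable $A$-$A$-bimodule with $G_M=G$. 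Each graded component of $\mathrm{End}_{\mathcal{X}}(M)$ is one-dimensional, spanned by $T_\varphi\otimes\mathrm{id}_Y$, where the vertex-$0$ component of $T_\varphi$ is a lift to $GL_2(\Bbbk)$ of the M\"{o}bius involution permuting the four lines. Lifts of two distinct commuting involutions of $PGL_2(\Bbbk)$ necessarily anticommute: they are invertible trace-zero matrices, their commutator is $\pm\mathrm{id}$ by taking determinants, and if they commuted they would be simultaneously diagonalizable, hence proportional, hence equal in $PGL_2(\Bbbk)$. Therefore $\mathrm{End}_{\mathcal{X}}(M)\cong\Bbbk^{c}[(\mathbb{Z}/2)^2]\cong\mathrm{Mat}_2(\Bbbk)$ with $[c]\neq 0$; this is semisimple, noncommutative, and not isomorphic to $\Bbbk[G_M]$, and $(M,\mathrm{id})$ splits in $\tilde{\mathcal{X}}$ into two isomorphic indecomposables rather than the four objects $(M,\varepsilon_\chi)$, so both parts of the lemma fail for this $M$. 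The correct scope of the lemma (and the tacit assumption in the paper's sketch, visible already in the notation $\left(\tfrac{\chi(\alpha)}{|G_M|}\alpha\right)_{\alpha\in G_M}$, which presupposes a multiplicative family of homogeneous isomorphisms) is the class of bimodules admitting an honest $G_M$-equivariant structure; this holds for the summands of $A$ and $A\otimes_{\Bbbk}A$ that the $2$-category $\cG_A$ is actually built from, where the maps induced by $\alpha$ itself provide such a family, and there your idempotent-lifting argument for part (ii) goes through verbatim.
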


\begin{proof}
Note that, for $\alpha\in G$, if ${}^{\alpha}M^{\alpha}$ is not isomorphic to $M$, then
the $\alpha$-component of any endomorphism of $M$ belongs to the radical of $\mathrm{End}_{\mathcal{X}}(M)$.
Therefore Claim~\eqref{lem1.1} follows from \eqref{eq1}. Claim~\eqref{lem1.2} follows from \eqref{eq1}
and the definitions.
\end{proof}

The category of all $A$-$A$-bimodules has a natural monoidal structure given by the tensor product over $A$.
We define a tensor product on $\mathcal{X}$ by
\begin{itemize}
\item $\displaystyle M\otimes_{\mathcal{X}}N:=\bigoplus_{\varphi\in G}
\big(M\otimes_A {}^{\varphi}N^{\varphi^{}}\big)$, for any $A$-$A$-bimodules $M$ and $N$,
\item $f\otimes_{\mathcal{X}}g:=\big(f_{\alpha}\otimes {}^{\gamma}(g_{\beta\gamma^{\mone}})^{\gamma}\big)_{\alpha,\beta,\gamma\in G}$,
where
\begin{displaymath}
f_{\alpha}\otimes {}^{\gamma}(g_{\beta\gamma^{\mone}})^{\gamma}:
M\otimes_A {}^{\gamma}N^{\gamma^{}}\to {}^{\alpha}(M')^{\alpha}
\otimes_A {}^{\beta}(N')^{\beta},
\end{displaymath}
for any $A$-$A$-bimodules $M$, $M'$, $N$ and $N'$ and morphisms
\begin{displaymath}
f=(f_{\alpha})_{\alpha\in G}\in\mathrm{Hom}_{\mathcal{X}}(M,M'),\quad
g=(g_{\beta})_{\beta\in G}\in\mathrm{Hom}_{\mathcal{X}}(N,N').
\end{displaymath}
\end{itemize}
The asymmetry of the above definition is only notational as the following lemma shows.

\begin{lemma}\label{lem3}
In the category $\mathcal{X}$, there is an isomorphism
\begin{displaymath}
\bigoplus_{\varphi\in G}
\big(M\otimes_A {}^{\varphi}N^{\varphi^{}}\big)\cong \bigoplus_{\varphi\in G}
\big({}^{\varphi^{\mone}}M^{\varphi^{\mone}}\otimes_A N\big).
\end{displaymath}
\end{lemma}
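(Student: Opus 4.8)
The plan is to read the asserted isomorphism as one in $\mathcal{X}$ rather than as a plain isomorphism of $A$-$A$-bimodules (the latter fails in general, since the naive identity map $m\otimes n\mapsto m\otimes n$ is not $A$-$A$-bilinear), and to assemble it from a single observation: for each $\varphi\in G$ the twisting functor $\varphi_\ast:={}^{\varphi}(-)^{\varphi}$ is a strong monoidal autoequivalence of the monoidal category of $A$-$A$-bimodules. Concretely, the assignment $m\otimes n\mapsto m\otimes n$ on underlying sets is an isomorphism $\varphi_\ast(X\otimes_A Y)\xrightarrow{\sim}\varphi_\ast X\otimes_A\varphi_\ast Y$ of bimodules, and reading off the twisted action from the definitions shows $\varphi_\ast\circ\varphi^{-1}_\ast=\mathrm{id}$ on the nose. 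I would record these two facts first, since they carry all the content.

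Using them I would establish the isomorphism one summand at a time. Fixing $\varphi\in G$, a direct computation as $A$-$A$-bimodules gives
\[
{}^{\varphi}\bigl({}^{\varphi^{-1}}M^{\varphi^{-1}}\otimes_A N\bigr)^{\varphi}
=\varphi_\ast\bigl(\varphi^{-1}_\ast M\otimes_A N\bigr)
\cong \varphi_\ast\varphi^{-1}_\ast M\otimes_A\varphi_\ast N
= M\otimes_A{}^{\varphi}N^{\varphi},
\]
where the middle isomorphism is monoidality and the outer equalities are the two facts above. Thus the $\varphi$-th summand on the left is isomorphic, as a bimodule, to the $\varphi$-\emph{twist} of the $\varphi$-th summand on the right.

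It then remains to promote such a twisted bimodule isomorphism to an honest isomorphism in $\mathcal{X}$. For this I would prove the general principle that, if $\alpha\colon P\xrightarrow{\sim}{}^{\varphi}Q^{\varphi}$ is an isomorphism of bimodules, then the morphism $\tilde\alpha\in\mathrm{Hom}_{\mathcal{X}}(P,Q)$ whose $\varphi$-component is $\alpha$ and whose other components vanish is an isomorphism in $\mathcal{X}$. Its inverse is the morphism $\beta\in\mathrm{Hom}_{\mathcal{X}}(Q,P)$ concentrated in the $\varphi^{-1}$-component with $\beta_{\varphi^{-1}}={}^{\varphi^{-1}}(\alpha^{-1})^{\varphi^{-1}}$: applying the composition formula \eqref{eq1}, the only surviving term of $\beta\circ\tilde\alpha$ is the identity component, where it equals ${}^{\varphi}(\beta_{\varphi^{-1}})^{\varphi}\circ\alpha=\alpha^{-1}\circ\alpha=\mathrm{id}_P$, and symmetrically for $\tilde\alpha\circ\beta$. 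Applying this with $P=M\otimes_A{}^{\varphi}N^{\varphi}$ and $Q={}^{\varphi^{-1}}M^{\varphi^{-1}}\otimes_A N$, and taking the direct sum over $\varphi\in G$ (which is the biproduct in the additive category $\mathcal{X}$, so sends isomorphisms to isomorphisms), yields the claim.

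The main obstacle is not any single computation but keeping the bookkeeping of twists straight across the three layers involved: the monoidal constraint for $\varphi_\ast$, the cancellation $\varphi_\ast\varphi^{-1}_\ast=\mathrm{id}$, and the nontrivial composition law \eqref{eq1} in $\mathcal{X}$. In particular one must remember that the sought isomorphism does not live in the identity component of $\mathrm{Hom}_{\mathcal{X}}$ but in the $\varphi$-component, so that it is precisely the failure of the naive identity map to intertwine the untwisted $A$-actions that is repaired by recording it as a morphism of $\mathcal{X}$.
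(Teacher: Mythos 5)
Your proposal is correct and takes essentially the same approach as the paper: the paper's proof likewise rests on the observation that (essentially) the identity map gives a bimodule isomorphism between ${}^{\varphi^{-1}}M^{\varphi^{-1}}\otimes_A N$ and the $\varphi^{-1}$-twist of $M\otimes_A{}^{\varphi}N^{\varphi}$, which is then packaged as a morphism in $\mathcal{X}$ concentrated in a single non-identity component. Your additional explicit check, via the composition law \eqref{eq1}, that such a one-component twisted isomorphism is invertible in $\mathcal{X}$ is left implicit in the paper, but this is a difference of detail, not of method.
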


\begin{proof}
We first note that the map $m\otimes n\mapsto m\otimes n$  gives rise to an isomorphism
of $A$-$A$-bimodules from $M\otimes_A{}^{\varphi}N$ to $M^{\varphi^{\mone}}\otimes_A N$. Thus we have an isomorphism
\begin{equation}\label{eq2}
{}^{\varphi^{\mone}}M^{\varphi^{\mone}}\otimes_A N\cong
{}^{\varphi^{\mone}}\left(M\otimes_A{}^{\varphi}N^{\varphi}\right)^{\varphi^{\mone}}
\end{equation}
of $A$-$A$-bimodules. We hence have an isomorphism
\begin{displaymath}
\xymatrix{{}^{\varphi^{\mone}}M^{\varphi^{\mone}}\otimes_A N\ar[rr]^{(f_{\psi})_{\psi\in G}}&&
M\otimes_A{}^{\varphi}N^{\varphi}}
\end{displaymath}
where $f_{\varphi^{\mone}}$ is given by \eqref{eq2} and the remaining components are zero.
\end{proof}

\begin{remark}\label{rem3.5}
{\em
Under the isomorphism provided by Lemma~\ref{lem3}, the morphism $f\otimes_{\mathcal{X}}g$
in $\mathrm{Hom}_{\mathcal{X}}(M\otimes_{\mathcal{X}}N,M'\otimes_{\mathcal{X}}N')$ has components of the form
\begin{displaymath}
{}^{\gamma}(f_{\alpha\gamma^{\mone}})^{\gamma}\otimes g_{\beta}:
{}^{\gamma}M^{\gamma^{}}\otimes_A N\to {}^{\alpha}(M')^{\alpha}\otimes_A {}^{\beta}(N')^{\beta}.
\end{displaymath}
}
\end{remark}

\begin{lemma}\label{lem2}
{\hspace{1mm}}

\begin{enumerate}[$($i$)$]
\item\label{lem2.1}
The operation $\otimes_{\mathcal{X}}$ is bifunctorial.
\item\label{lem2.2}
If $e$ and $f$ are idempotents in $\mathcal{X}$, then so is $e\otimes_{\mathcal{X}} f$.
Hence $\otimes_{\mathcal{X}}$ extends to a bifunctor
$\otimes_{\tilde{\mathcal{X}}}:\tilde{\mathcal{X}}\times \tilde{\mathcal{X}}\to \tilde{\mathcal{X}}$
given by $(M,e)\otimes_{\tilde{\mathcal{X}}}(N,f)=(M\otimes_{\mathcal{X}}N,e\otimes_{\mathcal{X}} f)$.
\end{enumerate}
\end{lemma}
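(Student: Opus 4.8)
The plan is to establish the bifunctoriality in (i) by a direct computation on components and then to read off (ii) as a purely formal consequence. Bifunctoriality of $\otimes_{\mathcal{X}}$ means two things: that identities are preserved, $\mathrm{id}_M\otimes_{\mathcal{X}}\mathrm{id}_N=\mathrm{id}_{M\otimes_{\mathcal{X}}N}$, and that the interchange law
\[
(f'\circ f)\otimes_{\mathcal{X}}(g'\circ g)=(f'\otimes_{\mathcal{X}}g')\circ(f\otimes_{\mathcal{X}}g)
\]
holds for all composable $f,f'$ and $g,g'$ in $\mathcal{X}$. Conceptually, both statements are forced by the fact that each twist $M\mapsto{}^{\varphi}M^{\varphi}$ is a monoidal autoequivalence of the category of $A$-$A$-bimodules under $\otimes_A$, that these twists compose according to the abelian group law of $G$, and that $\otimes_{\mathcal{X}}$ is built out of $\otimes_A$ and this monoidal $G$-action; I would nevertheless verify them on components, since the composition rule \eqref{eq1} and the formula for $f\otimes_{\mathcal{X}}g$ are already presented in that form.

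For the identity axiom, I would use that $\mathrm{id}_M$ has $\varphi$-component $\mathrm{id}_M$ when $\varphi$ is the neutral element of $G$ and $0$ otherwise. Substituting into the definition of $f\otimes_{\mathcal{X}}g$ makes the triple-indexed family collapse onto the diagonal, producing $\mathrm{id}_M\otimes\mathrm{id}_{{}^{\gamma}N^{\gamma}}$ on each summand $M\otimes_A{}^{\gamma}N^{\gamma}$ of $M\otimes_{\mathcal{X}}N$, which is exactly $\mathrm{id}_{M\otimes_{\mathcal{X}}N}$.

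The interchange law is the substantial step. I would compute the $(\alpha,\beta,\gamma)$-component of the left-hand side by feeding \eqref{eq1} into each tensor factor; using that the twist ${}^{\gamma}(-)^{\gamma}$ preserves composition and obeys ${}^{\gamma}({}^{\psi}x^{\psi})^{\gamma}={}^{\gamma\psi}x^{\gamma\psi}$ (where $G$ abelian enters) together with the bifunctoriality of $\otimes_A$, this component becomes
\[
\sum_{\varphi,\psi\in G}\big({}^{\varphi}(f'_{\alpha\varphi^{\mone}})^{\varphi}\otimes{}^{\gamma\psi}(g'_{\beta\gamma^{\mone}\psi^{\mone}})^{\gamma\psi}\big)\circ\big(f_{\varphi}\otimes{}^{\gamma}(g_{\psi})^{\gamma}\big).
\]
For the right-hand side I would regard $f\otimes_{\mathcal{X}}g$ and $f'\otimes_{\mathcal{X}}g'$ as genuine $\mathcal{X}$-morphisms between the internally graded objects $M\otimes_{\mathcal{X}}N$, $M'\otimes_{\mathcal{X}}N'$ and $M''\otimes_{\mathcal{X}}N''$, using the bookkeeping that the $(\alpha,\beta,\gamma)$-component carries the $\gamma$-summand of the source into the $(\alpha^{\mone}\beta)$-summand of the target inside the $\mathcal{X}$-twist index $\sigma=\alpha$, and then compose them via \eqref{eq1}. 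The main obstacle is exactly this reconciliation of indices: one must identify the $\mathcal{X}$-composition index $\mu$ of \eqref{eq1} with $\varphi$, determine the second summation variable $\psi$ from the requirement that the target summand of the first morphism match the source summand of the second after twisting, and then see that the two $G$-indexed families coincide after a change of summation variable. Beyond this index juggling, nothing more than the monoidality of the twists and the bilinearity of $\otimes_A$ is needed.

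Finally, (ii) follows formally. For idempotents $e\in\mathrm{End}_{\mathcal{X}}(M)$ and $f\in\mathrm{End}_{\mathcal{X}}(N)$ the interchange law gives
\[
(e\otimes_{\mathcal{X}}f)\circ(e\otimes_{\mathcal{X}}f)=(e\circ e)\otimes_{\mathcal{X}}(f\circ f)=e\otimes_{\mathcal{X}}f,
\]
so $e\otimes_{\mathcal{X}}f$ is idempotent and $(M,e)\otimes_{\tilde{\mathcal{X}}}(N,f):=(M\otimes_{\mathcal{X}}N,e\otimes_{\mathcal{X}}f)$ is a well-defined object of $\tilde{\mathcal{X}}$. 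On morphisms, writing $\boldsymbol{f}=e'\circ\boldsymbol{f}\circ e$ and $\boldsymbol{g}=f'\circ\boldsymbol{g}\circ f$ as in Remark~\ref{rem001}, two applications of the interchange law yield $\boldsymbol{f}\otimes_{\mathcal{X}}\boldsymbol{g}=(e'\otimes_{\mathcal{X}}f')\circ(\boldsymbol{f}\otimes_{\mathcal{X}}\boldsymbol{g})\circ(e\otimes_{\mathcal{X}}f)$, so $\boldsymbol{f}\otimes_{\mathcal{X}}\boldsymbol{g}$ automatically lands in the sandwiched hom-space that defines morphisms in $\tilde{\mathcal{X}}$; preservation of identities and of composition for the resulting $\otimes_{\tilde{\mathcal{X}}}$ is then inherited from (i).
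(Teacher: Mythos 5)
Your proposal is correct and follows essentially the same route as the paper: both verify the interchange law componentwise, with bifunctoriality of $\otimes_A$ and abelian reindexing over $G$ doing all the work, and then deduce claim (ii) purely formally from that law. The only organizational difference is that the paper first reduces by linearity to morphisms with a single non-zero component (so each side of the interchange identity has exactly one non-zero component and no summation is needed), whereas you keep the full $G$-indexed sums and match them by a change of summation variable; your explicit checks of the identity axiom and of the sandwich condition $\boldsymbol{f}=e'\circ\boldsymbol{f}\circ e$ in $\tilde{\mathcal{X}}$ are details the paper leaves implicit.
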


\begin{proof}
Let $M$, $N$, $K$, $L$, $X$ and $Y$ be objects in $\mathcal{X}$. Let  $f:M\to K$, $g:N\to L$, $h:K\to X$
and $l:L\to Y$ be morphisms in $\mathcal{X}$ with their only non-zero components being $f_{\alpha}$,
$g_{\gamma\varphi^{\mone}}$, $h_{\beta\alpha^{\mone}}$ and $l_{\delta\gamma^{\mone}}$, respectively. Consider the diagram
\begin{displaymath}
\xymatrix{
M\ar[d]_{f_{\alpha}}&\otimes_{A}& {}^{\varphi}N^{\varphi}\ar[d]_{{}^{\varphi}(g_{\gamma\varphi^{\mone}})^{\varphi}}\\
{}^{\alpha}K^{\alpha}\ar[d]_{{}^{\alpha}(h_{\beta\alpha^{\mone}})^{\alpha}}&\otimes_{A}&
{}^{\gamma}L^{\gamma}\ar[d]_{{}^{\gamma}(l_{\delta\gamma^{\mone}})^{\gamma}}\\
{}^{\beta}X^{\beta}&\otimes_{A}& {}^{\delta}Y^{\delta}.
}
\end{displaymath}
Bifunctoriality of $\otimes_{A}$ yields
{\small
\begin{equation}\label{eq3}
\left({}^{\alpha}(h_{\beta\alpha^{\mone}})^{\alpha}\circ f_{\alpha}\right)\otimes
\left({}^{\gamma}(l_{\delta\gamma^{\mone}})^{\gamma}\circ{}^{\varphi}(g_{\gamma\varphi^{\mone}})^{\varphi}\right)=
\left({}^{\alpha}(h_{\beta\alpha^{\mone}})^{\alpha}\otimes{}^{\gamma}(l_{\delta\gamma^{\mone}})^{\gamma}\right)\circ
\left(f_{\alpha}\otimes{}^{\varphi}(g_{\gamma\varphi^{\mone}})^{\varphi}\right).
\end{equation}}
The left hand side and the right hand side of \eqref{eq3} coincide with
\begin{displaymath}
(h\circ f)_{\beta}\otimes {}^{\varphi}(l\circ g)_{\delta\varphi^{\mone}}^{\varphi} \quad\text{ and }\quad
{}^{\alpha}(h\otimes l)^{\alpha}_{\beta\alpha^{\mone}}\circ (f\otimes g)_{\alpha},
\end{displaymath}
respectively. As these are the only non-zero components in $(h\circ f)\otimes_{\mathcal{X}}(l\circ g)$
and $(h\otimes_{\mathcal{X}} l)\circ(f\otimes_{\mathcal{X}} g)$, 
respectively, and have the same source and target, we obtain
\begin{equation}\label{eq4}
(h\circ f)\otimes_{\mathcal{X}}(l\circ g)=(h\otimes_{\mathcal{X}} l)\circ(f\otimes_{\mathcal{X}} g)
\end{equation}
in this case. The general case of equality~\eqref{eq4} follows by linearity, implying Claim~\eqref{lem2.1}.

Claim~\eqref{lem2.2} follows from equality~\eqref{eq4}.
\end{proof}

\begin{proposition}\label{propmultid}
For any $\chi,\zeta\in \hat{G}$, we have
\begin{equation}\label{eq5}
(A,\tilde{\pi}_{\chi})\otimes_{\tilde{\mathcal{X}}}(A,\tilde{\pi}_{\zeta})\cong (A,\tilde{\pi}_{\chi\zeta}).
\end{equation}
\end{proposition}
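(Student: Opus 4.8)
The plan is to reduce the claim to an explicit idempotent computation in $\mathcal{X}$ and then exhibit mutually inverse morphisms in $\tilde{\mathcal{X}}$. First I would use the natural isomorphism $A\otimes_A Y\cong Y$ of $A$-$A$-bimodules, applied to each summand, to identify $A\otimes_{\mathcal{X}}A=\bigoplus_{\gamma\in G}\big(A\otimes_A{}^{\gamma}A^{\gamma}\big)\cong\bigoplus_{\gamma\in G}{}^{\gamma}A^{\gamma}$, writing $V_\gamma:={}^{\gamma}A^{\gamma}$ for the $\gamma$-th summand. Under this identification the left-hand side of \eqref{eq5} becomes $\big(\bigoplus_{\gamma}V_\gamma,E\big)$ with $E:=\tilde\pi_\chi\otimes_{\mathcal{X}}\tilde\pi_\zeta$, and the task is to prove $\big(\bigoplus_{\gamma}V_\gamma,E\big)\cong(A,\tilde\pi_{\chi\zeta})$.

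Next I would compute the block structure of $E$ using the formula for $\otimes_{\mathcal{X}}$ together with the explicit components $(\tilde\pi_\chi)_\alpha=\tfrac{\chi(\alpha)}{|G|}\alpha$ and $(\tilde\pi_\zeta)_\delta=\tfrac{\zeta(\delta)}{|G|}\delta$, where $\alpha$ denotes the bimodule map $x\mapsto\alpha(x)\colon A\to{}^{\alpha}A^{\alpha}$. Transporting each component along the identifications $V_\gamma\cong A\otimes_A{}^{\gamma}A^{\gamma}$ and the isomorphism \eqref{eq2}, I expect the block of $E$ from $V_\gamma$ to $V_{\gamma'}$ to have $\sigma$-component equal to $\tfrac{\chi(\sigma)\,\zeta(\sigma\gamma'\gamma^{\mone})}{|G|^{2}}$ times the standard map induced by $\sigma\gamma'\gamma^{\mone}$. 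The decisive special case is the diagonal block from $V_e$ to $V_e$, which should come out to be exactly $\tfrac{1}{|G|}\tilde\pi_{\chi\zeta}$ in $\mathrm{End}_{\mathcal{X}}(A)$.

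With this in hand I would let $\iota_e\colon V_e=A\to\bigoplus_\gamma V_\gamma$ and $p_e\colon\bigoplus_\gamma V_\gamma\to V_e=A$ be the canonical inclusion and projection, set $q_e:=\iota_e\circ p_e$, and define $\phi:=|G|\,p_e\circ E$ and $\psi:=E\circ\iota_e$. Heeding Remark~\ref{rem001}, I would first check that these are genuine morphisms $\big(\bigoplus_\gamma V_\gamma,E\big)\to(A,\tilde\pi_{\chi\zeta})$ and back, i.e. that $\tilde\pi_{\chi\zeta}\circ\phi=\phi$ and $\psi\circ\tilde\pi_{\chi\zeta}=\psi$; both follow from a direct convolution computation with \eqref{eq1}. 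Since $E$ is idempotent, the composite $\phi\circ\psi=|G|\,(p_e\circ E\circ\iota_e)$ equals $|G|$ times the diagonal $(e,e)$-block of $E$, hence $\tilde\pi_{\chi\zeta}=\mathrm{id}_{(A,\tilde\pi_{\chi\zeta})}$ by \eqref{eq00}. For the other composite $\psi\circ\phi=|G|\,E\circ q_e\circ E$, the key identity to establish is the normalization $E\circ q_e\circ E=\tfrac{1}{|G|}E$; granting it, $\psi\circ\phi=E=\mathrm{id}_{(\bigoplus_\gamma V_\gamma,E)}$, so $\phi$ and $\psi$ are mutually inverse and \eqref{eq5} follows.

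The main obstacle is the bookkeeping underlying the block formula for $E$ and, above all, the normalization $E\circ q_e\circ E=\tfrac{1}{|G|}E$. Both rest on the twisted composition \eqref{eq1}, whose components mix the running indices through the translates ${}^{\varphi}(\,\cdot\,)^{\varphi}$, and one must keep careful track of which scalars and which automorphisms occur in each slot. The computation is made tractable precisely because $G$ is abelian: conjugation by $\varphi$ fixes every group element, so the composite map appearing in each slot of $E\circ q_e\circ E$ is independent of the summation index $\varphi$, the characters combine multiplicatively via $\chi(\varphi)\chi(\rho\varphi^{\mone})=\chi(\rho)$ and likewise for $\zeta$, and the resulting sum over $\varphi\in G$ contributes the single factor $|G|$ that cancels one power of $\tfrac{1}{|G|}$. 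This is exactly what turns $E\circ q_e\circ E$ into $\tfrac{1}{|G|}E$ and makes the two composites reduce to the required identity morphisms.
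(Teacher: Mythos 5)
Your proposal is correct and is essentially the paper's own proof in different packaging: your $\psi=E\circ\iota_e$ and $\phi=|G|\,p_e\circ E$ have exactly the components $\tfrac{1}{|G|^2}\chi(\sigma)\zeta(\tau)$ and $\tfrac{1}{|G|}\chi(\beta)\zeta(\beta\alpha^{\mone})$ of the morphisms $\boldsymbol{f}$ and $\boldsymbol{g}$ constructed in the paper, and your normalization $E\circ q_e\circ E=\tfrac{1}{|G|}E$ is the same character-convolution computation by which the paper shows that $\boldsymbol{g}\circ\boldsymbol{f}$ and $\boldsymbol{f}\circ\boldsymbol{g}$ are the respective idempotents. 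The only difference is organizational: deriving the inverse pair from the block structure of the idempotent $E$ makes the compatibility conditions of Remark~\ref{rem001} follow from $E^2=E$ together with your two key identities, whereas the paper verifies them by separate commutative-diagram computations.
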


\begin{proof}
We start by constructing a morphism $\boldsymbol{f}$ from the right hand side of \eqref{eq5} to the left hand side, which is
defined as follows:
\begin{displaymath}
\boldsymbol{f}:=(f_{\sigma,\tau})_{\sigma,\tau\in G}\colon A\to
\displaystyle\bigoplus_{\sigma,\tau\in G}{}^{\sigma}A^{\sigma}\otimes_A {}^{\tau}A^{\tau}
\end{displaymath}
where $f_{\sigma,\tau}$ is given by
\begin{equation}\label{eq6}
1 \mapsto \frac{1}{|G|^2}\chi(\sigma)\zeta(\tau)(1\otimes 1)\in {}^{\sigma}A^{\sigma}\otimes_A {}^{\tau}A^{\tau}.
\end{equation}

Consider the diagram
\begin{displaymath}
\xymatrix{
A\ar[rrr]^{\tilde{\pi}_{\chi\zeta}}\ar[d]_{(f_{\sigma,\tau})_{\sigma,\tau\in G}}
&&&\displaystyle\bigoplus_{\beta\in G}{}^{\beta}A^{\beta}
\ar[d]^{\big({}^{\beta}(f_{\alpha\beta^{\mone},\delta\beta^{\mone}})_{\alpha\beta^{\mone},\delta\beta^{\mone}\in G}^{\beta}\big)_{\beta\in G}}\\
\displaystyle\bigoplus_{\sigma,\tau\in G}{}^{\sigma}A^{\sigma}\otimes_A {}^{\tau}A^{\tau}
\ar[rrr]^{({}^{\sigma}(\tilde{\pi}_{\chi}\otimes_{\mathcal{X}}\tilde{\pi}_{\zeta})^{\sigma})_{\sigma\in G}}
&&&\displaystyle
\bigoplus_{\alpha,\delta\in G}
{}^{\alpha}A^{\alpha}\otimes_A {}^{\delta}A^{\delta}.
}
\end{displaymath}
By definition,  the $\tau\sigma^{\mone},\alpha\sigma^{\mone},\delta\sigma^{\mone}$-component of
$\tilde{\pi}_{\chi}\otimes_{\mathcal{X}}\tilde{\pi}_{\zeta}$ is multiplication by the scalar
\begin{equation}\label{eq7}
\frac{1}{|G|^2} \chi(\alpha\sigma^{\mone})\zeta(\delta\tau^{\mone})=
\frac{1}{|G|^2} \chi(\alpha)\chi(\sigma^{\mone})\zeta(\delta)\zeta(\tau^{\mone}).
\end{equation}

Now we compute the components of the two compositions $(\tilde{\pi}_{\chi}\otimes_{\mathcal{X}}\tilde{\pi}_{\zeta})\circ \boldsymbol{f}$
(corresponding to the path going down and then right) and
$\boldsymbol{f}\circ \tilde{\pi}_{\chi\zeta}$ (corresponding to the path going right and then down)
in our diagram which end up in a specific ${}^{\alpha}A^{\alpha}\otimes_A {}^{\delta}A^{\delta}$.
One way around,
using \eqref{eq6} and \eqref{eq7}, we obtain that $1$ is sent to
\begin{displaymath}
\begin{array}{rcl}
\frac{1}{|G|^4}\displaystyle\sum_{\sigma,\tau\in G} \chi(\sigma)\chi(\alpha)\chi(\sigma^{\mone})\zeta(\tau)\zeta(\delta)\zeta(\tau^{\mone}) (1\otimes 1)
&=&\frac{|G|^2}{|G|^4}\chi(\alpha)\zeta(\delta) (1\otimes 1)\\
&=&\frac{1}{|G|^2}\chi(\alpha)\zeta(\delta) (1\otimes 1).
\end{array}
\end{displaymath}
The other way around, using \eqref{eq6} we obtain that $1$ is sent to
\begin{displaymath}
\begin{array}{rcl}
\frac{1}{|G|^3}\displaystyle\sum_{\beta\in G} \chi\zeta(\beta)\chi(\alpha\beta^{\mone})\zeta(\delta\beta^{\mone})(1\otimes 1)
&=&\frac{1}{|G|^2}\chi(\alpha)\zeta(\delta) (1\otimes 1).
\end{array}
\end{displaymath}
Hence the diagram commutes and, moreover, \eqref{eq00} is satisfied.
Thus $\boldsymbol{f}$ represents a morphism from the right hand side of \eqref{eq5} to the left hand side.

We proceed by constructing a morphism $\boldsymbol{g}$ from the left hand side of \eqref{eq5} to the right hand side.
Consider the diagram
\begin{displaymath}
\xymatrix{
\displaystyle\bigoplus_{\alpha\in G}(A\otimes_A {}^{\alpha}A^{\alpha})
\ar[rrr]^{\tilde{\pi}_{\chi}\otimes_{\mathcal{X}}\tilde{\pi}_{\zeta}}
\ar[d]|-{\frac{1}{|G|}\big(\chi(\beta)\zeta(\beta\alpha^{\mone})\big)_{\alpha,\beta\in G}}
&&&
\displaystyle\bigoplus_{\gamma,\delta\in G}{}^{\gamma}A^{\gamma}\otimes_A{}^{\delta}A^{\delta}
\ar[d]|-{\frac{1}{|G|}\big({}^{\gamma}(\chi(\sigma\gamma^{\mone})\zeta(\sigma\delta^{\mone}))_{\delta\gamma^{\mone},\sigma\gamma^{\mone}\in G}^{\gamma}\big)_{\gamma\in G}}\\
\displaystyle\bigoplus_{\beta\in G} {}^{\beta}A^{\beta}
\ar[rrr]^{({}^{\beta}\tilde{\pi}_{\chi\zeta}^{\beta})_{\beta\in G}}
&&&
\displaystyle\bigoplus_{\sigma\in G}{}^{\sigma}A^{\sigma}
}
\end{displaymath}
whose vertical part defines $\boldsymbol{g}$.
For fixed $\alpha,\sigma\in G$, going one way around, using \eqref{eq7} we obtain
\begin{displaymath}
\sum_{\gamma,\delta\in G}\frac{1}{|G|^3} \chi(\gamma)\zeta(\delta\alpha^{\mone})\chi(\sigma\gamma^{\mone})\zeta(\sigma\delta^{\mone})=\frac{1}{|G|}
\chi(\sigma)\zeta(\sigma\alpha^{\mone}).
\end{displaymath}
The other way around yields
\begin{displaymath}
\frac{1}{|G|^2}\sum_{\beta\in G}\chi(\beta)\zeta(\beta\alpha^{\mone})\chi\zeta(\sigma\beta^{\mone})=
\frac{1}{|G|}\chi(\sigma)\zeta(\sigma\alpha^{\mone})
\end{displaymath}
which implies that the diagram commutes and, moreover, $\boldsymbol{g}\circ (\tilde{\pi}_{\chi}\otimes_{\mathcal{X}}\tilde{\pi}_{\zeta})=\tilde{\pi}_{\chi\zeta}\circ \boldsymbol{g}=\boldsymbol{g}$.

Now we claim that both compositions $\boldsymbol{f}\circ\boldsymbol{g}$ and $\boldsymbol{g}\circ\boldsymbol{f}$
are the identities, i.e. of the respective idempotents. The $\varphi$-component of the
composition $\boldsymbol{g}\circ\boldsymbol{f}$ sends $1$ to
\begin{displaymath}
\frac{1}{|G|^3}\sum_{\sigma,\tau\in G}\chi(\sigma)\zeta(\tau)\chi(\varphi\sigma^{\mone})\zeta(\varphi\tau^{\mone})=
\frac{1}{|G|}\chi\zeta(\varphi).
\end{displaymath}
The $\alpha,\sigma,\tau$-component of the composition $\boldsymbol{f}\circ\boldsymbol{g}$ sends $1\otimes 1$ to
\begin{displaymath}
\frac{1}{|G|^3}\sum_{\beta\in G}\chi(\beta)\zeta(\beta\alpha^{\mone})\chi(\sigma\beta^{\mone})\zeta(\tau\beta^{\mone})(1\otimes 1)=
\frac{1}{|G|^2}\chi(\sigma)\zeta(\tau\alpha^{\mone})(1\otimes 1).
\end{displaymath}
The claim follows.
\end{proof}

\begin{proposition}\label{propmultid2}
Let $i,j\in\{1,2,\dots,k\}$ and $M=Ae_i\otimes_{\Bbbk}e_jA$.
Let further $\chi\in \hat{G}_M$ and $\zeta\in \hat{G}$. Then
\begin{equation}\label{eq8}
(M,\varepsilon_{\chi})\otimes_{\tilde{\mathcal{X}}}(A,\tilde{\pi}_{\zeta})\cong (M,\varepsilon_{\chi\zeta}).
\end{equation}
\end{proposition}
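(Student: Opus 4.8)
The plan is to follow the template of the proof of Proposition~\ref{propmultid}: produce two mutually inverse morphisms in $\tilde{\mathcal{X}}$ between the two sides of \eqref{eq8}. The observation that makes the projective bimodule $M$ tractable is that the \emph{diagonal} twist of the regular bimodule is trivial: the map $x\mapsto\varphi^{-1}(x)$ is a bimodule isomorphism ${}^{\varphi}A^{\varphi}\cong A$, so for every $\varphi\in G$ one has a bimodule isomorphism
\[
M\otimes_A {}^{\varphi}A^{\varphi}\;\cong\;M,\qquad m\otimes x\mapsto m\,\varphi^{-1}(x).
\]
Hence $M\otimes_{\mathcal{X}}A=\bigoplus_{\varphi\in G}\big(M\otimes_A {}^{\varphi}A^{\varphi}\big)\cong M^{\oplus|G|}$, with summands indexed by $\varphi\in G$. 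Since $\mathrm{End}_{A\text{-}A}(M)\cong e_iAe_i\otimes_{\Bbbk}e_jAe_j$ is local, $M$ is an indecomposable bimodule, and by Lemma~\ref{lem1}\eqref{lem1.2} every indecomposable summand of the left-hand side of \eqref{eq8} in $\tilde{\mathcal{X}}$ is of the form $(M,\varepsilon_{\chi'})$ for some $\chi'\in\hat{G}_M$. It thus remains to exhibit an explicit isomorphism onto the single object $(M,\varepsilon_{\chi\zeta})$.

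First I would write down the idempotent $\varepsilon_{\chi}\otimes_{\mathcal{X}}\tilde{\pi}_{\zeta}$ on $M\otimes_{\mathcal{X}}A$ explicitly. Using Remark~\ref{rem3.5}, its components are assembled from the components $\tfrac{\chi(\alpha)}{|G_M|}\alpha$ of $\varepsilon_{\chi}$ (nonzero only for $\alpha\in G_M$) and $\tfrac{\zeta(\beta)}{|G|}\beta$ of $\tilde{\pi}_{\zeta}$; transporting through the isomorphisms $M\otimes_A {}^{\varphi}A^{\varphi}\cong M$ above rewrites it as an explicit idempotent endomorphism of $M^{\oplus|G|}$ in terms of the chosen isomorphisms $\alpha\colon M\to{}^{\alpha}M^{\alpha}$ (for $\alpha\in G_M$) and the characters $\chi,\zeta$. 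In parallel I would record $\varepsilon_{\chi\zeta}$ on the single copy $M$, whose $\alpha$-component is $\tfrac{\chi(\alpha)\zeta(\alpha)}{|G_M|}\alpha$ for $\alpha\in G_M$ and zero otherwise.

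Next, mirroring Proposition~\ref{propmultid}, I would define $\boldsymbol{f}\colon (M,\varepsilon_{\chi\zeta})\to (M\otimes_{\mathcal{X}}A,\varepsilon_{\chi}\otimes_{\mathcal{X}}\tilde{\pi}_{\zeta})$ as a tuple $(f_{\rho,\tau})$ sending the canonical generator $e_i\otimes e_j$ of $M$ into the $\tau$-th summand via the chosen isomorphism, with a scalar which is a suitably normalized multiple of $\chi(\rho)\zeta(\tau)$ (and which is forced to vanish unless $\rho\in G_M$), and a morphism $\boldsymbol{g}$ in the opposite direction defined symmetrically. As explained in Remark~\ref{rem001}, pre- and post-composing with the relevant idempotents, one checks the analogues of the commuting square from Proposition~\ref{propmultid}, namely $(\varepsilon_{\chi}\otimes_{\mathcal{X}}\tilde{\pi}_{\zeta})\circ\boldsymbol{f}=\boldsymbol{f}=\boldsymbol{f}\circ\varepsilon_{\chi\zeta}$ and dually for $\boldsymbol{g}$, so that both genuinely represent morphisms in $\tilde{\mathcal{X}}$ by \eqref{eq00}. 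Finally, exactly as in the two displayed character sums at the end of the proof of Proposition~\ref{propmultid}, I would verify $\boldsymbol{g}\circ\boldsymbol{f}=\varepsilon_{\chi\zeta}=\mathrm{id}_{(M,\varepsilon_{\chi\zeta})}$ and $\boldsymbol{f}\circ\boldsymbol{g}=\varepsilon_{\chi}\otimes_{\mathcal{X}}\tilde{\pi}_{\zeta}$, each reducing to orthogonality of characters.

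The main obstacle is the bookkeeping forced by the mismatch of supports: $\varepsilon_{\chi}$ and $\chi$ live on the possibly proper subgroup $G_M$, whereas $\tilde{\pi}_{\zeta}$ and $\zeta$ live on all of $G$, and the composition law \eqref{eq1} together with the tensor rule of Remark~\ref{rem3.5} mixes these two index sets. One must check that the components indexed by elements outside $G_M$ drop out, using that for $\rho\notin G_M$ there is no isomorphism $M\cong{}^{\rho}M^{\rho}$ (so such components land in the radical and are annihilated by the idempotents, by Lemma~\ref{lem1}\eqref{lem1.1}), while the surviving sums collapse by orthogonality of $\chi$ over $G_M$ and of $\zeta$ over $G$. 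Arranging the normalizations $\tfrac{1}{|G_M|}$ and $\tfrac{1}{|G|}$ and the restriction $\hat{G}\to\hat{G}_M$ to combine into precisely the character $\alpha\mapsto\chi(\alpha)\zeta(\alpha)$ on $G_M$, that is $\chi\zeta\in\hat{G}_M$, is where the computation has to be carried out with care.
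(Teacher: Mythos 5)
Your proposal is correct and follows essentially the same route as the paper: both mimic Proposition~\ref{propmultid} by writing down explicit morphisms $\boldsymbol{f}$ and $\boldsymbol{g}$ whose components are character-valued scalars supported on $G_M$ (in the paper, $f_{\sigma,\tau}(e_i\otimes e_j)=\tfrac{1}{|G_M||G|}\chi(\sigma)\zeta(\tau)\,(\sigma(e_i)\otimes\sigma(e_j)\otimes 1)$ for $\sigma\in G_M$ and zero otherwise), then verifying \eqref{eq00} and that the two composites equal the respective idempotents by direct character manipulations. Your preliminary collapse $M\otimes_{\mathcal{X}}A\cong M^{\oplus|G|}$ and the Krull--Schmidt remark are harmless extras that the paper does not use (it keeps the factors ${}^{\sigma}M^{\sigma}\otimes_A{}^{\tau}A^{\tau}$ explicit throughout), and the vanishing of components off $G_M$ is built directly into the definitions of $\varepsilon_{\chi}$, $\boldsymbol{f}$ and $\boldsymbol{g}$ rather than coming from idempotents annihilating radical terms, as your parenthetical gloss suggests.
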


\begin{proof}
We follow the proof of Proposition~\ref{propmultid}.
We start by constructing a morphism $\boldsymbol{f}$ from the right hand side of \eqref{eq8} to the left hand side.
Consider the morphism
\begin{displaymath}
\boldsymbol{f}:=(f_{\sigma,\tau})_{\sigma,\tau\in G}\colon M\to
\displaystyle\bigoplus_{\sigma,\tau\in G}{}^{\sigma}M^{\sigma}\otimes_A {}^{\tau}A^{\tau}
\end{displaymath}
where $f_{\sigma,\tau}$ is given by
\begin{equation*}
e_i\otimes e_j \mapsto \frac{1}{|G_M||G|}\chi(\sigma)\zeta(\tau)(\sigma(e_i)\otimes \sigma(e_j)\otimes 1)\in
{}^{\sigma}M^{\sigma}\otimes_A {}^{\tau}A^{\tau}.
\end{equation*}
if $\sigma\in G_M$ and zero otherwise.
Consider the diagram
\begin{displaymath}
\xymatrix{
M\ar[rrr]^{\varepsilon_{\chi\zeta}}\ar[d]_{(f_{\sigma,\tau})_{\sigma,\tau\in G}}
&&&\displaystyle\bigoplus_{\beta\in G}{}^{\beta}M^{\beta}
\ar[d]^{({}^{\beta}(f_{\alpha\beta^{\mone},\delta\beta^{\mone}})_{\alpha\beta^{\mone},\delta\beta^{\mone}\in G}^{\beta})_{\beta\in G}}\\
\displaystyle\bigoplus_{\sigma,\tau\in G}{}^{\sigma}M^{\sigma}\otimes_A {}^{\tau}A^{\tau}
\ar[rrr]^{({}^{\sigma}({\varepsilon}_{\chi}\otimes_{\mathcal{X}}\tilde{\pi}_{\zeta})^{\sigma})_{\sigma\in G}}
&&&\displaystyle
\bigoplus_{\alpha,\delta\in G}
{}^{\alpha}M^{\alpha}\otimes_A {}^{\delta}A^{\delta}.
}
\end{displaymath}
By definition,  the $\tau\sigma^{\mone},\alpha\sigma^{\mone},\delta\sigma^{\mone}$-component of
$\varepsilon_{\chi}\otimes_{\mathcal{X}}\tilde{\pi}_{\zeta}$
sends $e_i\otimes e_j\otimes 1$ to
\begin{displaymath}
\frac{1}{|G_M||G|}\chi(\alpha)\chi(\sigma^{\mone})\zeta(\delta)\zeta(\tau^{\mone})
(\alpha\sigma^{\mone}(e_i)\otimes\alpha\sigma^{\mone}(e_j)\otimes 1)
\end{displaymath}
if $\alpha\sigma^{\mone}\in G_M$ and zero otherwise.
Now, going to the right and then down, the $\alpha,\delta$-component of the composition
$\boldsymbol{f}\circ \varepsilon_{\chi\zeta}$
sends $e_i\otimes e_j$ to
\begin{displaymath}
\frac{1}{|G_M||G|}\chi(\alpha)\zeta(\delta)(\alpha(e_i)\otimes \alpha(e_j)\otimes 1),
\end{displaymath}
if $\alpha$ is in $G_M$, and zero otherwise. Going down and then to the right,
the $\alpha,\delta$-component of
$(\varepsilon_{\chi}\otimes_{\mathcal{X}}\tilde{\pi}_{\zeta})\circ \boldsymbol{f}$
gives the same result, which also equals $f_{\alpha,\delta}$.

To construct a morphism $\boldsymbol{g}$ from the left hand side of \eqref{eq8} to the right hand side,
consider the diagram
\begin{displaymath}
\xymatrix{\displaystyle\bigoplus_{\alpha\in G}(M\otimes_A {}^{\alpha}A^{\alpha})
\ar[rrr]^{\varepsilon_{\chi}\otimes_{\mathcal{X}}\tilde{\pi}_{\zeta}}
\ar[d]_{(g_{\beta,\alpha})_{\alpha,\beta\in G}}
&&&
\displaystyle\bigoplus_{\gamma,\delta\in G}{}^{\gamma}M^{\gamma}\otimes_A{}^{\delta}A^{\delta}
\ar[d]^{\big({}^{\gamma}(g_{\sigma\gamma^{\mone},\delta\gamma^{\mone}})_{\delta\gamma^{\mone},\sigma\gamma^{\mone}\in G}^{\gamma}\big)_{\delta\in G}}
\\
\displaystyle\bigoplus_{\beta\in G} {}^{\beta}M^{\beta}
\ar[rrr]^{({}^{\beta}\varepsilon_{\chi\zeta}^{\beta})_{\beta\in G}}
&&&
\displaystyle\bigoplus_{\sigma\in G}{}^{\sigma}M^{\sigma}.
}
\end{displaymath}
where $g_{\beta,\alpha}$ sends $e_i\otimes e_j\otimes 1\in M\otimes_A {}^{\alpha}A^{\alpha}$
to $\frac{1}{|G_M|}\chi(\beta)\zeta(\beta\alpha^{\mone})(\beta(e_i)\otimes\beta(e_j))\in {}^{\beta}M^{\beta}$,
if $\beta\in G_M$, and to zero otherwise.

For fixed $\alpha,\sigma\in G$, going one way around we obtain the map which sends
$e_i\otimes e_j\otimes 1$ to
\begin{displaymath}
\frac{1}{|G_M|}
\chi(\sigma)\zeta(\sigma\alpha^{\mone})(\sigma(e_i)\otimes
\sigma(e_j)),
\end{displaymath}
if $\sigma\in G_M$, and to zero otherwise, which coincides with $g_{\sigma,\alpha}$.
The other way around gives the same result.

Checking that both compositions $\boldsymbol{f}\circ\boldsymbol{g}$ and $\boldsymbol{g}\circ\boldsymbol{f}$
are the respective idempotents is similar to the proof of Proposition~\ref{propmultid}.
\end{proof}

\subsection{Tensoring symmetric bimodules with $A$-modules}\label{s3.2}

\begin{proposition}\label{prop4}
{\hspace{1mm}}

\begin{enumerate}[$($i$)$]
\item\label{prop4.1}
There is a bifunctor $\otimes^{(r)}:\mathrm{mod}\text{-}A\times \mathcal{X}\to \mathrm{mod}\text{-}A$
defined by
\begin{displaymath}
\begin{array}{rcl}
(V,M)&\mapsto& \displaystyle V\otimes^{(r)}M:= \bigoplus_{\varphi\in G}V\otimes_A {}^{\varphi}M^{\varphi}\\
(f,g)&\mapsto& f\otimes^{(r)} g:=\big(f\otimes {}^{\varphi}(g_{\alpha\varphi^{\mone}})^{\varphi}\big)_{\varphi,\alpha\in G}.
\end{array}
\end{displaymath}
\item\label{prop4.2}
There is a bifunctor $\otimes^{(l)}:\mathcal{X}\times A\text{-}\mathrm{mod}\to A\text{-}\mathrm{mod}$
defined by
\begin{displaymath}
\begin{array}{rcl}
(M,V)&\mapsto& \displaystyle M\otimes^{(l)}V:= \bigoplus_{\varphi\in G}{}^{\varphi}M^{\varphi}\otimes_A V\\
(g,f)&\mapsto& g\otimes^{(l)} f:=\big({}^{\varphi}(g_{\alpha\varphi^{\mone}})^{\varphi}\otimes f\big)_{\alpha,\varphi\in G}.
\end{array}
\end{displaymath}
\end{enumerate}
\end{proposition}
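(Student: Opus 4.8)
The plan is to mirror the proof of Lemma~\ref{lem2}, replacing one of the two bimodule factors by a genuine one-sided module carrying no $G$-action. I treat part~\eqref{prop4.1} in detail, as part~\eqref{prop4.2} is entirely analogous. First I would check that the assignment on objects lands in $\mathrm{mod}\text{-}A$: for $V\in\mathrm{mod}\text{-}A$ and $M\in\mathcal{X}$, each summand $V\otimes_A{}^{\varphi}M^{\varphi}$ carries a right $A$-action through the twisted action $m\cdot b=m\varphi(b)$ on the second tensor factor, so the direct sum $V\otimes^{(r)}M$ is again a right $A$-module (finitely generated, since $G$ is finite). For a morphism $(f,g)$ with $f\colon V\to V'$ in $\mathrm{mod}\text{-}A$ and $g=(g_{\alpha})_{\alpha\in G}\colon M\to M'$ in $\mathcal{X}$, I would observe that the proposed $(\varphi,\alpha)$-component
\[
f\otimes{}^{\varphi}(g_{\alpha\varphi^{\mone}})^{\varphi}\colon V\otimes_A{}^{\varphi}M^{\varphi}\to V'\otimes_A{}^{\alpha}(M')^{\alpha}
\]
is well defined: since $G$ is abelian, ${}^{\varphi}(g_{\alpha\varphi^{\mone}})^{\varphi}$ is a bimodule map ${}^{\varphi}M^{\varphi}\to{}^{\varphi\alpha\varphi^{\mone}}(M')^{\varphi\alpha\varphi^{\mone}}={}^{\alpha}(M')^{\alpha}$, hence in particular a left $A$-module map, so its tensor product over $A$ with the right $A$-module map $f$ is defined and is itself a right $A$-module map for the twisted actions. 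Assembling these entries into a matrix yields a bona fide morphism $f\otimes^{(r)}g\colon V\otimes^{(r)}M\to V'\otimes^{(r)}M'$ of right $A$-modules.

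The substance of the statement is functoriality, namely that $\otimes^{(r)}$ is a functor out of the product category $\mathrm{mod}\text{-}A\times\mathcal{X}$. Preservation of identities I would settle first: since $\mathrm{id}_M$ in $\mathcal{X}$ is the tuple supported only at the neutral element of $G$, the $(\varphi,\alpha)$-component of $\mathrm{id}_V\otimes^{(r)}\mathrm{id}_M$ vanishes unless $\alpha\varphi^{\mone}=e$, in which case it equals $\mathrm{id}_V\otimes\mathrm{id}_{{}^{\varphi}M^{\varphi}}$, so the associated matrix is the identity. For composition I would verify the interchange law
\[
(f'\circ f)\otimes^{(r)}(g'\circ g)=(f'\otimes^{(r)}g')\circ(f\otimes^{(r)}g)
\]
for composable $f,f'$ in $\mathrm{mod}\text{-}A$ and $g,g'$ in $\mathcal{X}$. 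Since composition on the $\mathcal{X}$-side is governed by \eqref{eq1}, the computation runs exactly as in \eqref{eq3}--\eqref{eq4}: fixing source index $\varphi$ and target index $\beta$, one sums over the intermediate index and applies bifunctoriality of $\otimes_A$ summand by summand, the only difference being that the $\mathrm{mod}\text{-}A$-variable contributes the single untwisted composite $f'\circ f$ instead of a $G$-indexed family. Extending by $\Bbbk$-linearity then gives the identity for arbitrary $g,g'$, completing part~\eqref{prop4.1}.

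The one point genuinely demanding care, which I regard as the main obstacle, is the index bookkeeping in the interchange law: one must track how the conjugation twists ${}^{\varphi}(-)^{\varphi}$ interact with the relabelling $\alpha\mapsto\alpha\varphi^{\mone}$ across $\otimes_A$, repeatedly invoking commutativity of $G$ to identify ${}^{\varphi\alpha\varphi^{\mone}}(-)^{\varphi\alpha\varphi^{\mone}}$ with ${}^{\alpha}(-)^{\alpha}$. Once the indices are matched to the pattern of Lemma~\ref{lem2}, everything reduces to the bifunctoriality of $\otimes_A$. For part~\eqref{prop4.2} I would run the same argument with the module placed in the right tensor slot, so that the conjugation twist sits on the left bimodule factor; the corresponding component maps ${}^{\varphi}(g_{\alpha\varphi^{\mone}})^{\varphi}\otimes f$ and their composition law are the mirror image of the above, in line with the reformulation of $\otimes_{\mathcal{X}}$ recorded in Remark~\ref{rem3.5}.
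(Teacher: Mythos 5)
Your proposal is correct and takes essentially the same route as the paper: the paper's entire proof consists of the observation that Lemma~\ref{lem3} is needed for the formulation of part~(ii) and that both claims are proved "similarly to Lemma~\ref{lem2}\eqref{lem2.1}", which is exactly the argument you spell out (well-definedness of the twisted component maps, identities, and the interchange law via bifunctoriality of $\otimes_A$, with the mirrored indexing of Remark~\ref{rem3.5} for part~(ii)). Your write-up simply supplies the details the paper leaves implicit.
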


\begin{proof}
We note that we use Lemma~\ref{lem3} for the formulation of Claim~\eqref{prop4.2}. The proof of
both claims is similar to the proof of Lemma~\ref{lem2}\eqref{lem2.1}.
\end{proof}

\begin{proposition}\label{prop5}
{\hspace{1mm}}

\begin{enumerate}[$($i$)$]
\item\label{prop5.1}
The bifunctor $\otimes^{(r)}$ induces
a bifunctor $\mathrm{mod}\text{-}A\times \tilde{\mathcal{X}}\to \mathrm{mod}\text{-}A$
(which we will denote by the same symbol abusing notation).
\item\label{prop5.2}
The  bifunctor $\otimes^{(l)}$ induces
a bifunctor $\tilde{\mathcal{X}}\times A\text{-}\mathrm{mod}\to A\text{-}\mathrm{mod}$
(which we will denote by the same symbol abusing notation).
\end{enumerate}
\end{proposition}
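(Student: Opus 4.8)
The plan is to exploit the fact that the target categories $\mathrm{mod}\text{-}A$ and $A\text{-}\mathrm{mod}$ are abelian, hence idempotent complete. Consequently the bifunctors $\otimes^{(r)}$ and $\otimes^{(l)}$ of Proposition~\ref{prop4}, which already have idempotent-complete targets, extend along the idempotent completion $\mathcal{X}\hookrightarrow\tilde{\mathcal{X}}$ in the second (respectively first) variable. Conceptually this is the universal property of the Karoubi envelope: currying turns $\otimes^{(r)}$ into a functor $\mathcal{X}\to\mathrm{Fun}(\mathrm{mod}\text{-}A,\mathrm{mod}\text{-}A)$, the latter category being idempotent complete because idempotent natural transformations split pointwise. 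I will, however, give the explicit construction in the style of the present paper, and I spell out only Claim~\eqref{prop5.1}; Claim~\eqref{prop5.2} is entirely analogous, replacing $\otimes^{(r)}$ by $\otimes^{(l)}$ and $\mathrm{mod}\text{-}A$ by $A\text{-}\mathrm{mod}$.

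First I would define the extended bifunctor on objects. Given $V\in\mathrm{mod}\text{-}A$ and $(M,e)\in\tilde{\mathcal{X}}$, the morphism $\mathrm{id}_V\otimes^{(r)}e$ is an endomorphism of $V\otimes^{(r)}M$ in $\mathrm{mod}\text{-}A$. The interchange law coming from bifunctoriality of $\otimes^{(r)}$ (Proposition~\ref{prop4}) gives $(\mathrm{id}_V\otimes^{(r)}e)\circ(\mathrm{id}_V\otimes^{(r)}e)=(\mathrm{id}_V\circ\mathrm{id}_V)\otimes^{(r)}(e\circ e)=\mathrm{id}_V\otimes^{(r)}e$, so this endomorphism is idempotent. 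As $\mathrm{mod}\text{-}A$ is idempotent complete, I then set $V\otimes^{(r)}(M,e):=\mathrm{im}(\mathrm{id}_V\otimes^{(r)}e)$, a direct summand of $V\otimes^{(r)}M$.

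Next I would treat morphisms. By Remark~\ref{rem001} a morphism $(M,e)\to(M',e')$ in $\tilde{\mathcal{X}}$ is an element $\boldsymbol{g}\in e'\circ\mathrm{Hom}_{\mathcal{X}}(M,M')\circ e$, that is, one with $e'\circ\boldsymbol{g}\circ e=\boldsymbol{g}$, and $\mathrm{id}_{(M,e)}=e$. For $f\colon V\to V'$ in $\mathrm{mod}\text{-}A$ I let the extended $\otimes^{(r)}$ send $(f,\boldsymbol{g})$ to the morphism induced on images by $f\otimes^{(r)}\boldsymbol{g}\colon V\otimes^{(r)}M\to V'\otimes^{(r)}M'$, the latter computed as in Proposition~\ref{prop4}. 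The interchange law together with $e'\circ\boldsymbol{g}\circ e=\boldsymbol{g}$ yields $(\mathrm{id}_{V'}\otimes^{(r)}e')\circ(f\otimes^{(r)}\boldsymbol{g})\circ(\mathrm{id}_V\otimes^{(r)}e)=f\otimes^{(r)}\boldsymbol{g}$, so $f\otimes^{(r)}\boldsymbol{g}$ indeed restricts and corestricts to the chosen images. Functoriality in each variable, preservation of identities (recalling $\mathrm{id}_{(M,e)}=e$ and \eqref{eq00}), and the bifunctorial interchange law then follow directly from the corresponding properties of $\otimes^{(r)}$ on $\mathrm{mod}\text{-}A\times\mathcal{X}$, by pre- and postcomposing with the split mono- and epimorphisms relating the images to $V\otimes^{(r)}M$.

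I expect the only point requiring genuine care — exactly the subtlety flagged in Remark~\ref{rem001} and recorded in \eqref{eq00} — to be the well-definedness of $f\otimes^{(r)}\boldsymbol{g}$ as a morphism between the images $V\otimes^{(r)}(M,e)$ and $V'\otimes^{(r)}(M',e')$, i.e. making sure that after applying $\otimes^{(r)}$ one works with representatives commuting with the defining idempotents. Once the compatibility $(\mathrm{id}_{V'}\otimes^{(r)}e')\circ(f\otimes^{(r)}\boldsymbol{g})\circ(\mathrm{id}_V\otimes^{(r)}e)=f\otimes^{(r)}\boldsymbol{g}$ above is in hand, everything else is a formal consequence of the interchange law, and no computation beyond Proposition~\ref{prop4} is needed.
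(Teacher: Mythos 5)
Your proposal is correct and follows essentially the same route as the paper: the paper's proof also defines $V\otimes^{(r)}(M,e)$ as the image of the idempotent $\mathrm{id}_V\otimes^{(r)}e$ in the idempotent complete category $\mathrm{mod}\text{-}A$ and leaves the remaining verifications (which you spell out via the interchange law and the condition $e'\circ\boldsymbol{g}\circ e=\boldsymbol{g}$ from Remark~\ref{rem001}) as routine. Your write-up simply makes explicit the ``easy to check'' step that the paper omits.
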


\begin{proof}
Let  $(M,e)\in \tilde{\mathcal{X}}$. Then, for any $V\in \mathrm{mod}\text{-}A$,
the endomorphism $\mathrm{id}_V \otimes^{(r)} e$ is an idempotent endomorphism of
$V\otimes^{(r)}M$, so we can define $V\otimes^{(r)}(M,e)$ as the image of
this idempotent. It is easy to check that this does the job for Claim~\eqref{prop5.1}.
Claim~\eqref{prop5.2} is similar.
\end{proof}

\subsection{The $2$-category $\cG_{A}$ of projective symmetric bimodules}\label{s3.3}

Assume that we are in the setup of Subsection~\ref{s3.1}. Let
\begin{displaymath}
A=A_1\times A_2\times \dots \times A_n
\end{displaymath}
be the (unique up to permutation of factors) decomposition of $A$ into a direct product of
indecomposable algebras. Assume that the action of each $\varphi\in G$ preserves each $A_i$.
Also assume that none of $A_i$ is simple. We also consider the algebra $B:=A\times \Bbbk$.

For each $i\in\{1,2,\dots,n\}$, fix a small category $\mathcal{C}_i$ equivalent to $A_i\text{-}\mathrm{proj}$.
Define the $2$-category $\cG_{A}$ to have
\begin{itemize}
\item objects $\mathtt{1},\mathtt{2},\dots,\mathtt{n}$, where we identify $\mathtt{i}$ with
$\mathcal{C}_i$;
\item $1$-morphisms are endofunctors of $\mathcal{C}:=\coprod_{\mathtt{i}}\mathcal{C}_i$ isomorphic to
functors $X\otimes^{(l)}{}_-$, where $X$ is in the additive closure of
$\left(A\oplus(A\otimes_{\Bbbk}A),\mathrm{id}_{A\oplus(A\otimes_{\Bbbk}A)}\right)$
inside $\tilde{\mathcal{X}}$;
\item $2$-morphisms are given by morphisms between $X$ and $X'$ in $\tilde{\mathcal{X}}$;
\item horizontal composition is just composition of functors;
\item vertical composition is inherited from $\tilde{\mathcal{X}}$;
\item the identity $1$-morphism in $\cG_{A}(\mathtt{i},\mathtt{i})$ is isomorphic to
$(A_i,\tilde{\pi}_{1_{\hat{G}}})\otimes^{(l)}{}_-$.
\end{itemize}
Note that the restriction on $\mathrm{char}(\Bbbk)$ as not dividing the order of $G$
is necessary to have identity $1$-morphisms.
Observe further that  $A\oplus(A\otimes_{\Bbbk}A)$ is invariant, up to isomorphism, under the functor
$M\mapsto {}^{\varphi}M^{\varphi}$, for any $\varphi\in G$. The fact that this defines a
$2$-category is justified by Proposition~\ref{propmultid2}, showing that
$(A_i,\tilde{\pi}_{1_{\hat{G}}})\otimes^{(l)}{}_-$ is indeed an identity, and the following lemma.

\begin{lemma}\label{lem6}
Let $X$ and $Y$ be  in the additive closure of
$\left(A\oplus(A\otimes_{\Bbbk}A),\mathrm{id}_{A\oplus(A\otimes_{\Bbbk}A)}\right)$
inside $\tilde{\mathcal{X}}$. Then there is an isomorphism
\begin{displaymath}
(X\otimes^{(l)}{}_-)\circ(Y\otimes^{(l)}{}_-)\cong
(X\otimes_{\tilde{\mathcal{X}}} Y)\otimes^{(l)}{}_-
\end{displaymath}
of endofunctors of $\mathcal{C}$.
\end{lemma}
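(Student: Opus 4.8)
The plan is to establish the isomorphism first at the level of the category $\mathcal{X}$, for honest bimodules, and then transport it to $\tilde{\mathcal{X}}$ by naturality. So I would begin with $A$-$A$-bimodules $M,N$ and a left $A$-module $V$, and unravel both sides using the defining formulas. On the one hand,
\[
M\otimes^{(l)}(N\otimes^{(l)}V)=\bigoplus_{\varphi,\psi\in G}{}^{\varphi}M^{\varphi}\otimes_A{}^{\psi}N^{\psi}\otimes_A V,
\]
since $N\otimes^{(l)}V=\bigoplus_{\psi}{}^{\psi}N^{\psi}\otimes_A V$ and the outer $\otimes^{(l)}$ pairs the right action of ${}^{\varphi}M^{\varphi}$ with the twisted left action on each summand. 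On the other hand, expanding $M\otimes_{\mathcal{X}}N=\bigoplus_{\rho\in G}\big(M\otimes_A{}^{\rho}N^{\rho}\big)$ and using that twisting commutes with $\otimes_A$ in the form ${}^{\varphi}\big(M\otimes_A{}^{\rho}N^{\rho}\big)^{\varphi}\cong{}^{\varphi}M\otimes_A{}^{\rho}N^{\rho\varphi}$,
\[
(M\otimes_{\mathcal{X}}N)\otimes^{(l)}V=\bigoplus_{\varphi,\rho\in G}{}^{\varphi}M\otimes_A{}^{\rho}N^{\rho\varphi}\otimes_A V.
\]
I would then match the two direct sums under the bijection $(\varphi,\psi)\mapsto(\varphi,\rho)$ with $\rho=\psi\varphi^{\mone}$, using on each summand the elementary twisting isomorphism ${}^{\varphi}M^{\varphi}\otimes_A{}^{\psi}N^{\psi}\cong{}^{\varphi}M\otimes_A{}^{\psi\varphi^{\mone}}N^{\psi}$ given by $m\otimes n\mapsto m\otimes n$, exactly as in the proof of Lemma~\ref{lem3} and equation~\eqref{eq2}. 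Tensoring with $V$ on the right and assembling over all indices produces a morphism $\Phi_{M,N,V}$ which is the identity on representatives $m\otimes n\otimes v$ after reindexing, and is visibly an isomorphism of left $A$-modules, natural in $V$.

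The second step is to check that $\Phi$ is natural with respect to $2$-morphisms, i.e. that for $f=(f_{\alpha})\in\mathrm{Hom}_{\mathcal{X}}(M,M')$ and $g=(g_{\beta})\in\mathrm{Hom}_{\mathcal{X}}(N,N')$ the square relating $f\otimes^{(l)}(g\otimes^{(l)}\mathrm{id}_V)$ with $(f\otimes_{\mathcal{X}}g)\otimes^{(l)}\mathrm{id}_V$ through $\Phi_{M,N,V}$ and $\Phi_{M',N',V}$ commutes. Since all the operations involved---$\otimes_{\mathcal{X}}$ on morphisms as in~\eqref{eq1} and $\otimes^{(l)}$ on morphisms as in Proposition~\ref{prop4}---are assembled from $\otimes_A$ with twists by elements of $G$, and $\Phi$ is the identity on representatives after reindexing, this reduces to bifunctoriality of $\otimes_A$ together with the fact that composition of twists is composition of automorphisms. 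Concretely, I would fix one source summand and one target summand, write out the single nonzero component of each morphism on it, and verify equality of the two composites exactly as in~\eqref{eq3}--\eqref{eq4}; the general case then follows by linearity. I expect this index bookkeeping to be the main obstacle: the only genuine content is that the reindexing $\rho=\psi\varphi^{\mone}$ is compatible with the various shifts by $\varphi^{\mone}$ appearing in~\eqref{eq1} and in Proposition~\ref{prop4}, so that no nontrivial cocycle obstructs the comparison.

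Finally I would pass to $\tilde{\mathcal{X}}$. Write $X=(M,\epsilon)$ and $Y=(N,\epsilon')$ with $\epsilon,\epsilon'$ idempotents in $\mathcal{X}$. By Lemma~\ref{lem2}\eqref{lem2.2} we have $X\otimes_{\tilde{\mathcal{X}}}Y=(M\otimes_{\mathcal{X}}N,\epsilon\otimes_{\mathcal{X}}\epsilon')$, and by Proposition~\ref{prop5} the functor $X\otimes^{(l)}{}_-$ sends $V$ to the image of the idempotent $\epsilon\otimes^{(l)}\mathrm{id}_V$ on $M\otimes^{(l)}V$. Using bifunctoriality of $\otimes^{(l)}$, the functor $X\otimes^{(l)}(Y\otimes^{(l)}{}_-)$ sends $V$ to the image on $M\otimes^{(l)}(N\otimes^{(l)}V)$ of the idempotent $\epsilon\otimes^{(l)}(\epsilon'\otimes^{(l)}\mathrm{id}_V)$, whereas $(X\otimes_{\tilde{\mathcal{X}}}Y)\otimes^{(l)}{}_-$ sends $V$ to the image on $(M\otimes_{\mathcal{X}}N)\otimes^{(l)}V$ of $(\epsilon\otimes_{\mathcal{X}}\epsilon')\otimes^{(l)}\mathrm{id}_V$. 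Specializing the naturality square of the second step to $f=\epsilon$ and $g=\epsilon'$ shows that $\Phi_{M,N,V}$ carries the first idempotent to the second, hence restricts to an isomorphism of their images; its naturality in $V$ and, via that same square, its compatibility with further $2$-morphisms then upgrade this to the desired isomorphism of endofunctors of $\mathcal{C}$.
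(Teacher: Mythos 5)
Your proposal is correct and follows essentially the same route as the paper: establish the isomorphism for plain bimodules in $\mathcal{X}$ via the twisting/reindexing identification (the paper's appeal to Lemma~\ref{lem3}-style isomorphisms), then verify compatibility with the idempotent structure so that the isomorphism descends to the summands $(M,\epsilon)$ and $(N,\epsilon')$ in $\tilde{\mathcal{X}}$. The only difference is cosmetic: you check the naturality square for arbitrary $2$-morphisms $f,g$, whereas the paper computes the two component formulas only for idempotents $e,f$, which is all that is needed.
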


\begin{proof}
First we assume that $X$ and $Y$ are in $\mathcal{X}$. Then, for any $P\in \mathcal{C}$, we have
\begin{displaymath}
(X\otimes_{\mathcal{X}}Y)\otimes^{(l)}P=
\left(\bigoplus_{\varphi\in G}X\otimes_A {}^{\varphi} Y^{\varphi}\right) \otimes^{(l)}P=
\bigoplus_{\varphi,\psi\in G}{}^{\psi}(X\otimes_A {}^{\varphi} Y^{\varphi})^{\psi}\otimes_A P
\end{displaymath}
and
\begin{displaymath}
X\otimes^{(l)}(Y\otimes^{(l)}P)=
X\otimes^{(l)}\left(\bigoplus_{\varphi\in G}{}^{\varphi} Y^{\varphi} \otimes_A P\right)=
\bigoplus_{\varphi,\psi\in G}{}^{\psi}X^{\psi}\otimes_A {}^{\varphi} Y^{\varphi}\otimes_A P.
\end{displaymath}
Choosing an isomorphism
\begin{displaymath}
\bigoplus_{\varphi,\psi\in G}{}^{\psi}(X\otimes_A {}^{\varphi} Y^{\varphi})^{\psi}
\cong \bigoplus_{\varphi,\psi\in G}{}^{\psi}X^{\psi}\otimes_A {}^{\varphi} Y^{\varphi}
\end{displaymath}
of $A$-$A$-bimodules yields the desired isomorphism of functors.

Now, let $e$ and $f$ be idempotents in $\mathrm{End}_{\mathcal{X}}(X)$ and $\mathrm{End}_{\mathcal{X}}(Y)$,
respectively. Consider
\begin{displaymath}
\begin{array}{rcl}
(e\otimes_{\mathcal{X}}f)\otimes^{(l)}\mathrm{id}_P&=&
\left({}^{\gamma}(e_{\alpha\gamma^{\mone}})^{\gamma}\otimes f_{\beta}\right)_{\alpha,\beta,\gamma\in G}\otimes^{(l)}\mathrm{id}_P\\
&=&\left({}^{\delta}\left({}^{\gamma}(e_{\alpha\gamma^{\mone}})^{\gamma}\otimes f_{\beta}
\right)^{\delta}_{\beta\delta^{\mone}}\otimes\mathrm{id}_P\right)_{\alpha,\beta,\gamma,\delta\in G}\\
&=&\left({}^{\gamma\delta}(e_{\alpha\gamma^{\mone}})^{\gamma\delta}\otimes {}^{\delta}(f_{\beta\delta^{\mone}})^{\delta}
\otimes\mathrm{id}_P\right)_{\alpha,\beta,\gamma,\delta\in G}
\end{array}
\end{displaymath}
and
\begin{displaymath}
\begin{array}{rcl}
e\otimes^{(l)}(f\otimes^{(l)}\mathrm{id}_P)&=&
e\otimes^{(l)}\left({}^{\psi}(f_{\varphi\psi^{\mone}})^{\psi}\otimes \mathrm{id}_P\right)_{\varphi,\psi\in G}\\
&=&
\left({}^{\tau}(e_{\sigma\tau^{\mone}})^{\tau}\otimes
{}^{\psi}(f_{\varphi\psi^{\mone}})^{\psi}\otimes \mathrm{id}_P\right)_{\sigma,\tau,\varphi,\psi\in G},
\end{array}
\end{displaymath}
we see that
\begin{displaymath}
(e\otimes_{\mathcal{X}}f)\otimes^{(l)}\mathrm{id}_P=e\otimes^{(l)}(f\otimes^{(l)}\mathrm{id}_P)
\end{displaymath}
and hence the isomorphism in the previous paragraph descends to the summands $(X,e)$ and $(Y,f)$.
\end{proof}

The $2$-category $\cG_B$ is defined similarly. To distinguish the underlying categories of bimodules,
we use the notation $\mathcal{Y}$ and $\tilde{\mathcal{Y}}$ for the corresponding
categories of symmetric $B$-$B$-bimodules.

\subsection{Two-sided cells in $\cG_{A}$}\label{s3.4}

We recall the notation introduced just before Lemma~\ref{lem1}.

\begin{proposition}\label{prop7}
The $2$-category $\cG_{A}$ has $n+1$ two-sided cells, namely

\begin{enumerate}[$($a$)$]
\item\label{prop7.1} for $i=1,2,\dots,n$, the two-sided cell $\mathcal{J}_{\mathtt{i}}$
consisting of $|G|$ elements $(\mathbbm{1}_{\mathtt{i}},\tilde{\pi}_{\varphi})$, where $\varphi\in G$,
\item\label{prop7.2}
the two-sided cell $\mathcal{J}_{0}$ consisting of all isomorphism classes of indecomposable
$1$-morphisms in the  additive closure of
$\left(A\otimes_{\Bbbk}A,\mathrm{id}_{A\otimes_{\Bbbk}A}\right)$
inside $\tilde{\mathcal{X}}$.
\end{enumerate}
\end{proposition}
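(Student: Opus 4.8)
The plan is to enumerate all indecomposable $1$-morphisms via Lemma~\ref{lem1} and then read off the $\geq_J$-preorder from direct tensor computations. First I would record the relevant bimodule decompositions: as an $A$-$A$-bimodule $A=\bigoplus_{i=1}^{n}A_i$, and each $A_i$ is indecomposable since its bimodule endomorphism ring is the connected center $Z(A_i)$; while $A\otimes_{\Bbbk}A=\bigoplus_{i,j}M_{ij}$, where $M_{ij}:=Ae_i\otimes_{\Bbbk}e_jA$ are the indecomposable projective bimodules. Since every $\varphi\in G$ restricts to an automorphism of each $A_i$, the map $\varphi$ itself witnesses $A_i\cong{}^{\varphi}A_i^{\varphi}$, so $G_{A_i}=G$ and $\hat{G}_{A_i}=\hat{G}$. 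By Lemma~\ref{lem1}\eqref{lem1.2}, the indecomposable $1$-morphisms of $\cG_{A}$ are therefore exactly the $|G|$ objects $(A_i,\varepsilon_{\chi})=(\mathbbm{1}_{\mathtt{i}},\tilde{\pi}_{\chi})$ for $\chi\in\hat{G}$ (one family per $\mathtt{i}$), together with the decorated projectives $(M_{ij},\varepsilon_{\chi})$ for $\chi\in\hat{G}_{M_{ij}}$; these are the candidates to be sorted into cells.

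Next I would treat the ``identity-type'' morphisms. By Proposition~\ref{propmultid}, restricted to the block $A_i$ one has $(A_i,\varepsilon_{\chi})\otimes_{\tilde{\mathcal{X}}}(A_i,\varepsilon_{\zeta})\cong(A_i,\varepsilon_{\chi\zeta})$, so $\{(A_i,\varepsilon_{\chi}):\chi\in\hat{G}\}$ is a group (isomorphic to $\hat{G}$) of invertible $1$-morphisms in $\cG_{A}(\mathtt{i},\mathtt{i})$; in particular each is a summand (indeed a twist) of a composite $H\circ(A_i,\varepsilon_{\zeta})$, so they all lie in one two-sided cell $\mathcal{J}_{\mathtt{i}}$. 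To see that these $n$ cells are mutually distinct and distinct from the projective cell, I would observe that every $1$-morphism in $\cG_{A}(\mathtt{i},\mathtt{j})$ with $\mathtt{i}\neq\mathtt{j}$, as well as every composite of a projective with any generator, is an additive combination of the $M_{kl}$, hence a sum of projective bimodules (using $M_{ij}\otimes_A M_{kl}\cong M_{il}^{\dim e_jAe_k}$ and that the $G$-twists preserve projectivity). Since $A_i$ is basic, indecomposable and not simple it is not semisimple, hence not separable, hence the regular bimodule $A_i$ is not projective; consequently an $(A_i,\varepsilon_{\chi})$ is never a summand of a sum of projectives, ruling out $\mathcal{J}_{\mathtt{i}}\geq_J\mathcal{J}_0$ and $\mathcal{J}_{\mathtt{i}}\geq_J\mathcal{J}_{\mathtt{i}'}$ for $i\neq i'$.

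The heart of the argument is that all $(M_{ij},\varepsilon_{\chi})$ form a single cell $\mathcal{J}_0$. At the level of bimodules I would use the identity-$G$-component of the composition, namely
\begin{displaymath}
(Ae_i\otimes_{\Bbbk}e_kA)\otimes_A(Ae_k\otimes_{\Bbbk}e_lA)\otimes_A(Ae_l\otimes_{\Bbbk}e_jA)\cong Ae_i\otimes_{\Bbbk}(e_kAe_k)\otimes_{\Bbbk}(e_lAe_l)\otimes_{\Bbbk}e_jA,
\end{displaymath}
which contains $M_{ij}$ as a direct summand (using $e_k\in e_kAe_k$ and $e_l\in e_lAe_l$). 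Choosing $k,l$ appropriately, this realizes any $M_{ij}$ as a summand of $H_1\circ M_{kl}\circ H_2$ for suitable projective $H_1,H_2$, across all blocks simultaneously, so all $M_{ij}$ are $\geq_J$-equivalent. To reach every decoration I would invoke Proposition~\ref{propmultid2}: tensoring $(M_{ij},\varepsilon_{\chi})$ on the right by the invertibles $(A,\tilde{\pi}_{\zeta})$ yields $(M_{ij},\varepsilon_{\chi\zeta})$, and since the restriction map $\hat{G}\to\hat{G}_{M_{ij}}$ is surjective, $\chi\zeta$ sweeps out all of $\hat{G}_{M_{ij}}$. Combining the bimodule-level connectivity with this adjustment of characters shows that all $(M_{ij},\varepsilon_{\chi})$ lie in one cell $\mathcal{J}_0$, which by the previous paragraph differs from each $\mathcal{J}_{\mathtt{i}}$. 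As the two families exhaust the indecomposable $1$-morphisms, this gives exactly $n+1$ two-sided cells.

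I expect the main obstacle to be the careful bookkeeping of the idempotent decorations $\varepsilon_{\chi}$ through the tensor products: one must check that the summand of $H_1\circ M_{kl}\circ H_2$ extracted at the bimodule level actually carries some decoration $\varepsilon_{\chi_0}$ with $\chi_0\in\hat{G}_{M_{ij}}$, and then that the surjectivity of $\hat{G}\to\hat{G}_{M_{ij}}$ together with Proposition~\ref{propmultid2} lets one sweep out the remaining characters. By contrast, the separation statement is robust once one records that ``not simple'' forces the regular bimodule $A_i$ to be non-projective.
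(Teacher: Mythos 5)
Your proof is correct and follows essentially the same route as the paper's: Proposition~\ref{propmultid} yields the $n$ identity-type cells, the fact that a tensor product with a projective factor never contains the regular bimodule as a summand separates them from $\mathcal{J}_0$, and bimodule-level connectivity of the projective bimodules combined with Proposition~\ref{propmultid2} (to sweep through the characters $\varepsilon_{\chi}$) gives that $\mathcal{J}_0$ is a single cell. The only difference is that you spell out two points the paper outsources, namely the projective-cell connectivity, which the paper cites from \cite[Subsection~5.1]{MM5} and you establish by the explicit three-fold tensor computation, and the non-appearance of the regular bimodule, which you correctly derive from the non-separability of the non-simple blocks $A_i$.
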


\begin{proof}
Since tensor products in which one of the factors is a  projective bimodule never contain a copy of
the regular bimodule as a direct summand, the existence of two-sided cells as claimed in
Part~\eqref{prop7.1} follows from Proposition~\ref{propmultid}.
To complete the proof of the proposition, it remains to show that all isomorphism classes of indecomposable
$1$-morphisms in the  additive closure of
$\left(A\otimes_{\Bbbk}A,\mathrm{id}_{A\otimes_{\Bbbk}A}\right)$
inside $\tilde{\mathcal{X}}$ belong to the same two-sided cell.
Ignoring idempotents in $\mathcal{X}$, the claim follows directly from \cite[Subsection~5.1]{MM5}.
In full generality, the statement is then proved using Proposition~\ref{propmultid2}.
\end{proof}

\subsection{Adjunctions}\label{s3.6}

In this subsection, we study adjunctions in the $2$-category $\cG_A$ under the assumption that $A$
is self-injective. We assume that $A$ is basic and that there is a fixed complete $G$-invariant
set $\mathtt{E}$ of primitive idempotents. We denote by $\nu$ the bijection on $\mathtt{E}$
which is induced by the Nakayama automorphism of $A$ given by
\begin{displaymath}
\mathrm{Hom}_{\Bbbk}(eA,\Bbbk)\cong A\nu(e), \quad\text{ for } e\in  \mathtt{E}.
\end{displaymath}
For a primitive idempotent $e\in A$, we denote by $\varepsilon_e$ the idempotent
in $\mathrm{End}_{\mathcal{Y}}(Ae)$ or $\mathrm{End}_{\mathcal{Y}}(eA)$ corresponding to the
trivial character of $G_{A\nu(e)}=G_{Ae}=G_{eA}=:G_e$. We denote by $\mathbf{m}$ the multiplication map in $B$.

\begin{proposition}\label{prop61}
We have adjunctions
\begin{enumerate}[$($a$)$]
\item\label{prop61.1}
$((Ae,\varepsilon_{e}),(eA,\varepsilon_{e}))$ in $\tilde{\mathcal{Y}}$;
\item\label{prop61.2}
$((eA,\varepsilon_{e}),(A\nu(e),\varepsilon_{\nu(e)}))$ in $\tilde{\mathcal{Y}}$.
\end{enumerate}
\end{proposition}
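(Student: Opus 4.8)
The plan is to establish both adjunctions by exhibiting explicit evaluation (counit) and coevaluation (unit) $2$-morphisms in $\tilde{\mathcal{Y}}$ and verifying the two triangle identities, working componentwise in $\mathcal{Y}$ and then descending to the idempotent completion. For Part~\eqref{prop61.1}, the claim is that $(Ae,\varepsilon_e)$ is left adjoint to $(eA,\varepsilon_e)$, so I need $2$-morphisms
\begin{displaymath}
\mathrm{coev}\colon (A_i,\tilde{\pi}_{1})\to (eA,\varepsilon_e)\otimes_{\tilde{\mathcal{Y}}}(Ae,\varepsilon_e),\qquad
\mathrm{ev}\colon (Ae,\varepsilon_e)\otimes_{\tilde{\mathcal{Y}}}(eA,\varepsilon_e)\to (A_i,\tilde{\pi}_{1}).
\end{displaymath}
The starting point is the classical adjunction in the category of $A$-$A$-bimodules: $Ae\otimes_{\Bbbk}eA\cong Ae\otimes_A eA$ carries the evaluation $a\otimes b\mapsto ab$ (a multiple of $\mathbf{m}$), while $eA\otimes_A Ae\cong eAe$ sits inside $A$ via the regular bimodule and supplies the coevaluation through $1\mapsto\sum e_l\otimes e_l$ for a dual basis. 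The main work is to promote these ordinary bimodule maps to morphisms in $\mathcal{Y}$, i.e. to tuples $(f_\varphi)_{\varphi\in G}$, and to fix the scalar normalization on each $G$-component so that the resulting tuples land in the correct idempotent-truncated Hom-spaces.

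Concretely, I would first record the decompositions of the relevant tensor products in $\mathcal{X}$ using the formula $M\otimes_{\mathcal{X}}N=\bigoplus_{\varphi\in G}M\otimes_A{}^{\varphi}N^{\varphi}$, so that $(eA)\otimes_{\mathcal{X}}(Ae)$ and $(Ae)\otimes_{\mathcal{X}}(eA)$ become $G$-indexed sums of bimodules, and then specify $\mathrm{coev}$ and $\mathrm{ev}$ as tuples whose $\varphi$-component is the classical (co)evaluation rescaled by a factor such as $\frac{1}{|G_e|}\chi(\varphi)$ with $\chi$ the trivial character, mirroring the normalization of $\varepsilon_e$ and $\tilde{\pi}_1$ in Lemma~\ref{lem1}\eqref{lem1.2}. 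Following the pattern of Remark~\ref{rem001}, I must pre- and postcompose these tuples with the idempotents $\varepsilon_e$ and $\tilde{\pi}_1$ to guarantee they represent genuine morphisms in $\tilde{\mathcal{Y}}$, checking \eqref{eq00}. Once the unit and counit are in place, the triangle identities reduce, via the composition formula \eqref{eq1} and bifunctoriality of $\otimes_{\mathcal{X}}$ from Lemma~\ref{lem2}, to a componentwise computation: on each $G$-component the classical triangle identity holds, and the accumulated scalars collapse to $1$ by an orthogonality-type sum over $G$ exactly as in the computations $\frac{|G|^2}{|G|^4}$ and $\sum_\beta$ in the proof of Proposition~\ref{propmultid}.

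Part~\eqref{prop61.2} is handled by the same scheme, now invoking self-injectivity of $A$: the isomorphism $\mathrm{Hom}_{\Bbbk}(eA,\Bbbk)\cong A\nu(e)$ defining $\nu$ is precisely what makes $(A\nu(e),\varepsilon_{\nu(e)})$ a right adjoint to $(eA,\varepsilon_e)$, since for self-injective $A$ the functor $eA\otimes_A{}_-$ has right adjoint $\mathrm{Hom}_A(eA,{}_-)\cong A\nu(e)\otimes_A{}_-$. I would reuse the equalities $G_{A\nu(e)}=G_{Ae}=G_{eA}=G_e$ stated before the proposition to ensure the relevant idempotents $\varepsilon_{\nu(e)}$ and $\varepsilon_e$ are all indexed by the same subgroup $G_e$, so the scalar bookkeeping is identical to Part~\eqref{prop61.1}. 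The main obstacle I anticipate is not the classical adjunction input but the careful tracking of the $G$-action twists and the normalizing scalars across the idempotent completion: getting the $\varphi$-components of the unit and counit to match up under the twist $M\mapsto{}^{\varphi}M^{\varphi}$ (and under Lemma~\ref{lem3}'s identification needed to make sense of $\otimes^{(l)}$ versus $\otimes_{\mathcal{X}}$) so that the two triangle identities hold on the nose in $\tilde{\mathcal{Y}}$ rather than merely up to an invertible scalar. I expect this verification to parallel Proposition~\ref{propmultid2} closely, and the cleanest route is to first treat the case $G_e=G$ with trivial twisting, then observe that the general statement follows by restricting all sums from $G$ to the stabilizer $G_e$.
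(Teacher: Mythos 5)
Your overall skeleton \emph{is} the paper's proof: explicit unit and counit written as $G$-indexed tuples, normalized so that they land in the idempotent-truncated Hom-spaces, verification of \eqref{eq00} by pre-/postcomposition with $\varepsilon_e$ and $\tilde{\pi}_{1_{\hat{G}}}$, and then a componentwise check of the triangle identities in which the accumulated scalars collapse via sums over the group. (Two small slips in part~\eqref{prop61.1}: the source of the unit must be $(\Bbbk,\tilde{\pi}_{1_{\hat{G}}})$, the identity $1$-morphism at the object coming from the $\Bbbk$-factor of $B$, not $(A_i,\tilde{\pi}_{1})$; and no dual basis is needed there --- the unit is built from $1\mapsto e\otimes e$ in $eA\otimes_A Ae$, suitably twisted and rescaled, which is exactly what the paper's $\eta^{\beta,\gamma}_{\varphi}$ does.) However, two steps of your plan, as stated, would fail or fall short.

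First, the proposed shortcut ``treat $G_e=G$ first, then deduce the general case by restricting all sums from $G$ to $G_e$'' is not a valid reduction. When $G_e\subsetneq G$, the components of the unit and counit are indexed by all of $G$, with support conditions on \emph{cosets} of $G_e$ (conditions like $\beta\gamma^{\mone}\in G_e$ or $\delta\in\beta G_e$), and the normalizations mix $\tfrac{1}{|G|}$ and $\tfrac{1}{|G_e|}$. In the paper's verification of the first triangle identity, each individual choice of intermediate indices contributes a summand with factor $\tfrac{1}{|G|^2|G_e|^6}$, and only after summing over the correct cosets does one recover the required $\tfrac{1}{|G_e|}$ on the $\rho$-component. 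This interplay between $G$ and $G_e$ \emph{is} the content of the verification; it cannot be produced from the untwisted case $G_e=G$ by formally shrinking index sets, so the triangle identities must be checked in the general setting directly.

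Second, for part~\eqref{prop61.2} the abstract input you cite (self-injectivity gives $\mathrm{Hom}_{\Bbbk}(eA,\Bbbk)\cong A\nu(e)$, hence $A\nu(e)\otimes_{\Bbbk}{}_-$ is right adjoint to $eA\otimes_A{}_-$) is correct, but your claim that the remaining bookkeeping is ``identical to Part~(a)'' underestimates what is left to do. To write the unit and counit concretely one must fix a linear realization of that isomorphism: the paper chooses a basis $\mathtt{A}$ of $A$ adapted to Jordan--H{\"o}lder filtrations, a trace map $\mathbf{t}$, and dual elements $a^*$, so that the unit sends $1$ to a twisted, normalized version of $\sum_{a\in\mathtt{A}}a^*\nu(e)\otimes ea$ while the counit is built from $\mathbf{t}\circ\mathbf{m}$. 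These are structurally different from the single-element unit of part~\eqref{prop61.1}, and the triangle identities now hinge on the extra identity that $\mathbf{t}(e\beta\alpha^{\mone}(a^*\nu(e)))$ is nonzero (and equal to $1$) exactly when $a=e$, which has no analogue in part~\eqref{prop61.1}. Your plan is salvageable, but this construction and the resulting computation must be carried out separately rather than inherited from part~\eqref{prop61.1}.
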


\begin{proof}
We first define the counit
\begin{displaymath}
\epsilon:(Ae\otimes_{\mathcal{Y}}eA,\varepsilon_{e}\otimes_{\mathcal{Y}}\varepsilon_{e})\to
(A,\tilde{\pi}_{1_{\hat{G}}}).
\end{displaymath}
This is defined by the vertical part of the diagram
\begin{displaymath}
\xymatrix{
\displaystyle \bigoplus_{\varphi}Ae\otimes_{\Bbbk} {}^{\varphi}eA^{\varphi}
\ar[rrrr]^{\frac{1}{|G_{e}|^2}(\alpha\otimes\beta\varphi^{\mone})_{\alpha,\beta\varphi^{\mone}\in G_e}}
\ar[d]^{\frac{1}{|G_{e}|}(\gamma\circ\mathbf{m})_{\gamma,\varphi\in G}}
&&&&\displaystyle \bigoplus_{\alpha,\beta}{}^{\alpha} Ae^{\alpha}\otimes_{\Bbbk} {}^{\beta}eA^{\beta}
\ar[d]^{\frac{1}{|G_{e}|}(\delta\circ\mathbf{m}\circ(\alpha\otimes\alpha)^{\mone})_{\delta,\alpha,\beta\in G}}
\\
\displaystyle \bigoplus_{\gamma}{}^{\gamma}A^{\gamma}
\ar[rrrr]^{\frac{1}{|G|}(\delta\gamma^{\mone})_{\delta,\gamma\in G}}
&&&& \displaystyle \bigoplus_{\delta}{}^{\delta}A^{\delta}
}
\end{displaymath}
where here and in the rest of the proof all elements indexing direct sums run through $G$,
and the notation $(\alpha\otimes\beta\varphi^{\mone})_{\alpha,\beta\varphi^{\mone}\in G_e}$  should be read
as the $(\alpha,\beta,\varphi)$-component of the map being defined as zero if the conditions
$\alpha,\beta\varphi^{\mone}\in G_e$ are not satisfied.

To check that the vertical arrows define a morphism
$(Ae\otimes_{\mathcal{Y}}eA,\varepsilon_{e}\otimes_{\mathcal{Y}}\varepsilon_{e})\to
(A,\tilde{\pi}_{1_{\hat{G}}})$, we need to verify that the diagram commutes and the result coincides with
the original map, namely, satisfying ~\eqref{eq00}.
We consider the $(\delta,\varphi)$-component of both compositions. First going to the right and then down,
$e\otimes\varphi(e)$ is mapped to
\begin{displaymath}
\frac{1}{|G_e|^3}\sum_{\alpha\in G_e,\beta\in \varphi G_e}\delta(e\alpha^{\mone}\beta(e))=
\begin{cases}
\frac{1}{|G_e|}\delta(e), & \text{ if } \varphi\in G_e,\\
0, & \text{ otherwise}.
\end{cases}
\end{displaymath}
The other way around,  $e\otimes\varphi(e)$ is sent to
\begin{displaymath}
\frac{1}{|G_e||G|}\sum_{\gamma\in G}\delta(e\varphi(e))=
 \begin{cases}
\frac{1}{|G_e|}\delta(e), & \text{ if } \varphi\in G_e,\\
0, & \text{ otherwise},
\end{cases}
\end{displaymath}
which coincides with the image of $\frac{1}{|G_{e}|}(\delta\circ\mathbf{m})_{\delta,\varphi\in G}$ on $e\otimes\varphi(e)$.
So
our counit $\epsilon$ is, indeed, well-defined.

We now define the unit
\begin{displaymath}
\eta:(\Bbbk,\tilde{\pi}_{1_{\hat{G}}})\to
(eA\otimes_{\mathcal{Y}}Ae,\varepsilon_{e}\otimes_{\mathcal{Y}}\varepsilon_{e})
\end{displaymath}
by the vertical part of the diagram
\begin{displaymath}
\xymatrix{
\Bbbk\ar[rrrr]^{\frac{1}{|G|}(\varphi)_{\varphi\in G}}
\ar[d]_{(\eta_{1}^{\delta,\alpha})_{\delta\alpha^{\mone}\in G_e}}
&&&&\displaystyle \bigoplus_{\varphi}{}^{\varphi}\Bbbk^{\varphi}
\ar[d]^{(\eta_{\varphi}^{\beta,\gamma})_{\beta\gamma^{\mone}\in G_e,\varphi\in G}}\\
\displaystyle \bigoplus_{\delta,\alpha}{}^{\delta}eA^{\delta}\otimes_A {}^{\alpha}Ae^{\alpha}
\ar[rrrr]^{\frac{1}{|G_e|^2}(\beta\delta^{\mone}\otimes\gamma\alpha^{\mone})_{\beta\delta^{\mone},\gamma\alpha^{\mone}\in G_e}}
&&&&\displaystyle \bigoplus_{\beta,\gamma}{}^{\beta}eA^{\beta}\otimes_A {}^{\gamma}Ae^{\gamma}
}
\end{displaymath}
where
\begin{equation}\label{eq9}
\eta_{\varphi}^{\beta,\gamma}(\varphi(1))=
\begin{cases}
\frac{1}{|G_e|}\beta(e)\otimes\gamma(e),& \text{ if }\beta\gamma^{\mone}\in G_e,\\
0, & \text{ otherwise}.
\end{cases}
\end{equation}

We again check that this defines a morphism
$(\Bbbk,\tilde{\pi}_{1_{\hat{G}}})\to
(eA\otimes_{\mathcal{Y}}Ae,\varepsilon_{e}\otimes_{\mathcal{Y}}\varepsilon_{e})$ by
verifying~\eqref{eq00}. Computing the $(\beta,\gamma)$-component of the path first going to the
right and then down, we see that
\begin{displaymath}
1\mapsto
\begin{cases}
\frac{1}{|G_e|}\beta(e)\otimes\gamma(e), & \text{ if } \beta\gamma^{\mone}\in G_e,\\
0, & \text{ otherwise},
\end{cases}
\end{displaymath}
that is, $(\eta_{1}^{\beta,\gamma})_{\beta\gamma^{\mone}\in G_e}$.
The other way around,
\begin{displaymath}
1\mapsto
\sum_{\delta\in\beta G_e,\alpha\in\gamma G_e}
\frac{1}{|G_e|^3}\beta(e)\otimes\gamma(e)=
\begin{cases}
\frac{1}{|G_e|}\beta(e)\otimes\gamma(e), & \text{ if } \beta\gamma^{\mone}\in G_e,\\
0, & \text{ otherwise}.
\end{cases}
\end{displaymath}
Note that the condition $\delta\alpha^{\mone}\in G_e$ is automatically satisfied for
$\beta\gamma^{\mone}\in G_e$ and $\delta\in\beta G_e,\alpha\in\gamma G_e$.
Thus our unit $\eta$ is well-defined as well.

Now we need to check the adjunction axioms. Denoting $(Ae,\varepsilon_{e})$
by $\mathrm{F}$ and $(eA,\varepsilon_{e})$ by $\mathrm{G}$, we first verify
\begin{displaymath}
\mathrm{F}\to\mathrm{F}\mathbbm{1}_{\mathtt{j}}\to \mathrm{F}\mathrm{G}\mathrm{F}\to
\mathbbm{1}_{\mathtt{i}}\mathrm{F}\to \mathrm{F}
\end{displaymath}
is the identity, for appropriate $\mathtt{i}$ and $\mathtt{j}$.
To this end, consider the commutative diagram
{\small
\begin{displaymath}
\xymatrix{
Ae\ar[rrrr]^{\frac{1}{|G_e|}(\iota)_{\iota\in G_e}}
\ar[d]^{\frac{1}{|G_e||G|}(\kappa\otimes\varphi(1))_{\kappa\in G_e,\varphi\in G}}
&&&&
\displaystyle \bigoplus_{\iota}{}^{\iota}Ae^{\iota}
\ar[d]_{\frac{1}{|G_e||G|}(\alpha\iota^{\mone}\otimes\psi(1))_{\alpha\iota^{\mone}\in G_e,\psi\in G}}
\\
\displaystyle \bigoplus_{\kappa,\varphi}{}^{\kappa}Ae^{\kappa}\otimes_{\Bbbk}{}^{\varphi}\Bbbk^{\varphi}
\ar[rrrr]^{\frac{1}{|G_e||G|}(\alpha\kappa^{\mone}\otimes \psi\varphi^{\mone})_{\alpha\kappa^{\mone}\in G_e,
\psi,\varphi\in G}}
\ar[d]|-{\frac{1}{|G_e|}(\beta\kappa^{\mone}\otimes\eta_{\varphi}^{\gamma,\delta})_{\beta\kappa^{\mone},
\gamma\delta^{\mone}\in G_e,\varphi\in G}}
&&&&
\displaystyle \bigoplus_{\alpha,\psi}{}^{\alpha}Ae^{\alpha}\otimes_{\Bbbk}{}^{\psi}\Bbbk^{\psi}
\ar[d]|-{\frac{1}{|G_e|}(\lambda\alpha^{\mone}\otimes\eta_{\psi}^{\mu,\upsilon})_{\lambda\alpha^{\mone},
\mu\upsilon^{\mone}\in G_e,\psi\in G}}
\\
\displaystyle \bigoplus_{\beta,\gamma,\delta}
{}^{\beta}Ae^{\beta}\otimes_{\Bbbk}{}^{\gamma}eA^{\gamma}\otimes_A{}^{\delta}Ae^{\delta}
\ar[rrrr]^{\frac{1}{|G_e|^3}(\lambda\beta^{\mone}\otimes \mu\gamma^{\mone}\otimes \upsilon\delta^{\mone})}
\ar[d]|-{\frac{1}{|G_e|^2}\big((\xi\beta^{\mone}\circ\mathbf{m})
\otimes\theta\delta^{\mone}\big)_{\theta\delta^{\mone}\in G_e,\xi,\beta,\gamma\in G}}
&&&&
\displaystyle \bigoplus_{\lambda,\mu,\upsilon}
{}^{\lambda}Ae^{\lambda}\otimes_{\Bbbk}{}^{\mu}eA^{\mu}\otimes_A{}^{\upsilon}Ae^{\upsilon}
\ar[d]|-{\frac{1}{|G_e|^2}\big((\sigma\lambda^{\mone}\circ\mathbf{m})
\otimes\tau\upsilon^{\mone}\big)_{\tau\upsilon^{\mone}\in G_e,\sigma,\lambda,\mu\in G}}
\\
\displaystyle \bigoplus_{\xi,\theta}{}^{\xi}A^{\xi}\otimes_A{}^{\theta}Ae^{\theta}
\ar[rrrr]^{\frac{1}{|G_e||G|}(\sigma\xi^{\mone}\otimes \tau\theta^{\mone})_{\tau\theta^{\mone}\in G_e,
\sigma,\xi\in G}}
\ar[d]|-{\frac{1}{|G_e||G|}\big(\mathbf{m}\circ(\rho\xi^{\mone}\otimes\rho\theta^{\mone})
\big)_{\rho\theta^{\mone}\in G_e,\xi\in G}}
&&&&
\displaystyle \bigoplus_{\sigma,\tau}
{}^{\sigma}A^{\sigma}\otimes_A{}^{\tau}Ae^{\tau}
\ar[d]|-{\frac{1}{|G_e||G|}\big(\mathbf{m}\circ(\omega\sigma^{\mone}\otimes\omega\tau^{\mone})
\big)_{\omega\tau^{\mone}\in G_e,\sigma\in G}}
\\
\displaystyle \bigoplus_{\rho}{}^{\rho}Ae^{\rho}
\ar[rrrr]^{\frac{1}{|G_e|}(\omega\rho^{\mone})_{\omega\rho^{\mone}\in G_e}}
&&&&
\displaystyle \bigoplus_{\omega}
{}^{\omega}Ae^{\omega}
}
\end{displaymath}
}
where in the third horizontal arrow the conditions are
$\lambda\beta^{\mone},\mu\gamma^{\mone}, \upsilon\delta^{\mone}\in G_e$.

We want the $\rho$-component of the composition on the left hand side of the diagram to be
given by $\frac{1}{|G_e|}\rho$, if $\rho\in G_e$, and by zero otherwise. To see this, first
notice that multiplication $\mathbf{m}$ in the third map will give something non-zero
only if $\beta\gamma^{\mone}\in G_e$. Taking into account all conditions specified in the
diagram, this forces $\kappa,\beta,\gamma,\delta,\theta,\rho\in G_e$ in order for the $\rho$-component
to be non-zero. Each choice of  $\kappa,\beta,\gamma,\delta,\theta\in G_e$ and $\varphi,\xi\in G$
yields a summand $\frac{1}{|G|^2|G_e|^6}\rho$ in the composition (recall the factor
$\frac{1}{|G_e|}$ in \eqref{eq9}). Summing over all these possibilities hence produces the desired result.

The fact that the composition
\begin{displaymath}
\mathrm{G}\to\mathbbm{1}_{\mathtt{j}}\mathrm{G}\to \mathrm{G}\mathrm{F}\mathrm{G}\to
\mathrm{G}\mathbbm{1}_{\mathtt{i}}\to \mathrm{G}
\end{displaymath}
is the identity follows as above by flipping all tensor factors and replacing $Ae$ by $eA$ in appropriate places. This proves part~\eqref{prop61.1}.

Assume that $\mathtt{E}=\{e_1,e_2,\ldots,e_k\}$. For any $1\leq i\leq k$, we choose a Jordan-H\"{o}lder series of each
$Ae_i$ by
\begin{displaymath}
Ae_i=X_{i,0}\supsetneq X_{i,1}\supsetneq X_{i,2}\supsetneq \cdots \supsetneq X_{i,m_i}\supsetneq X_{i,m_i+1}=0.
\end{displaymath}
As our algebra $A$ is basic, each $\Bbbk$-space $X_{i,j}/X_{i,j+1}$, where $0\leq j\leq m_i$, is of dimension one.
For each $i$, we fix some basis $\mathtt{E}_i:=\{e_i, x_{i,j}: 1\leq j\leq m_i\}$ of $Ae_i$
such that we have $x_{i,j}\in X_{i,j}\setminus X_{i,j+1}$, for every $j$.
Then $\mathtt{A}:=\displaystyle\bigcup_{i=1}^{k}\mathtt{E}_i\supset\mathtt{E}$ is a basis of $A$.
Let $\mathbf{t}:A\to \Bbbk$ be the unique linear map such that, for all $a\in \mathtt{A}$, we have
\begin{displaymath}
\mathbf{t}(a) =
\begin{cases}
1, & a= x_{i,m_i},\text{ for some }i;\\
0, & \text{otherwise}.
\end{cases}
\end{displaymath}
For $a\in\mathtt{A}$, we denote by $a^*$ the unique element in $A$ which satisfies
\begin{displaymath}
\mathbf{t}(ba^*) =
\begin{cases}
1, & b=a;\\
0, & b\in \mathtt{A}\setminus\{a\}.
\end{cases}
\end{displaymath}

We now define the unit
\begin{displaymath}
\tilde{\eta}:(A,\tilde{\pi}_{1_{\hat{G}}})\to
(A\nu(e)\otimes_{\mathcal{Y}}eA,\varepsilon_{\nu(e)}\otimes_{\mathcal{Y}}\varepsilon_{e})
\end{displaymath}
by the vertical part of the diagram
\begin{displaymath}
\xymatrix{
A\ar[rrrr]^{\frac{1}{|G|}(\varphi)_{\varphi\in G}}
\ar[d]_{(\tilde{\eta}_{1}^{\alpha,\beta})_{\alpha\beta^{\mone}\in G_e}}
&&&&\displaystyle \bigoplus_{\varphi}{}^{\varphi}A^{\varphi}
\ar[d]^{(\tilde{\eta}_{\varphi}^{\gamma,\delta})_{\gamma\delta^{\mone}\in G_e,\varphi\in G}}\\
\displaystyle \bigoplus_{\alpha,\beta}{}^{\alpha}A\nu(e)^{\alpha}\otimes_{\Bbbk} {}^{\beta}eA^{\beta}
\ar[rrrr]^{\frac{1}{|G_e|^2}(\gamma\alpha^{\mone}\otimes\delta\beta^{\mone})_{\gamma\alpha^{\mone},
\delta\beta^{\mone}\in G_e}}
&&&&\displaystyle \bigoplus_{\gamma,\delta}{}^{\gamma}A\nu(e)^{\gamma}\otimes_{\Bbbk} {}^{\delta}eA^{\delta}
}
\end{displaymath}
where
\begin{equation}\label{eq10}
\tilde{\eta}_{\varphi}^{\gamma,\delta}(\varphi(1))=
\begin{cases}
\displaystyle
\frac{1}{|G_e|}\sum_{a\in\mathtt{A}}\gamma(a^*\nu(e))\otimes\delta(ea),&
\text{ if } \gamma\delta^{\mone}\in G_e;\\
0, & \text{ otherwise}.
\end{cases}
\end{equation}
Going right and then down, summing over $\varphi\in G$ cancels the scalar $\frac{1}{|G|}$
and hence the image of $1$ in the $(\gamma,\delta)$-component is given by the
right hand side of \eqref{eq10} and equals the $(\gamma,\delta)$-component of $(\tilde{\eta}_{1}^{\gamma,\delta}(1))_{\gamma\delta^{\mone}\in G_e}$,
which implies the first equality of ~\eqref{eq00}. Going down and then right, to obtain a non-zero contribution,
we need $\alpha\beta^{\mone},\gamma\alpha^{\mone},\delta\beta^{\mone}\in G_e$, which yields
$\delta\gamma^{\mone}\in G_e$. Summing over all such choices of $\alpha$ and $\beta$,
the image of $1$ in the $(\gamma,\delta)$-component is again given by the
right hand side of \eqref{eq10}, implying the second equality of \eqref{eq00}. Hence $\tilde{\eta}$ is well-defined.

Now we define the counit
\begin{displaymath}
\tilde{\epsilon}:(eA\otimes_{\mathcal{Y}}A\nu(e),\varepsilon_{e}\otimes_{\mathcal{Y}}\varepsilon_{\nu(e)})\to
(\Bbbk,\tilde{\pi}_{1_{\hat{G}}})
\end{displaymath}
by the vertical part of the diagram
\begin{displaymath}
\xymatrix{
\displaystyle \bigoplus_{\varphi}eA\otimes_{A} {}^{\varphi}A\nu(e)^{\varphi}
\ar[rrrr]^{\frac{1}{|G_{e}|^2}(\rho\otimes\psi\varphi^{\mone})_{\rho,\psi\varphi^{\mone}\in G_e}}
\ar[d]^{\frac{1}{|G_{e}|}(\alpha\circ\mathbf{t}\circ\mathbf{m})_{\alpha,\varphi\in G}}
&&&&
\displaystyle \bigoplus_{\rho,\psi}{}^{\rho} eA^{\rho}\otimes_{\Bbbk} {}^{\psi}A\nu(e)^{\psi}
\ar[d]_{\frac{1}{|G_{e}|}(\beta\circ\mathbf{t}\circ\mathbf{m}\circ
(\rho\otimes\rho)^{\mone})_{\beta,\psi,\rho\in G}}
\\
\displaystyle \bigoplus_{\alpha}{}^{\alpha}\Bbbk^{\alpha}
\ar[rrrr]^{\frac{1}{|G|}(\beta\alpha^{\mone})_{\alpha,\beta\in G}}
&&&& \displaystyle \bigoplus_{\beta}{}^{\beta}\Bbbk^{\beta}.
}
\end{displaymath}
Consider the $(\beta,\varphi)$-component of both compositions. First going down and then to the right,
the first map is zero, for $\varphi\not\in G_e$, as $\mathbf{t}(eAf)=0$ unless $f=\nu(e)$.
If $\varphi\in G_e$, then each $\alpha$ contributes $\frac{1}{|G||G_e|}\beta\circ\mathbf{t}\circ\mathbf{m}$.
Hence the resulting map is $\frac{1}{|G_e|}\beta\circ\mathbf{t}\circ\mathbf{m}$ and
the second equality of~\eqref{eq00} is satisfied.
The right vertical map
is zero unless $\psi\rho^{\mone}\in G_e$ which, together with the conditions on the upper horizontal map,
forces $\varphi,\rho,\psi\in G_e$. Summing over the choices for $\rho$ and $\psi$, we obtain the same
resulting map and thus the diagram commutes, in which case the first equality of \eqref{eq00} holds. Hence $\tilde{\epsilon}$ is well-defined.

We now verify the adjunction axioms. Denoting $(eA,\varepsilon_{e})$
by $\tilde{\mathrm{F}}$ and $(A\nu(e),\varepsilon_{\nu(e)})$ by $\tilde{\mathrm{G}}$, we need to show that
\begin{displaymath}
\tilde{\mathrm{F}}\to\tilde{\mathrm{F}}\mathbbm{1}_{\mathtt{i}}\to
\tilde{\mathrm{F}}\tilde{\mathrm{G}}\tilde{\mathrm{F}}\to
\mathbbm{1}_{\mathtt{j}}\tilde{\mathrm{F}}\to \tilde{\mathrm{F}}
\end{displaymath}
is the identity. To this end, we assemble our maps in a large commutative diagram
as in part~\eqref{prop61.1} and compute the left hand side. This is given by the composition
\begin{displaymath}
\xymatrix{
eA\ar[rrrr]^{\frac{1}{|G||G_e|}(\delta\otimes\varphi(1))_{\delta\in G_e,\varphi\in G}}
&&&&\displaystyle \bigoplus_{\delta,\varphi}{}^{\delta}eA^{\delta}\otimes_A{}^{\varphi}A^{\varphi}
\ar[d]_{\frac{1}{|G_e|}(\alpha\delta^{\mone}\otimes\tilde{\eta}_{\varphi}^{\beta,\gamma}
)_{\alpha\delta^{\mone},\beta\gamma^{\mone}\in G_e,\varphi\in G}}
\\
\displaystyle \bigoplus_{\xi,\theta}{}^{\xi}\Bbbk^{\xi}\otimes_{\Bbbk}{}^{\theta}eA^{\theta}
\ar[d]^{\frac{1}{|G_e||G|}\big(\mathbf{m}\circ(\omega\xi^{\mone}\otimes \omega\theta^{\mone})
\big)_{\omega\theta^{\mone}\in G_e,\xi\in G}}
&&&&
\displaystyle \bigoplus_{\alpha,\beta,\gamma}{}^{\alpha}eA^{\alpha}\otimes_A{}^{\beta}A\nu(e)^{\beta}
\otimes_{\Bbbk}{}^{\gamma}eA^{\gamma}
\ar[llll]_>>>>>>>>>>>>>>>>>>>>{\frac{1}{|G_e|^2}
\big((\xi\circ\mathbf{t}\circ\mathbf{m}\circ(\alpha\otimes\alpha)^{\mone})\otimes \theta\gamma^{\mone}\big)}
\\
\displaystyle \bigoplus_{\omega}{}^{\omega}eA^{\omega}
}
\end{displaymath}
where in the third map the conditions are $\theta\gamma^{\mone}\in G_e$ and $\alpha,\beta,\xi\in G$.
However, the third map is zero unless $\alpha\beta^{\mone}\in G_e$ (for the same reason involving $\mathbf{t}$
as used above), which, together with the other conditions in the diagram, shows that we have a non-zero
contribution to the $\omega$-component only if $\delta,\alpha,\beta,\gamma,\theta,\omega\in G_e$.
For $\omega\in G_e$, the contribution of a fixed choice of $\delta,\varphi,\alpha,\beta,\gamma,\xi,\theta$
to the image of $e$ is
\begin{displaymath}
\frac{1}{|G|^2|G_e|^6}\omega\left(\sum_{a\in\mathtt{A}}\mathbf{t}(e\beta\alpha^{\mone}(a^*\nu(e)))ea\right).
\end{displaymath}
Observing that, by our choice of $\mathtt{A}$, we have
\begin{displaymath}
\mathbf{t}(e\beta\alpha^{\mone}(a^*\nu(e)))=
\begin{cases}
1, & \text{ if } a=e;\\
0, & \text{ otherwise},
\end{cases}
\end{displaymath}
and summing over all choices of $\delta,\varphi,\alpha,\beta,\gamma,\xi,\theta$, we obtain that
the image of $e$ is $\frac{1}{|G_e|}\omega(e)$, as desired.

Now we verify the other axiom. The composition
\begin{displaymath}
\tilde{\mathrm{G}}\to\mathbbm{1}_{\mathtt{i}}\tilde{\mathrm{G}}\to
\tilde{\mathrm{G}}\tilde{\mathrm{F}}\tilde{\mathrm{G}}\to
\tilde{\mathrm{G}}\mathbbm{1}_{\mathtt{j}}\to \tilde{\mathrm{G}}
\end{displaymath}
is given by the diagram, which consists of the left hand side of a large commutative diagram
as in part~\eqref{prop61.1},
\begin{displaymath}
\xymatrix{
A\nu(e)\ar[rrrr]^{\frac{1}{|G||G_e|}(\alpha(1)\otimes\varphi)_{\alpha\in G,\varphi\in G_e}}
&&&&\displaystyle \bigoplus_{\alpha,\varphi}{}^{\alpha}A^{\alpha}\otimes_A{}^{\varphi}A\nu(e)^{\varphi}
\ar[d]_{\frac{1}{|G_e|}(\tilde{\eta}_{\alpha}^{\delta,\beta}\otimes\gamma\varphi^{\mone}
)_{\delta\beta^{\mone},\gamma\varphi^{\mone}\in G_e,\alpha\in G}}
\\
\displaystyle \bigoplus_{\xi,\theta}{}^{\theta}A\nu(e)^{\theta}\otimes_{\Bbbk}{}^{\xi}\Bbbk^{\xi}
\ar[d]^{\frac{1}{|G_e||G|}\big(\mathbf{m}\circ(\omega\theta^{\mone}\otimes \omega\xi^{\mone})
\big)_{\omega\theta^{\mone}\in G_e,\xi\in G}}
&&&&
\displaystyle \bigoplus_{\delta,\beta,\gamma}{}^{\delta}A\nu(e)^{\delta}\otimes_{\Bbbk}{}^{\beta}eA^{\beta}
\otimes_A{}^{\gamma}A\nu(e)^{\gamma}
\ar[llll]_>>>>>>>>>>>>>>>>>>>>{\frac{1}{|G_e|^2}
\big(\theta\delta^{\mone}\otimes(\xi\circ\mathbf{t}\circ\mathbf{m}\circ(\beta\otimes\beta)^{\mone})\big)}
\\
\displaystyle \bigoplus_{\omega}{}^{\omega}A\nu(e)^{\omega}
}
\end{displaymath}
where in the third map the conditions are $\theta\delta^{\mone}\in G_e$ and $\beta,\gamma,\xi\in G$.
Note that the third map is zero unless $\beta\gamma^{\mone}\in G_e$. Taking into account all conditions in the diagram,
this shows that we have a non-zero
contribution to the $\omega$-component only if $\varphi,\gamma,\beta,\delta,\theta,\omega\in G_e$.
For $\omega\in G_e$, the contribution of a fixed choice of $\alpha,\varphi,\gamma,\beta,\delta,\theta,\xi$
to the image of $\nu(e)$ is
\begin{displaymath}
\frac{1}{|G|^2|G_e|^6}\omega\left(\sum_{a\in\mathtt{A}}a^*\nu(e)\mathbf{t}(ea\beta^{\mone}\gamma(\nu(e)))\right).
\end{displaymath}
By our choice of $\mathtt{A}$, we have
\begin{displaymath}
\mathbf{t}(ea\beta^{\mone}\gamma(\nu(e)))=
\begin{cases}
1, & \text{ if } a^*=\nu(e);\\
0, & \text{ otherwise}.
\end{cases}
\end{displaymath}
Summing over all choices of $\alpha,\varphi,\gamma,\beta,\delta,\theta,\xi$, we obtain that
the image of $\nu(e)$ is $\frac{1}{|G_e|}\omega(\nu(e))$, as desired. This completes the proof.
\end{proof}

We now consider tensor products of indecomposable projective symmetric $B$-$B$-bi\-mo\-du\-les
with simple quotients of projective $A$-$\Bbbk$-bimodules. To this end, we extend our
notation to $G_{fe}:= G_{Af\otimes_{\Bbbk} eA}(=G_e\cap G_f)$, for $e,f\in \mathtt{E}$,
and denote the simple quotient of $(Ae, \varepsilon_e)$ by $(L_e, \varepsilon_e)$. As each $\varphi\in G$
is an automorphism of $A$, we have the induced action of $\varphi$ on $\{(L_e, \varepsilon_e)\,:\,e\in\mathtt{E}\}$
which maps each vector space $L_e$ to the vector space $L_{\varphi(e)}$.

\begin{lemma}\label{lem62}
In $\tilde{\mathcal{Y}}$, there is an isomorphism
$$(Af\otimes_\Bbbk eA, \varepsilon_{fe})\otimes_{\tilde{\mathcal{Y}}} (L_e, \varepsilon_e) \cong(Af, \frac{1}{|G_{fe}|} (\gamma )_{\gamma\in G_{fe}})\cong\bigoplus_{\xi}(Af, \varepsilon_{\xi}) $$
where $\xi$ runs over all characters appearing in the induction of the trivial $G_{fe}$-module to $G_f$.
\end{lemma}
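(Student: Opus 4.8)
The plan is to compute the left-hand side as the image of an explicit idempotent acting on a concrete $B$-$B$-bimodule, and then to split that idempotent using the character theory of $G_f$.

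First I would reduce to the underlying bimodule. By the $\mathcal{Y}$-analogue of Lemma~\ref{lem2}\eqref{lem2.2}, the left-hand side equals $(M\otimes_{\mathcal{Y}}L_e,\,\varepsilon_{fe}\otimes_{\mathcal{Y}}\varepsilon_e)$, where $M=Af\otimes_{\Bbbk}eA$. Unravelling the definition of $\otimes_{\mathcal{Y}}$ and using the standard isomorphism $eA\otimes_A{}^{\varphi}L_e\cong\varphi(e)L_e$, I would obtain $M\otimes_{\mathcal{Y}}L_e=\bigoplus_{\varphi\in G}Af\otimes_{\Bbbk}\big(\varphi(e)L_e\big)$. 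Since $A$ is basic and $\mathtt{E}$ is $G$-invariant, distinct idempotents in $\mathtt{E}$ index non-isomorphic simples, so $\varphi(e)L_e\neq 0$ precisely when $\varphi(e)=e$, i.e.\ $\varphi\in G_e$; in that case $\varphi(e)L_e=eL_e$ is one-dimensional and the summand is $\cong Af$. Hence the underlying bimodule is $\bigoplus_{\varphi\in G_e}Af$, a sum of $|G_e|$ copies of $Af$, and $\varepsilon_e$ is the trivial-character idempotent of $G_{L_e}=G_e$.

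The crux is then to identify the image of $\varepsilon_{fe}\otimes_{\mathcal{Y}}\varepsilon_e$. Adapting the method of Proposition~\ref{propmultid2}, I would exhibit explicit mutually inverse morphisms $\boldsymbol f$ and $\boldsymbol g$ in $\tilde{\mathcal{Y}}$ between $(M\otimes_{\mathcal{Y}}L_e,\varepsilon_{fe}\otimes_{\mathcal{Y}}\varepsilon_e)$ and $(Af,\frac{1}{|G_{fe}|}(\gamma)_{\gamma\in G_{fe}})$. Here $\boldsymbol f$ sends the generator $f$ into the summands $Af\otimes_{\Bbbk}(\sigma(e)L_e)$, its $\sigma$-component being a normalised multiple of $f\otimes\overline{\sigma(e)}$ (the bar denoting the image in $L_e$), and $\boldsymbol g$ runs in the opposite direction, with the scalars governed by the factors $\tfrac{1}{|G_{fe}|}$, $\tfrac{1}{|G_e|}$ and $\tfrac{1}{|G|}$ coming from the two idempotents. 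One must then check that $\boldsymbol f$ and $\boldsymbol g$ satisfy \eqref{eq00}, i.e.\ factor through the correct idempotents, and that $\boldsymbol g\circ\boldsymbol f$ and $\boldsymbol f\circ\boldsymbol g$ are the respective idempotents. I expect this scalar bookkeeping to be the main obstacle, together with explaining why the index set collapses from $G_e$ (which survives the tensor product) to $G_{fe}$: the only non-zero components of $\varepsilon_{fe}$ are indexed by $G_{fe}=G_e\cap G_f$, so pre- and post-composing with $\varepsilon_{fe}\otimes_{\mathcal{Y}}\varepsilon_e$ annihilates every contribution outside $G_{fe}$ and leaves the averaging idempotent $\tfrac{1}{|G_{fe}|}\sum_{\gamma\in G_{fe}}\gamma$ acting on a single copy of $Af$. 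Note that this element indeed lies in $\mathrm{End}_{\mathcal{Y}}(Af)$ because $G_{fe}\subseteq G_f=G_{Af}$, and that it is idempotent since it is the average over a subgroup.

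Finally I would decompose $(Af,\frac{1}{|G_{fe}|}(\gamma)_{\gamma\in G_{fe}})$. Working in $\mathrm{End}_{\mathcal{Y}}(Af)/\mathrm{Rad}\cong\Bbbk[G_f]$ from Lemma~\ref{lem1}\eqref{lem1.1}, the averaging idempotent $\pi:=\frac{1}{|G_{fe}|}\sum_{\gamma\in G_{fe}}\gamma$ is a sum of the primitive idempotents $\varepsilon_{\xi}$, $\xi\in\hat{G}_f$. Since $\varepsilon_{\xi}$ scales $\pi$ by $\frac{1}{|G_{fe}|}\sum_{\gamma\in G_{fe}}\xi(\gamma)$, orthogonality of characters shows this scalar equals $1$ when $\xi|_{G_{fe}}$ is trivial and $0$ otherwise; thus $\pi=\sum_{\xi|_{G_{fe}}=1}\varepsilon_{\xi}$ and $(Af,\pi)\cong\bigoplus_{\xi|_{G_{fe}}=1}(Af,\varepsilon_{\xi})$. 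By Frobenius reciprocity the characters $\xi\in\hat{G}_f$ whose restriction to $G_{fe}$ is trivial are exactly the constituents of $\mathrm{Ind}_{G_{fe}}^{G_f}\mathbf 1$, which is the index set in the statement. Combining the three steps yields both asserted isomorphisms.
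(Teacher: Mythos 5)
Your proposal follows essentially the same route as the paper's proof: the paper likewise constructs explicit mutually inverse morphisms (there called $\boldsymbol{g}$ and $\boldsymbol{h}$) between $(Af\otimes_{\Bbbk}eA,\varepsilon_{fe})\otimes_{\tilde{\mathcal{Y}}}(L_e,\varepsilon_e)$ and $(Af,\tfrac{1}{|G_{fe}|}(\gamma)_{\gamma\in G_{fe}})$, checks \eqref{eq00} and that both composites are the respective idempotents, and then splits the $G_{fe}$-averaging idempotent into the primitive idempotents $\varepsilon_{\xi}$ of $\Bbbk[G_f]$ whose characters restrict trivially to $G_{fe}$, which is exactly your Frobenius-reciprocity step. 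The scalar bookkeeping you defer (including the collapse of the index set from $G_e$ to $G_{fe}$ forced by $\varepsilon_{fe}$) is precisely what the paper's commutative diagrams carry out, so your plan is sound.
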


\begin{proof}
We first construct an isomorphism between $(Af\otimes_{\Bbbk} eA, \varepsilon_{fe})\otimes_{\tilde{\mathcal{Y}}} (L_e, \varepsilon_e)$ and $(Af, \frac{1}{|G_{fe}|} (\gamma )_{\gamma\in G_{fe}})$.
In one direction, the morphism $\boldsymbol{g}$ is given by the diagram
{\small
\begin{displaymath}
\xymatrix{
Af \ar^{\frac{1}{|G_{fe}|}( \gamma )_{\gamma\in G_{fe}}}[rrrr]\ar|-{\frac{1}{|G_{fe}||G_{e}|}(\psi\otimes \psi(e)\otimes \varphi(l))_{\psi\in G_{fe},\varphi\in G_e}}[d]
&&&& \displaystyle\bigoplus_{\gamma} {}^{\gamma} Af^{\gamma}\ar|-{\hspace{-4mm}\frac{1}{|G_{fe}||G_{e}|}(\alpha\gamma^{\mone}\otimes \alpha(e)\otimes \beta(l))_{\alpha\gamma^{\mone}\in G_{fe},\beta\in G_e}}[d] \\
 \displaystyle\bigoplus_{\varphi,\psi}{}^{\psi}Af\otimes_{\Bbbk} eA^{\psi}\otimes_A {}^{\varphi}(L_e)^{\varphi}
\ar^{\frac{1}{|G_{fe}||G_{e}|}(\alpha\psi^{\mone}\otimes\alpha\psi^{\mone}\otimes \beta\varphi^{\mone} )}[rrrr]
&&&& \displaystyle\bigoplus_{\alpha,\beta}{}^{\alpha}Af\otimes_{\Bbbk} eA^{\alpha}\otimes_A {}^{\beta}(L_e)^{\beta}
}
\end{displaymath}
}

where the lower horizontal map is indexed by $\alpha\psi^{\mone}\in G_{fe}, \beta\varphi^{\mone}\in G_e $, and $l$ denotes the canonical generator of the one-dimensional module $L_e$ (the image of $e$ in $L_e$).
To see that the diagram commutes, first notice that the $(\alpha, \beta)$-component of the object in the lower right-hand corner is nonzero if and only if $\alpha\beta^{\mone}\in G_e$. If this is the case, then the $(\alpha, \beta)$-component of the map going first to the right and then down is given by
$$ f\mapsto
\begin{cases}
\frac{1}{|G_{fe}||G_{e}|}\alpha(f)\otimes \alpha(e)\otimes \beta(l), & \text{ if } \alpha \in G_{fe},\\
0, & \text{ otherwise.}
\end{cases}$$
First going down and then to the right, we notice that $\varphi\in G_e, \psi \in G_{fe}$ forces $\beta\in G_e, \alpha \in G_{fe}$, so we obtain the same result, which verifies \eqref{eq00}.

A morphism $\boldsymbol{h}$ in the other direction is given by
{\small
\begin{displaymath}
\xymatrix{
 \displaystyle\bigoplus_{\varphi}Af\otimes_{\Bbbk} eA\otimes_A {}^{\varphi}(L_e)^{\varphi}
\ar^{\frac{1}{|G_{fe}||G_{e}|}(\alpha\otimes\alpha\otimes \beta\varphi^{\mone} )_{\alpha\in G_{fe}, \beta\varphi^{\mone}\in G_e}}[rrrrr]\ar^{\frac{1}{|G_{fe}|}(\delta \otimes \mathbf{m})_{\delta\in G_{fe},\varphi\in G}}[d]
&&&&& \displaystyle\bigoplus_{\alpha,\beta}{}^{\alpha}Af\otimes_{\Bbbk} eA^{\alpha}\otimes_A {}^{\beta}(L_e)^{\beta} \ar_{\frac{1}{|G_{fe}|}(\gamma \alpha^{\mone} \otimes \mathbf{m}\circ (\alpha\otimes\alpha)^{\mone})_{\gamma \alpha^{\mone}\in G_{fe},\beta\in G}}[d] \\ \displaystyle\bigoplus
{}^{\delta}Af^{\delta} \ar^{\frac{1}{|G_{fe}|}( \gamma\delta^{\mone} )_{\gamma\delta^{\mone}\in G_{fe}}}[rrrrr]
&&&&& \displaystyle\bigoplus_{\gamma} {}^{\gamma} Af^{\gamma}
}
\end{displaymath}
}

A nonzero contribution to the $(\gamma, \varphi)$-component, when going first down and then to the right
can only happen for $\varphi\in G_e$ and $\gamma \in G_{fe}$, in which case the generator
$f\otimes e \otimes\varphi(l)$
(as an $A$-$\Bbbk$-bimodule) gets sent to $\frac{1}{|G_{fe}|}\gamma(f)$.
Similarly, going first to the right and then down, a nonzero contribution
only occurs for $\alpha\beta^{\mone}\in G_e$, which, together with the
conditions in the diagram, again forces $\beta, \varphi \in G_e, \alpha, \gamma \in G_{fe}$.
Hence, summing over such $\alpha$ and $\beta$, we obtain the same result which verifies \eqref{eq00}.

To check that both compositions of $\boldsymbol{g}$ and $\boldsymbol{h}$ are the respective identities
(that is, the correct idempotents), it suffices to consider the
compositions of the left hand side of the diagrams.

Starting with $\boldsymbol{g}\circ\boldsymbol{h}$ and considering
$$\bigoplus_{\varphi}Af\otimes_{\Bbbk} eA\otimes_A {}^{\varphi}(L_e)^{\varphi} \to \bigoplus_{\delta}
{}^{\delta}Af^{\delta} \to \bigoplus_{\alpha,\beta}{}^{\alpha}Af\otimes_{\Bbbk} eA^{\alpha}\otimes_A 
{}^{\beta}(L_e)^{\beta},$$
the $(\alpha,\beta,\varphi)$-component of the composition is zero unless
$\varphi, \beta \in G_e, \alpha\in G_{fe}$, in which case the generator
$f\otimes e \otimes\varphi(l)$ is mapped to
$\frac{1}{|G_{fe}||G_{e}|}\alpha(f)\otimes \alpha(e)\otimes \beta(l)$, as desired.

For $\boldsymbol{h}\circ\boldsymbol{g}$, we consider
$$Af \to \bigoplus_{\varphi,\psi}{}^{\psi}Af\otimes_{\Bbbk} eA^{\psi}\otimes_A {}^{\varphi}(L_e)^{\varphi} \to  \bigoplus_{\gamma} {}^{\gamma} Af^{\gamma}$$
and verify that, in the $\gamma$-component, $f$ is indeed sent to $\frac{1}{|G_{fe}|}\gamma(f)$, if $\gamma \in G_{fe}$, and zero otherwise, as claimed.

Hence we have an isomorphism $(Af\otimes_{\Bbbk} eA, \varepsilon_{fe})\otimes_{\tilde{\mathcal{Y}}} (L_e, \varepsilon_e) \cong(Af, \frac{1}{|G_{fe}|} (\gamma )_{\gamma\in G_{fe}})$, as stated.

Now notice that $\displaystyle\frac{1}{|G_{fe}|} \sum_{\gamma \in G_{fe}}\gamma$ is a trivial idempotent on $G_{fe}$. When viewed as an idempotent of the larger group $G_f$, it decomposes into precisely the (multiplicity-free) sum of those idempotents affording characters $\xi$ of $G_f$ which appear in the induction of the trivial character from $G_{fe}$ to $G_f$. This proves the proposition.
\end{proof}

\begin{proposition}\label{prop63}
We have adjunctions $((Af\otimes_{\Bbbk}eA, \varepsilon_{fe}), (A\nu(e)\otimes_{\Bbbk}fA, \varepsilon_{\nu(e)f}))$,
for idempotents $e,f\in A$.
\end{proposition}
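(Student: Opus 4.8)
The plan is to obtain the adjunction by \emph{composing} the two atomic adjunctions established in Proposition~\ref{prop61} and then restricting to the appropriate direct summands. Recall that adjunctions compose in any bicategory: if $\mathrm{F}\dashv\mathrm{F}'$ and $\mathrm{G}\dashv\mathrm{G}'$ are adjoint pairs of composable $1$-morphisms, then $\mathrm{F}\circ\mathrm{G}\dashv\mathrm{G}'\circ\mathrm{F}'$, with unit and counit obtained by pasting (whiskering) those of the two given adjunctions. Applying Proposition~\ref{prop61}\eqref{prop61.1} with $e$ replaced by $f$ gives $(Af,\varepsilon_f)\dashv(fA,\varepsilon_f)$, while Proposition~\ref{prop61}\eqref{prop61.2} gives $(eA,\varepsilon_e)\dashv(A\nu(e),\varepsilon_{\nu(e)})$. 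Setting $\mathrm{F}=(Af,\varepsilon_f)$ and $\mathrm{G}=(eA,\varepsilon_e)$, I therefore obtain an adjunction $\mathrm{F}\circ\mathrm{G}\dashv(A\nu(e),\varepsilon_{\nu(e)})\circ(fA,\varepsilon_f)$.

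By Lemma~\ref{lem6}, the composite $\mathrm{F}\circ\mathrm{G}$ is the $1$-morphism associated with $(Af,\varepsilon_f)\otimes_{\tilde{\mathcal{Y}}}(eA,\varepsilon_e)$, whose underlying bimodule is $\bigoplus_{\varphi\in G}Af\otimes_{\Bbbk}{}^{\varphi}eA^{\varphi}$, and the adjoint composite is associated with $A\nu(e)\otimes_{\tilde{\mathcal{Y}}}fA$. The next step is to identify $(Af\otimes_{\Bbbk}eA,\varepsilon_{fe})$ as the direct summand of $\mathrm{F}\circ\mathrm{G}$ sitting on the $\varphi=1$ bimodule $Af\otimes_{\Bbbk}eA$ and affording the trivial character of $G_{fe}=G_f\cap G_e$, and symmetrically to identify $(A\nu(e)\otimes_{\Bbbk}fA,\varepsilon_{\nu(e)f})$ as the corresponding summand of the adjoint composite. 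Here the crucial bookkeeping is that, since $G_{\nu(e)}=G_e$ (as recorded in Subsection~\ref{s3.6}), we have $G_{\nu(e)f}=G_{\nu(e)}\cap G_f=G_e\cap G_f=G_{fe}$, so that both summands are governed by the same subgroup and the same (trivial) character.

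It then remains to restrict the composed adjunction to these summands. I would invoke the general principle that an adjunction $(\mathrm{X},\mathrm{Y})$ restricts to an adjunction between the images of an idempotent endomorphism $p$ of the $1$-morphism $\mathrm{X}$ and its mate $q$ on $\mathrm{Y}$, the restricted unit and counit being the composed unit and counit pre- and postcomposed with $q$ and $p$ (cf.\ Remark~\ref{rem001} and~\eqref{eq00}). Concretely, I would sandwich the unit and counit pasted from $\eta,\tilde{\eta},\epsilon,\tilde{\epsilon}$ of Proposition~\ref{prop61} between $\varepsilon_{fe}$ and $\varepsilon_{\nu(e)f}$ and verify the two triangle identities on the resulting summands.

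The main obstacle is precisely this last verification, namely that $\varepsilon_{\nu(e)f}$ is the mate of $\varepsilon_{fe}$, equivalently that the symmetrizing idempotents survive the zig-zag compositions with the correct scalar. I expect this to be an explicit component computation in the spirit of, and assembled from, the diagram chases in the proof of Proposition~\ref{prop61}: the normalizing factors $\frac{1}{|G_e|}$ and $\frac{1}{|G_f|}$ produced by the four atomic structure maps recombine, after summation over the group, into the factor $\frac{1}{|G_{fe}|}$ dictated by $\varepsilon_{fe}$ and $\varepsilon_{\nu(e)f}$, precisely because the relevant products of components are supported on $G_{fe}=G_{\nu(e)f}$. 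Once these scalars are matched, the triangle identities reduce to those already verified in Proposition~\ref{prop61}, completing the proof.
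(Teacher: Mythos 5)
Your strategy---composing the adjunctions $(Af,\varepsilon_f)\dashv(fA,\varepsilon_f)$ and $(eA,\varepsilon_e)\dashv(A\nu(e),\varepsilon_{\nu(e)})$ from Proposition~\ref{prop61} and then passing to direct summands via mates of idempotents---is legitimate as far as it goes, and it reproduces, by a different route, the starting point of the paper's own proof: a right adjoint of $(Af\otimes_{\Bbbk}eA,\varepsilon_{fe})$ exists and, being an indecomposable summand of $(A\nu(e),\varepsilon_{\nu(e)})\circ(fA,\varepsilon_f)$ with the correct underlying bimodule, must be of the form $(A\nu(e)\otimes_{\Bbbk}fA,\varepsilon_{\chi})$ for \emph{some} $\chi\in\hat{G}_{\nu(e)f}$ (the paper obtains this in one sentence from the defining action of $\cG_B$ on $B$-mod). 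But the entire content of the proposition is that $\chi$ is the \emph{trivial} character, and this is exactly where your argument has a genuine gap. Your justification---that the relevant products of components are ``supported on $G_{fe}=G_{\nu(e)f}$'', so the normalizing factors recombine into $\frac{1}{|G_{fe}|}$---proves too little: by Lemma~\ref{lem1}, an idempotent cutting out \emph{any} $(A\nu(e)\otimes_{\Bbbk}fA,\varepsilon_{\chi})$ has components supported on $G_{fe}$, with coefficients $\frac{\chi(\gamma)}{|G_{fe}|}$, so support considerations cannot distinguish $\chi=1$ from $\chi\neq 1$. To settle the point by your method you would have to compute the mate of the idempotent explicitly through the whiskered units and counits of Proposition~\ref{prop61}---a computation involving the trace form $\mathbf{t}$, the dual basis elements $a^*$ and the Nakayama bijection $\nu$, at least as long as the diagram chases in Proposition~\ref{prop61} itself---and nothing in your sketch rules out in advance that a sign or root-of-unity twist appears. (A smaller unproved step: you assert that $(Af\otimes_{\Bbbk}eA,\varepsilon_{fe})$ occurs as the summand of $(Af,\varepsilon_f)\circ(eA,\varepsilon_e)$ ``sitting on the $\varphi=1$ component''; the idempotent $\varepsilon_f\otimes_{\mathcal{Y}}\varepsilon_e$ mixes components, so even the occurrence of this summand needs an argument, e.g.\ by evaluating on $(L_e,\varepsilon_e)$ and using Lemma~\ref{lem62}.)

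For comparison, the paper avoids the mate computation altogether. It evaluates on simple modules via Lemma~\ref{lem62} and the adjunction to get an isomorphism between $U=\mathrm{Hom}_{\tilde{\mathcal{Y}}}\big((Af, \frac{1}{|G_{fe}|} (\beta)_{\beta\in G_{fe}}),(L_f, \varepsilon_f)\big)$ and $V=\mathrm{Hom}_{\tilde{\mathcal{Y}}}\big((L_e, \varepsilon_e),(A\nu(e),  \frac{1}{|G_{fe}|} (\chi(\gamma)\gamma)_{\gamma\in G_{fe}})\big)$, shows directly that $\dim_\Bbbk U=1$, and then uses the compatibility \eqref{eq12} of a nonzero element of $V$ with the defining idempotents to force $\frac{1}{|G_{fe}|}\sum_{\varphi\in G_{fe}}\chi(\varphi)=1$, whence $\chi=\varepsilon_{\nu(e)f}$. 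Some such character-detecting argument (or the full explicit mate computation) is indispensable; as written, your proposal asserts the conclusion of that step rather than proving it.
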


\begin{proof}
From the defining action of $\cG_B$ on $B$-$\mathrm{mod}$ we have that
$(Af\otimes_{\Bbbk}eA, \varepsilon_{fe})$ is left adjoint to
$(A\nu(e)\otimes_{\Bbbk}fA, \varepsilon_{\chi})$, for some $\chi\in \hat{G}_{\nu(e)f}$.
We thus have an isomorphism of nonzero spaces of homomorphisms,
{\small
\begin{equation*}
\begin{split}
\mathrm{Hom}_{\tilde{\mathcal{Y}}}((Af, \frac{1}{|G_{fe}|} (\beta)_{\beta\in G_{fe}}),(L_f, \varepsilon_f))
&\cong  \mathrm{Hom}_{\tilde{\mathcal{Y}}}((Af\otimes_{\Bbbk}eA,\varepsilon_{fe})\otimes_{\tilde{\mathcal{Y}}}(L_e, \varepsilon_e), (L_f, \varepsilon_f))\\
&\cong \mathrm{Hom}_{\tilde{\mathcal{Y}}}((L_e, \varepsilon_e),(A\nu(e)\otimes_{\Bbbk}fA, \varepsilon_{\chi})\otimes_{\tilde{\mathcal{Y}}} (L_f, \varepsilon_f)),
\end{split}
\end{equation*}
}

where the first isomorphism follows from Lemma~\ref{lem62}.

By (the opposite of) Proposition \ref{propmultid2}, noting that $G_{fe} = G_{\nu(e)f}$ and $G_{\nu(e)}=G_e$, there are isomorphisms
\begin{equation*}
\begin{split}
(A\nu(e)\otimes_{\Bbbk}fA, \varepsilon_{\chi})\otimes_{\tilde{\mathcal{Y}}} (L_f, \varepsilon_f)) &\cong (A, \tilde{\pi}_\chi)\otimes_{\tilde{\mathcal{Y}}} (A\nu(e)\otimes_{\Bbbk}fA, \varepsilon_{\nu(e)f})\otimes_{\tilde{\mathcal{Y}}} (L_f, \varepsilon_f)\\
&\cong (A, \tilde{\pi}_\chi)\otimes_{\tilde{\mathcal{Y}}} (A\nu(e), \frac{1}{|G_{fe}|} (\gamma)_{\gamma\in G_{fe}})\\
&\cong  (A\nu(e),  \frac{1}{|G_{fe}|} (\chi(\gamma)\gamma)_{\gamma\in G_{fe}}),
\end{split}
\end{equation*}
yielding the isomorphism
\begin{displaymath}
\mathrm{Hom}_{\tilde{\mathcal{Y}}}((Af, \frac{1}{|G_{fe}|} (\beta)_{\beta\in G_{fe}}),(L_f, \varepsilon_f))\cong
\mathrm{Hom}_{\tilde{\mathcal{Y}}}((L_e, \varepsilon_e),(A\nu(e),  \frac{1}{|G_{fe}|} (\chi(\gamma)\gamma)_{\gamma\in G_{fe}})).
\end{displaymath}
Denote the left hand side by $U$ and the right hand side by $V$.

Now we claim that $\dim_\Bbbk U=1$. By \eqref{eq00}, for any morphism $\boldsymbol{g}\in U$, we have
\begin{equation}\label{eq11}
\text{id}_{(L_f, \varepsilon_f)}\circ \boldsymbol{g}= \boldsymbol{g}=\boldsymbol{g}\circ \text{id}_{(Af, \frac{1}{|G_{fe}|} (\beta)_{\beta\in G_{fe}})}.
\end{equation}
Assume that $\boldsymbol{g}=(k_\alpha\alpha)_{\alpha\in G_f}$, where $k_\alpha\in\Bbbk$, and consider the diagram
\begin{displaymath}
\xymatrix{Af\ar[rrr]^{ \frac{1}{|G_{fe}|} (\beta)_{\beta\in G_{fe}}}
\ar[d]_{(k_{\alpha}\alpha)_{\alpha\in G_f} }&&& \displaystyle\bigoplus_{\beta}
{}^{\beta}Af^{\beta} \ar[d]^{(k_{\gamma\beta^{\mone}}\gamma\beta^{\mone})_{\gamma\beta^{\mone}\in G_{f}}}\\
\displaystyle\bigoplus_{\alpha}{}^{\alpha}(L_f)^{\alpha}\ar[rrr]^{\frac{1}{|G_{f}|} (\gamma\alpha^{\mone})_{\gamma\alpha^{\mone}\in G_{f}}} &&&  \displaystyle\bigoplus_{\gamma}{}^{\gamma}(L_f)^{\gamma}.}
\end{displaymath}
The morphism $\text{id}_{(L_f,\varepsilon_f)}\circ \boldsymbol{g}$ is exactly the path going down and then right,
taking into account the fact that $\alpha\in G_f$ forces $\gamma\in G_f$, and the $\gamma$-component of this morphism is given by
$$ f\mapsto
\begin{cases}
\frac{1}{|G_{f}|}\Big(\displaystyle\sum_{\alpha\in G_f}k_{\alpha}\Big) \gamma(f),& \text{ if } \gamma \in G_{f};\\
0, & \text{ otherwise.}
\end{cases}$$
The first equality of \eqref{eq11} shows that
$\frac{1}{|G_{f}|}\displaystyle\sum_{\alpha\in G_f}k_{\alpha}=k_{\gamma}$,
for all $\gamma\in G_f$, and hence
$k_{\gamma}=k_{\gamma'}$ for all $\gamma,\gamma'\in G_f$.
Then we obtain $\boldsymbol{g}=(k\alpha)_{\alpha\in G_f}$, where $k\in \Bbbk$,
and the first equality is automatically satisfied.
Going right and then down and using the fact that
$\beta\in G_{fe},\gamma\beta^{\mone}\in G_f$ implies $\gamma\in G_f$,
the second equality of \eqref{eq11} is easily verified.
The claim follows.

As $U\cong V$, we have $\dim_\Bbbk V=1$. Using \eqref{eq00}, for any morphism $\boldsymbol{h}\in V$, we have
\begin{equation}\label{eq12}
\text{id}_{(A\nu(e),  \frac{1}{|G_{fe}|} (\chi(\gamma)\gamma)_{\gamma\in G_{fe}})}\circ \boldsymbol{h}= \boldsymbol{h}=\boldsymbol{h}\circ \text{id}_{(L_e, \varepsilon_e)}.
\end{equation}
Assume that $\boldsymbol{h}=(l_\alpha\alpha)_{\alpha\in G_e}$, where $l_\alpha\in\Bbbk$, and consider the diagram
\begin{displaymath}
\xymatrix{L_e\ar[rrrr]^{ \frac{1}{|G_{e}|} (\beta)_{\beta\in G_{e}}}
\ar[d]_{(l_{\alpha}\alpha)_{\alpha\in G_e} }&&&& \displaystyle\bigoplus_{\beta}
{}^{\beta}(L_e)^{\beta} \ar[d]^{(l_{\delta\beta^{\mone}}\delta\beta^{\mone})_{\delta\beta^{\mone}\in G_{e}}}\\
\displaystyle\bigoplus_{\alpha}{}^{\alpha}A\nu(e)^{\alpha}\ar[rrrr]^{\frac{1}{|G_{fe}|} (\chi(\delta\alpha^{\mone})\delta\alpha^{\mone})_{\delta\alpha^{\mone}\in G_{fe}}} &&&&  \displaystyle\bigoplus_{\delta}{}^{\delta}A\nu(e)^{\delta}.}
\end{displaymath}
The morphism $\boldsymbol{h}\circ \text{id}_{(L_e, \varepsilon_e)}$ coincides with the path going to the
right and then down. Note that $\beta\in G_e,\delta\beta^{\mone}\in G_e$ forces $\delta\in G_e$.
Then the $\delta$-component of this composition is given by
$$ e\mapsto
\begin{cases}
\frac{1}{|G_{e}|}\Big(\displaystyle\sum_{\beta\in G_e}l_{\delta\beta^{\mone}}\Big)
\delta(e),& \text{ if } \delta \in G_{e};\\
0, & \text{ otherwise.}
\end{cases}$$
By re-indexing, the second equality of \eqref{eq12} shows that, for all $\delta\in G_e$,
we have $\frac{1}{|G_{e}|}\displaystyle\sum_{\sigma\in G_e}l_{\sigma}=l_{\delta}$, and thus
$l_{\delta}=l_{\delta'}$, for all $\delta,\delta'\in G_e$.
Therefore we have $\boldsymbol{h}=(l\alpha)_{\alpha\in G_e}$, where $l\in \Bbbk$, and the second equality holds.
Due to $\dim_\Bbbk V=1$, the first equality of \eqref{eq12} should also hold for any $l\in \Bbbk^{\ast}$.
Going down and then to the right, the $\delta$-component of
$\text{id}_{(A\nu(e),  \frac{1}{|G_{fe}|} (\chi(\gamma)\gamma)_{\gamma\in G_{fe}})}\circ \boldsymbol{h}$ is
zero unless $\delta\in G_e$,
in which case $e$ is sent to
\begin{displaymath}
\frac{1}{|G_{fe}|}\Big(\displaystyle\sum_{\alpha\in\delta G_{fe}}\chi(\delta\alpha^{\mone})\Big)l\delta(e)=\frac{1}{|G_{fe}|}\Big(\displaystyle\sum_{\varphi\in G_{fe}}\chi(\varphi)\Big)l\delta(e).
\end{displaymath}
The first equality implies that $\frac{1}{|G_{fe}|}\Big(\displaystyle\sum_{\varphi\in G_{fe}}\chi(\varphi)\Big)=1$.
By multiplying any $\chi(\gamma)$, where $\gamma\in G_{fe}$, to both side of the latter, we obtain
$$\chi(\gamma)=\frac{1}{|G_{fe}|}\Big(\displaystyle\sum_{\varphi\in G_{fe}}\chi(\gamma\varphi)\Big)=\frac{1}{|G_{fe}|}\Big(\displaystyle\sum_{\psi\in G_{fe}}\chi(\psi)\Big)=1.$$
Therefore $\chi=\varepsilon_{\nu(e)f}$ and the proof is complete.
\end{proof}

\begin{proposition}\label{prop64}
In $\cG_B$, we have adjunctions $((A_i, \tilde{\pi}_{\chi}),(A_i, \tilde{\pi}_{\chi^{\mone}}))$,
for each $\chi\in\hat{G}$ and $i=1,\dots, n$. Similarly, we have adjunction
$((\Bbbk, \tilde{\pi}_{\chi}),(\Bbbk, \tilde{\pi}_{\chi^{\mone}}))$, for each $\chi\in\hat{G}$.
\end{proposition}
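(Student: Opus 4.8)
The plan is to exploit that the $1$-morphisms in question are \emph{invertible} under $\otimes_{\tilde{\mathcal{Y}}}$, so that the asserted adjunctions are simply those of an adjoint equivalence between a $1$-morphism and its inverse. First I would note that Proposition~\ref{propmultid} applies verbatim to each indecomposable factor of $B$: since every $\varphi\in G$ preserves $A_i$ and fixes the factor $\Bbbk$, we have $G_{A_i}=G=G_{\Bbbk}$, and the very same computation with the morphisms $\boldsymbol{f}$ and $\boldsymbol{g}$ constructed there yields
\begin{displaymath}
(A_i,\tilde{\pi}_{\chi})\otimes_{\tilde{\mathcal{Y}}}(A_i,\tilde{\pi}_{\chi^{\mone}})\cong(A_i,\tilde{\pi}_{1_{\hat{G}}})=\mathbbm{1}_{\mathtt{i}}\cong(A_i,\tilde{\pi}_{\chi^{\mone}})\otimes_{\tilde{\mathcal{Y}}}(A_i,\tilde{\pi}_{\chi}),
\end{displaymath}
and likewise for $\Bbbk$. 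Thus $(A_i,\tilde{\pi}_{\chi})$ is an invertible $1$-morphism whose two-sided inverse is $(A_i,\tilde{\pi}_{\chi^{\mone}})$.

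Next I would produce the adjunction data directly from the isomorphisms in the proof of Proposition~\ref{propmultid}. Applying that proposition to the ordered pair of characters $(\chi^{\mone},\chi)$, the morphism $\boldsymbol{f}$ there specializes to an isomorphism $\eta\colon \mathbbm{1}_{\mathtt{i}}\to (A_i,\tilde{\pi}_{\chi^{\mone}})\otimes_{\tilde{\mathcal{Y}}}(A_i,\tilde{\pi}_{\chi})$, which I take as the unit; applying it instead to $(\chi,\chi^{\mone})$, the morphism $\boldsymbol{g}$ specializes to an isomorphism $\epsilon\colon (A_i,\tilde{\pi}_{\chi})\otimes_{\tilde{\mathcal{Y}}}(A_i,\tilde{\pi}_{\chi^{\mone}})\to \mathbbm{1}_{\mathtt{i}}$, which I take as the counit. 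Both are isomorphisms built from the scalars $\tfrac{\chi(\sigma)}{|G|}$ carried by the regular bimodule $A_i$.

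It then remains to verify the two triangle identities, e.g. that the composite $(A_i,\tilde{\pi}_{\chi})\xrightarrow{\,\mathrm{id}\otimes\eta\,}(A_i,\tilde{\pi}_{\chi})\otimes(A_i,\tilde{\pi}_{\chi^{\mone}})\otimes(A_i,\tilde{\pi}_{\chi})\xrightarrow{\,\epsilon\otimes\mathrm{id}\,}(A_i,\tilde{\pi}_{\chi})$ equals the identity, and symmetrically for the inverse. Because every object involved is the regular bimodule $A_i$ carrying a character idempotent, the only associativity and interchange isomorphisms needed are the canonical ones $A_i\otimes_{A_i}A_i\cong A_i$, so the check reduces to bookkeeping of characters: the scalar collected along each zigzag is of the form $\tfrac{1}{|G|}\sum_{\sigma\in G}\chi(\sigma)\chi^{\mone}(\sigma)=1$, exactly cancelling the normalising factors in $\eta$ and $\epsilon$. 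This is the only genuine computation and is the analogue of, but far simpler than, the zigzag verifications in Proposition~\ref{prop61}, since no Nakayama automorphism, trace form $\mathbf{t}$ or multiplication map $\mathbf{m}$ intervenes. I expect the main, and only minor, obstacle to be keeping the two tensor conventions of Lemma~\ref{lem3} and Remark~\ref{rem3.5} consistent when forming $\epsilon\otimes\mathrm{id}$ and $\mathrm{id}\otimes\eta$. Alternatively, one may bypass the computation entirely and invoke the standard fact that an invertible $1$-morphism in any bicategory is part of an adjoint equivalence with its inverse, rescaling $\epsilon$ if necessary.

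Finally, the statement for $\Bbbk$ is handled by the identical argument: $\Bbbk$ is an indecomposable factor of $B$ on which $G$ acts trivially, so $G_{\Bbbk}=G$, the isomorphisms of Proposition~\ref{propmultid} again give $(\Bbbk,\tilde{\pi}_{\chi})\otimes_{\tilde{\mathcal{Y}}}(\Bbbk,\tilde{\pi}_{\chi^{\mone}})\cong(\Bbbk,\tilde{\pi}_{1_{\hat{G}}})$ in both orders, and the same unit, counit and zigzag computation apply verbatim.
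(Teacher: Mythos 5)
Your proposal is correct and follows essentially the same route as the paper: both reduce, via Proposition~\ref{propmultid}, to the observation that $(A_i,\tilde{\pi}_{\chi})$ and $(A_i,\tilde{\pi}_{\chi^{\mone}})$ are mutually inverse, with the identity $1$-morphism $(A_i,\tilde{\pi}_{1_{\hat{G}}})$ as their product in either order. The paper's proof is simply terser, declaring the unit and counit to be identities under these identifications and the claim immediate, whereas you spell out the zigzag scalar check and note the standard adjoint-equivalence fallback.
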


\begin{proof}
By Proposition \ref{propmultid}, we have 
\begin{displaymath}
(A_i, \tilde{\pi}_{\chi})\otimes_{\tilde{\mathcal{Y}}}(A_i, \tilde{\pi}_{\chi^{\mone}}) 
\cong  (A_i, \tilde{\pi}_{1_{\hat{G}}})\cong 
(A_i, \tilde{\pi}_{\chi^{\mone}})\otimes_{\tilde{\mathcal{Y}}}(A_i, \tilde{\pi}_{\chi})
\end{displaymath}
and similarly for $\Bbbk$. Both unit and counit are then just identities and the claim is immediate.
\end{proof}

\begin{proposition}\label{prop65}
If $A$ is self-injective, then the $2$-categories $\cG_{A}$ and $\cG_B$ are weakly fiat.
If $A$ is weakly symmetric, then both $\cG_{A}$ and $\cG_B$ are fiat.
\end{proposition}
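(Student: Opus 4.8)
The plan is to verify the three defining properties of a weakly fiat $2$-category from \cite[Subsection~2.5]{MM6}, namely finitariness, the existence of a weak antiautomorphism $(\,\,)^{\star}$ of finite order, and the existence of adjunction morphisms, and then to show that $(\,\,)^{\star}$ becomes involutive precisely when $A$ is weakly symmetric, giving fiatness in the sense of \cite[Subsection~2.4]{MM1}. Finitariness is immediate from the construction in Subsection~\ref{s3.3}: there are finitely many objects, each morphism category is by definition the additive closure of finitely many indecomposable objects of $\tilde{\mathcal{X}}$ (resp.\ $\tilde{\mathcal{Y}}$) with finite-dimensional, hence finitary, $\mathrm{Hom}$-spaces, the compositions are $\Bbbk$-bilinear and additive, and by Lemma~\ref{lem1} the identity $1$-morphism $(A_i,\tilde{\pi}_{1_{\hat{G}}})\otimes^{(l)}{}_-$ has local endomorphism algebra and is therefore indecomposable.

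First I would define $(\,\,)^{\star}$ on indecomposable $1$-morphisms by sending each one to the adjoint produced in the preceding propositions. On the diagonal cells $\mathcal{J}_{\mathtt{i}}$ I set $(A_i,\tilde{\pi}_{\chi})^{\star}:=(A_i,\tilde{\pi}_{\chi^{\mone}})$ (and likewise $(\Bbbk,\tilde{\pi}_{\chi})^{\star}:=(\Bbbk,\tilde{\pi}_{\chi^{\mone}})$ in $\cG_B$), using Proposition~\ref{prop64}; on the projective cell $\mathcal{J}_0$ I set $(Af\otimes_{\Bbbk}eA,\varepsilon_{fe})^{\star}:=(A\nu(e)\otimes_{\Bbbk}fA,\varepsilon_{\nu(e)f})$, using Proposition~\ref{prop63}; and in $\cG_B$ the mixed $1$-morphisms linking the $A$-objects with the $\Bbbk$-object are handled by Proposition~\ref{prop61}, which supplies both $(Ae,\varepsilon_e)^{\star}=(eA,\varepsilon_e)$ and $(eA,\varepsilon_e)^{\star}=(A\nu(e),\varepsilon_{\nu(e)})$. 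I would extend $(\,\,)^{\star}$ additively to all $1$-morphisms and define it on $2$-morphisms by taking mates with respect to these adjunctions; the units and counits constructed in Propositions~\ref{prop61}, \ref{prop63} and \ref{prop64} are then exactly the required adjunction morphisms, so that each indecomposable $1$-morphism acquires a two-sided adjoint lying again in $\cG_A$ (resp.\ $\cG_B$).

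It then remains to control the order of $(\,\,)^{\star}$. Applying it twice to a generator of $\mathcal{J}_0$ gives $\big((Af\otimes_{\Bbbk}eA,\varepsilon_{fe})^{\star}\big)^{\star}\cong(A\nu(f)\otimes_{\Bbbk}\nu(e)A,\varepsilon_{\nu(f)\nu(e)})$, so that the square of $(\,\,)^{\star}$ is the operation of twisting a $1$-morphism by the Nakayama automorphism $\nu$, while it fixes each diagonal $1$-morphism $(A_i,\tilde{\pi}_{\chi})$. Since $A$ is self-injective, $\nu$ permutes the finite set $\mathtt{E}$ and hence has finite order $m$; as $\nu$ preserves each indecomposable factor $A_i$, the $2m$-th power of $(\,\,)^{\star}$ is isomorphic to the identity $2$-functor, so $(\,\,)^{\star}$ is a weak antiautomorphism of finite order and $\cG_A$, $\cG_B$ are weakly fiat. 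If moreover $A$ is weakly symmetric, the Nakayama permutation is trivial, i.e.\ $\nu(e)=e$ for all $e\in\mathtt{E}$, whence $\big((Af\otimes_{\Bbbk}eA,\varepsilon_{fe})^{\star}\big)^{\star}\cong(Af\otimes_{\Bbbk}eA,\varepsilon_{fe})$; thus $(\,\,)^{\star}$ is involutive and both $2$-categories are fiat.

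The main obstacle I anticipate is not any single calculation but the coherent verification that $(\,\,)^{\star}$ is a genuine weak antiautomorphism of $2$-categories: one must check that the mates defined on $2$-morphisms are compatible with both horizontal and vertical composition, that $(\mathrm{F}\circ\mathrm{G})^{\star}\cong\mathrm{G}^{\star}\circ\mathrm{F}^{\star}$ naturally, and that the units and counits harvested from the three separate propositions assemble into adjunction data satisfying the zig-zag identities across composite $1$-morphisms (this is where the bookkeeping of the character twists $\varepsilon_{\chi}$ and the scalars $\tfrac{1}{|G_e|}$, $\tfrac{1}{|G_{fe}|}$ must be reconciled). Since the delicate adjunction identities have already been discharged in Propositions~\ref{prop61}, \ref{prop63} and \ref{prop64}, and since the analogous assembly for the $2$-category of all projective bimodules is carried out in \cite[Subsection~7.3]{MM1} and \cite[Subsection~2.8]{MM6}, I expect this step to reduce to routine, if lengthy, diagram checks, with the identification of the square of $(\,\,)^{\star}$ with the Nakayama twist being the one genuinely new structural input that separates the fiat from the merely weakly fiat case.
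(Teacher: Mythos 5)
Your overall strategy coincides with the paper's: harvest the adjunctions of Propositions~\ref{prop61}, \ref{prop63} and \ref{prop64} into a weak antiautomorphism $(\,\,)^{\star}$, and note that its square is the Nakayama twist, which vanishes exactly in the weakly symmetric case. However, there is a genuine gap in your coverage of the indecomposable $1$-morphisms. By Lemma~\ref{lem1}\eqref{lem1.2} and Proposition~\ref{prop7}\eqref{prop7.2}, the indecomposable $1$-morphisms of $\cG_A$ in the cell $\mathcal{J}_0$ are the objects $(Af\otimes_{\Bbbk}eA,\varepsilon_{\chi})$ for \emph{all} characters $\chi\in\hat{G}_{fe}$, and likewise the mixed indecomposables of $\cG_B$ are $(Ae,\varepsilon_{\chi})$ and $(eA,\varepsilon_{\chi})$ for all $\chi\in\hat{G}_{e}$. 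Propositions~\ref{prop61} and \ref{prop63} treat only the idempotents $\varepsilon_{e}$, $\varepsilon_{fe}$ attached to the \emph{trivial} character (this is how $\varepsilon_e$ is defined at the start of Subsection~\ref{s3.6}). Whenever $G_{fe}\neq\{1\}$ (for instance, when some nontrivial $\varphi\in G$ fixes both $e$ and $f$), the $1$-morphisms $(Af\otimes_{\Bbbk}eA,\varepsilon_{\chi})$ with $\chi$ nontrivial are indecomposable, pairwise non-isomorphic, and absent from your list; your ``extend additively'' step cannot reach them, since an additive extension only defines $(\,\,)^{\star}$ on direct sums of objects where it is already defined. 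For these $1$-morphisms you have neither defined $(\,\,)^{\star}$ nor produced adjunction data, so weak fiatness (and, a fortiori, fiatness) is not established.

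The missing idea is precisely the one substantive step of the paper's proof: by Proposition~\ref{propmultid2}, every such indecomposable factors as a tensor product of treated ones, namely
$(Af\otimes_{\Bbbk}eA,\varepsilon_{\chi})\cong(Af\otimes_{\Bbbk}eA,\varepsilon_{fe})\otimes_{\tilde{\mathcal{X}}}(A,\tilde{\pi}_{\zeta})$,
where $\zeta\in\hat{G}$ is any preimage of $\chi$ under the restriction surjection $\hat{G}\to\hat{G}_{fe}$; since an adjoint of a composite is the composite of the adjoints in reverse order, Propositions~\ref{prop61}, \ref{prop63} and \ref{prop64} then supply adjunctions for \emph{all} indecomposable $1$-morphisms, with adjoints again lying in $\cG_A$ (resp.\ $\cG_B$) by Proposition~\ref{propmultid2} once more. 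With this insertion your argument goes through, and it even makes explicit two points the paper leaves implicit: that $((\,\,)^{\star})^2$ acts as the Nakayama twist and hence has finite order because $\nu$ permutes the finite set $\mathtt{E}$, and the verification of finitariness of $\cG_A$ and $\cG_B$.
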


\begin{proof}
Assume $A$ is self-injective. Proposition \ref{propmultid2} shows that any indecomposable $1$-mor\-phism
can be written as a product of those treated in Propositions \ref{prop61}, \ref{prop63} and \ref{prop64}.
This implies that $\cG_{A}$ and $\cG_B$ are weakly fiat. If $A$ is weakly symmetric, then $\nu$ is the identity,
and all adjunctions given in Propositions \ref{prop61}, \ref{prop63} and \ref{prop64} become (weakly)
involutive, proving fiatness.
\end{proof}

\subsection{Simple transitive $2$-representations of $\cG_{A}$}\label{s3.5}
Now we can formulate our first main result. We assume that $A$ is weakly symmetric, basic and there is a fixed complete $G$-invariant
set $\mathtt{E}$ of primitive idempotents, so that $\cG_{A}$ and $\cG_{B}$ are fiat.

\begin{theorem}\label{thm11}
Under the above assumptions, for every two-sided cell $\mathcal{J}$ in $\cG_{A}$, there is a natural bijection between
equivalence classes of simple transitive $2$-representations of $\cG_{A}$ with apex $\mathcal{J}$
and pairs $(K,\omega)$, where $K$ is a subgroup of $G$ and $\omega\in H^2(K,\Bbbk^*)$.
\end{theorem}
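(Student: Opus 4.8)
The plan is to classify cell by cell, combining the reduction technique of \cite{MMMZ} with Ostrik's classification of module categories over $\mathrm{Rep}(G)$ from \cite{Os}. By Proposition~\ref{prop7} the $2$-category $\cG_A$ has exactly $n+1$ two-sided cells: the $n$ group-like cells $\mathcal{J}_{\mathtt{i}}$, whose $1$-morphisms are the invertible $(\mathbbm{1}_{\mathtt{i}},\tilde{\pi}_{\chi})$ for $\chi\in\hat{G}$, and the big cell $\mathcal{J}_0$ of projective symmetric bimodules. Since $A$ is weakly symmetric, $\cG_A$ is fiat by Proposition~\ref{prop65}, so every cell contains Duflo involutions and the general theory of \cite{MM5,MMMZ} is available. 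Fixing an apex $\mathcal{J}$, I would first replace $\cG_A$ by its $\mathcal{J}$-simple quotient, which leaves the simple transitive $2$-representations with apex $\mathcal{J}$ unchanged and brings us into the setting where the reduction of \cite{MMMZ} applies.

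The technical core is to show that the fusion category $\mathcal{F}_{\mathcal{J}}$ attached to the diagonal $\mathcal{H}$-cell of $\mathcal{J}$ is equivalent to $\mathrm{Rep}(G)$ for \emph{every} cell, which is what produces the uniform answer. For the group-like cells this is immediate: the $(\mathbbm{1}_{\mathtt{i}},\tilde{\pi}_{\chi})$ are invertible, form a single $\mathcal{H}$-cell, and compose by the group law of $\hat{G}$ by Proposition~\ref{propmultid}, so $\mathcal{F}_{\mathcal{J}_{\mathtt{i}}}\simeq\mathrm{Vec}_{\hat{G}}\simeq\mathrm{Rep}(G)$ as $G$ is finite abelian. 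For the big cell I would first determine the cell structure, showing that left (resp.\ right) cells of $\mathcal{J}_0$ are pinned down by the $G$-orbit of the right (resp.\ left) tensor index, so that the diagonal $\mathcal{H}$-cell consists of all $(Ae_i\otimes_{\Bbbk}e_jA,\varepsilon_{\chi})$ with $i,j$ in one fixed orbit $O$. A Burnside-type count then shows that, although the stabilizer $H=G_e$ of an idempotent may be a proper subgroup, this $\mathcal{H}$-cell contains exactly $|G|$ indecomposables, naturally indexed by $(G/H)\times\hat{H}$; tracking via Propositions~\ref{propmultid} and \ref{propmultid2} how the tensor index shifts and how the character idempotents $\varepsilon_{\chi}$ fuse and transform under the $(A,\tilde{\pi}_{\zeta})$, these organize into the group $\hat{G}$ under composition, so $\mathcal{F}_{\mathcal{J}_0}\simeq\mathrm{Vec}_{\hat{G}}\simeq\mathrm{Rep}(G)$ as well. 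The adjunctions of Propositions~\ref{prop61}, \ref{prop63} and \ref{prop64} supply the rigidity that makes $\mathcal{F}_{\mathcal{J}}$ a genuine fusion category.

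With this identification, the reduction of \cite{MMMZ} yields a natural bijection between equivalence classes of simple transitive $2$-representations of $\cG_A$ with apex $\mathcal{J}$ and indecomposable module categories over $\mathcal{F}_{\mathcal{J}}\simeq\mathrm{Rep}(G)\simeq\mathrm{Vec}_{\hat{G}}$. I would then invoke Ostrik's theorem \cite{Os}: since $\mathrm{Vec}_{\hat{G}}$ is pointed with trivial associator, its indecomposable module categories are classified by pairs $(K,\omega)$ with $K\leq\hat{G}$ and $\omega\in H^2(K,\Bbbk^*)$. Transporting along a fixed isomorphism $\hat{G}\cong G$ rewrites the subgroup datum as a subgroup of $G$, which is exactly the asserted parametrization. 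Composing the two natural bijections, and noting that the construction is uniform across all $n+1$ cells, completes the proof.

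The step I expect to be the main obstacle is the identification $\mathcal{F}_{\mathcal{J}_0}\simeq\mathrm{Rep}(G)$ for the big cell. The difficulty is that each individual object of the diagonal $\mathcal{H}$-cell only sees the stabilizer subgroup $G_e$, so I must verify that the $G$-orbit structure on idempotents compensates exactly to recover the full group $\hat{G}$, that the resulting fusion rule is the group law of $\hat{G}$ rather than a smaller or twisted variant, and that the associator is trivial so that no nonzero class in $H^3$ is introduced. This relies on the careful character-and-orbit bookkeeping of Propositions~\ref{propmultid2} and \ref{prop63} and Lemma~\ref{lem62}, together with the weak symmetry of $A$ (so that the Nakayama bijection $\nu$ is trivial and the cell is self-dual); once these are in place, the appeals to \cite{MMMZ} and \cite{Os} are formal.
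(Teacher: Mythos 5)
Your handling of the group-like cells $\mathcal{J}_{\mathtt{i}}$ is correct and coincides with the paper's: Proposition~\ref{propmultid} identifies the relevant quotient with $\mathrm{Rep}(G)$ and \cite[Theorem~2]{Os} finishes. The gap is exactly at the step you flag as the main obstacle, and it is not merely hard --- it is false. The $1$-morphisms in $\mathcal{J}_0$ are projective bimodules, and their composition in $\cG_A$ is
\begin{displaymath}
(Ae_i\otimes_{\Bbbk}e_jA)\otimes_{\mathcal{X}}(Ae_{i'}\otimes_{\Bbbk}e_{j'}A)\cong
\bigoplus_{\varphi\in G}\left(Ae_i\otimes_{\Bbbk}\varphi^{\mone}(e_{j'})A\right)^{\oplus\dim e_jA\varphi^{\mone}(e_{i'})},
\end{displaymath}
so products of elements of the diagonal $\mathcal{H}$-cell spread over many indecomposables with multiplicities given by dimensions of Hom-spaces; the fusion rule is never the multiplicity-free group law of $\hat{G}$. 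The paper's own cyclic-quiver example (the class of examples following Theorem~\ref{thm11}) makes this concrete: there the diagonal $\mathcal{H}$-cell is $\{\mathrm{F}_1,\dots,\mathrm{F}_n\}$ and $\mathrm{F}_i\mathrm{F}_j\cong\mathrm{F}_1\oplus\cdots\oplus\mathrm{F}_n$ for all $i,j$; Proposition~\ref{prop71} exhibits the same non-group-like shape $(\mathrm{F}\oplus\mathrm{G})^{\oplus n}$ in the two-element case. Conceptually, this had to fail: since no $A_i$ is simple, $\mathbbm{1}_{\mathtt{i}}$ does not lie in $\mathcal{J}_0$, and (as noted in the proof of Proposition~\ref{prop7}) tensoring projective bimodules never produces the regular bimodule as a summand, so the unit never occurs in any product of elements of $\mathcal{H}_2$ --- whereas in $\mathrm{Rep}(G)$ the unit is a summand of $V\otimes V^{*}$ for every simple $V$. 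Hence the reduction $2$-subcategory $\cA_{\mathcal{H}_2}$ is not biequivalent to $\mathrm{Rep}(G)$. Relatedly, you misquote the reduction theorem: \cite[Theorem~15]{MMMZ} gives a bijection between simple transitive $2$-representations of $\cG_A$ with apex $\mathcal{J}_0$ and simple transitive $2$-representations of the $2$-subcategory $\cA_{\mathcal{H}_2}$, not module categories over a fusion category; the latter reading is available only when $\cA_{\mathcal{H}}$ itself is of the form $\mathrm{Rep}(G)$, which is precisely what fails here. (Your orbit and character bookkeeping --- left/right cells determined by $G$-orbits of the tensor indices, and $|G|$ elements in the diagonal $\mathcal{H}$-cell --- is correct, but it cannot deliver the conclusion you want from it.)

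The missing idea, which is the paper's actual trick, is to enlarge the algebra: set $B=A\times\Bbbk$ and embed $\cG_A$ into $\cG_B$ as a $1$- and $2$-full $2$-subcategory. At the new object $\mathtt{j}$ the identity bimodule $\Bbbk=\Bbbk\otimes_{\Bbbk}\Bbbk$ \emph{is} projective, so the $\mathcal{H}$-class $\mathcal{H}_1$ of the twisted identities $(\Bbbk,\tilde{\pi}_{\chi})$ lies inside the projective cell $\mathcal{J}_0^{(B)}$, and for this particular $\mathcal{H}$-cell the subcategory $\cA_{\mathcal{H}_1}$ really is biequivalent to $\mathrm{Rep}(G)$ by Proposition~\ref{propmultid}, so \cite[Theorem~2]{Os} classifies its simple transitive $2$-representations by pairs $(K,\omega)$. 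One then applies \cite[Theorem~15]{MMMZ} repeatedly, never computing the fusion of $\mathcal{H}_2$: in $\cG_B$ it links $\cA_{\mathcal{H}_1}$ with $2$-representations of $\cG_B$ of apex $\mathcal{J}_0^{(B)}$, and also links these with $\cA_{\mathcal{H}_2}$ for any self-dual diagonal $\mathcal{H}$-cell $\mathcal{H}_2\subset\mathcal{J}_0$ of $\cG_A$ (which is simultaneously an $\mathcal{H}$-cell of $\cG_B$); in $\cG_A$ it links $\cA_{\mathcal{H}_2}$ with $2$-representations of $\cG_A$ of apex $\mathcal{J}_0$. Chaining these bijections through the common subcategory $\cA_{\mathcal{H}_2}$ yields the theorem. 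So the repair is not a refinement of your fusion computation but a structurally different argument: one must manufacture an $\mathcal{H}$-cell that genuinely contains invertible $1$-morphisms, and that exists only after adjoining the simple factor $\Bbbk$.
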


\begin{proof}
For $\mathcal{J}=\mathcal{J}_{\mathtt{i}}$, where $i=1,2,\dots,n$, 
Proposition~\ref{propmultid} shows that the $\mathcal{J}$-simple quotient of $\cG_{A}$
is biequivalent to the $2$-category $\mathrm{Rep}(G)$ from \cite{Os}.
Therefore the statement follows from \cite[Theorem~2]{Os}.

For $\mathcal{J}=\mathcal{J}_0$, consider $B=A\times\Bbbk$. Then we can realize $\cG_{A}$ as a
both $1$- and $2$-full subcategory of $\cG_{B}$ in the obvious way. Let $\mathtt{j}$ denote the
object corresponding to the additional factor $\Bbbk$. Let $\mathcal{H}_1$ be the
$\mathcal{H}$-class in $\cG_{B}$ containing the identity $1$-morphism on $\Bbbk$. By
Proposition~\ref{propmultid}, $\mathcal{H}_1$ contains $|G|$ many $1$-morphisms, moreover,
$\mathcal{H}_1$ is contained in $\mathcal{J}_0^{(B)}$, the two-sided cell of projective bimodules in
$\cG_{B}$.
Note that the $1$- and $2$-full $2$-subcategory $\cA_{\mathcal{H}_1}$ of $\cG_{B}$ with object $\mathtt{j}$ is
biequivalent to the $2$-category $\mathrm{Rep}(G)$ as above. Hence, by \cite[Theorem~2]{Os},
there is a natural bijection between equivalence classes of simple transitive $2$-representations of
$\cA_{\mathcal{H}_1}$ with apex $\mathcal{H}_1$ and pairs $(K,\omega)$ as in the theorem. By \cite[Theorem~15]{MMMZ},
there is a bijection  between equivalence classes of simple transitive $2$-representations of
$\cA_{\mathcal{H}_1}$ and equivalence classes of simple transitive $2$-representations of $\cG_{B}$
with apex $\mathcal{J}_0^{(B)}$.

Let $\mathcal{H}_2$ be any self-dual $\mathcal{H}$-class in $\cG_{A}$ contained in $\mathcal{J}_0$.
Notice that this is also a  self-dual $\mathcal{H}$-class in $\cG_{B}$ contained in $\mathcal{J}_0^{(B)}$.
Let $\cA_{\mathcal{H}_2}$ be the corresponding $1$- and $2$-full $2$-subcategory of $\cG_{A}$
(and of $\cG_{B}$), cf. \cite[Subsection~4.2]{MMMZ}.  By \cite[Theorem~15]{MMMZ},
there is a bijection between equivalence classes of simple transitive $2$-representations of
$\cA_{\mathcal{H}_2}$ and equivalence classes of simple transitive $2$-representations of $\cG_{B}$
with apex $\mathcal{J}_0^{(B)}$, and also of $\cG_{A}$
with apex $\mathcal{J}_0$. The claim follows.
\end{proof}

To prove Theorem~\ref{thm11}, one could alternatively use \cite[Corollary~12]{MMMT}.

\begin{remark}
{\em
An analogue of Theorem~\ref{thm11} is also true in the weakly fiat case, that is when  $A$
is just self-injective but not necessarily weakly symmetric. However, the proof requires an adjustment
of the results of \cite[Theorem~15]{MMMZ} to the case when instead of one diagonal $\mathcal{H}$-cell
one considers a diagonal block which is stable under $\star$. One could carefully go through the proof
\cite[Theorem~15]{MMMZ} and check that everything works.
}
\end{remark}

\subsection{A class of examples}\label{s3.7}

Fix a positive integer $n>1$ and let $A$ be the quotient of the path algebra of the cyclic quiver
\begin{displaymath}
\xymatrix{
&&&1\ar[llld]_{\alpha_1}&&&\\
2\ar[rr]^{\alpha_2}&&3\ar[rr]^{\alpha_3}&&\dots\ar[rr]^{\alpha_{n-1}}&&n\ar[lllu]_{\alpha_{n}}
}
\end{displaymath}
modulo the ideal generated by all paths of length $n$. Now we let $G$ be the cyclic group of
order $n$ whose generator $\varphi$ acts on $A$ by sending $e_i$ to $e_{i+1}$ and 
$\alpha_i$ to $\alpha_{i+1}$ (where we compute indices modulo $n$). 

For $i=1,2,\dots,n$, we denote by $\mathrm{F}_i$ the indecomposable $1$-morphism in $\cG_A$ 
corresponding to tensoring with $Ae_1\otimes_{\Bbbk}e_iA$ (we omit the idempotents since the 
action of $G$ is free). Then $(\mathrm{F}_i,\mathrm{F}_{n+1-i})$ is an adjoint pair 
(and, indeed, biadjoint), for each $i$. Hence $\cG_A$ is fiat.

Note that every subgroup of $G$ is cyclic and $H^2(\mathbb{Z}/k\mathbb{Z},\Bbbk^*)\cong 
\Bbbk^*/(\Bbbk^*)^k\cong \{e\}$ since $\Bbbk$ is algebraically closed. Therefore, 
simple transitive $2$-representations of $\cG_A$ are in bijection with divisors of $n$.
For $d\vert n$, the algebra underlying the simple transitive $2$-representations of $\cG_A$
corresponding to $d$ is the algebra $A^{\langle\varphi^{\frac{n}{d}}\rangle}$ with the
obvious action of $\cG_A$.

\section{Two-element $\mathcal{H}$-cells with no self-adjoint elements}\label{s7}

\subsection{Basic combinatorics}\label{s7.1}

\begin{proposition}\label{prop71}
Let $\cC$ be a fiat $2$-category such that
\begin{itemize}
\item $\cC$ has one object $\mathtt{i}$;
\item $\cC$ has two two-sided cells, each of which is also a right cell and a left cell, one being $\{\mathbbm{1}_{\mathtt{i}}\}$
and the other one given by $\{\mathrm{F},\mathrm{G}\}$ with $\mathrm{F}\not\cong\mathrm{G}$;
\item $\mathrm{F}^{\star}\cong\mathrm{G}$.
\end{itemize}
Then there exists $n\in\mathbb{Z}_{>0}$ such that
\begin{equation}\label{eq7-1}
\mathrm{F}\mathrm{F}\cong \mathrm{F}\mathrm{G}\cong \mathrm{G}\mathrm{F}
\cong \mathrm{G}\mathrm{G}\cong (\mathrm{F}\oplus \mathrm{G})^{\oplus n}.
\end{equation}
\end{proposition}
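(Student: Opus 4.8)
The statement concerns a fiat $2$-category $\cC$ whose non-identity $1$-morphisms $\mathrm{F}, \mathrm{G}$ form a single two-sided cell that is simultaneously a left cell and a right cell, with $\mathrm{F}^\star\cong\mathrm{G}$. The plan is to exploit the strong cell constraints to pin down the four products $\mathrm{F}\mathrm{F}$, $\mathrm{F}\mathrm{G}$, $\mathrm{G}\mathrm{F}$, $\mathrm{G}\mathrm{G}$ and show they all equal the same multiple of $\mathrm{F}\oplus\mathrm{G}$. First I would record what the cell structure forces: since $\{\mathrm{F},\mathrm{G}\}$ is a single two-sided cell not containing $\mathbbm{1}_\mathtt{i}$, every product of two of these $1$-morphisms decomposes (up to isomorphism) into a direct sum of copies of $\mathrm{F}$ and $\mathrm{G}$ only --- no copy of $\mathbbm{1}_\mathtt{i}$ can occur as a summand, because $\mathbbm{1}_\mathtt{i} \geq_J \mathrm{F}$ would collapse the two-sided cells, contradicting that $\{\mathbbm{1}_\mathtt{i}\}$ and $\{\mathrm{F},\mathrm{G}\}$ are distinct. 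So each product has the form $\mathrm{F}^{\oplus a}\oplus \mathrm{G}^{\oplus b}$ for nonnegative integers.

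The core of the argument is to use adjunction (biadjointness coming from fiatness together with $\mathrm{F}^\star\cong\mathrm{G}$) to compute the multiplicities via $\mathrm{Hom}$-spaces, and then use that $\{\mathrm{F},\mathrm{G}\}$ is a single \emph{left} and \emph{right} cell to equate them. Concretely, writing multiplicities as dimensions of spaces of $2$-morphisms, I would compute, for instance, the multiplicity of $\mathrm{F}$ in $\mathrm{F}\mathrm{F}$ as $\dim\mathrm{Hom}(\mathrm{F},\mathrm{F}\mathrm{F})$, which by adjunction $(\mathrm{F},\mathrm{G})=(\mathrm{F},\mathrm{F}^\star)$ equals $\dim\mathrm{Hom}(\mathrm{G}\mathrm{F},\mathrm{F})=\dim\mathrm{Hom}(\mathrm{F},\mathrm{F}^\star\mathrm{F})$ and similar rewritings. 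The key point is that the same number --- call it $n$ --- governs every one of the multiplicities: the left-cell condition forces the $\mathrm{F}$-row and $\mathrm{G}$-row of the ``fusion'' to be balanced, while the right-cell condition balances the columns, and $\mathrm{F}^\star\cong\mathrm{G}$ ties the two generators together symmetrically. Since $\mathrm{F}$ and $\mathrm{G}$ play completely symmetric roles under $(\,\,)^\star$, all four products must carry the same multiplicity of $\mathrm{F}$ and the same multiplicity of $\mathrm{G}$; and the single-left-cell/single-right-cell hypotheses force the multiplicity of $\mathrm{F}$ to equal the multiplicity of $\mathrm{G}$ in each product, yielding $(\mathrm{F}\oplus\mathrm{G})^{\oplus n}$ uniformly.

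The main obstacle, and the step requiring the most care, is establishing that the multiplicities of $\mathrm{F}$ and of $\mathrm{G}$ within a fixed product coincide --- that is, ruling out an asymmetric answer like $\mathrm{F}\mathrm{F}\cong \mathrm{F}^{\oplus a}\oplus\mathrm{G}^{\oplus b}$ with $a\neq b$. This is exactly where the hypothesis that $\{\mathrm{F},\mathrm{G}\}$ is a single left cell \emph{and} a single right cell does the work: if $\mathrm{F}$ appeared in $\mathrm{F}\mathrm{F}$ but $\mathrm{G}$ did not (or with different multiplicity), then the left-preorder relations among $\mathrm{F}$ and $\mathrm{G}$ would not be symmetric, contradicting their lying in one cell. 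I would make this precise by computing all sixteen relevant $\mathrm{Hom}$-dimensions via the two biadjunctions and checking that the adjunction isomorphisms identify them in pairs, so that a single integer $n:=\dim\mathrm{Hom}(\mathrm{F},\mathrm{F}\mathrm{G})$ suffices to express them all. Positivity of $n$ (i.e. $n>0$) follows because $\mathrm{F}\geq_J\mathrm{F}$ means some product of $\mathrm{F},\mathrm{G}$ must contain $\mathrm{F}$ as a summand, and the uniform structure then forces every product to be non-zero.
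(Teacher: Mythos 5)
Your proposal contains a genuine gap at exactly the step you identify as the crucial one. You claim that the hypothesis that $\{\mathrm{F},\mathrm{G}\}$ is a single left cell and a single right cell forces the multiplicities of $\mathrm{F}$ and $\mathrm{G}$ inside each product to coincide (``if $\mathrm{F}$ appeared in $\mathrm{F}\mathrm{F}$ but $\mathrm{G}$ did not \dots\ the left-preorder relations would not be symmetric''). This is false. Cell membership only gives \emph{positivity} of certain sums of multiplicities, not equality. Concretely, consider the configuration
$\mathrm{F}\mathrm{F}\cong \mathrm{F}^{\oplus x}$, $\mathrm{G}\mathrm{G}\cong \mathrm{G}^{\oplus x}$, $\mathrm{F}\mathrm{G}\cong \mathrm{F}^{\oplus b}\oplus\mathrm{G}^{\oplus b}$, $\mathrm{G}\mathrm{F}\cong \mathrm{F}^{\oplus c}\oplus\mathrm{G}^{\oplus c}$ with $b,c>0$. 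Here $\mathrm{F}\geq_L\mathrm{G}$ and $\mathrm{G}\geq_L\mathrm{F}$ (via $\mathrm{F}\mathrm{G}$ and $\mathrm{G}\mathrm{F}$), likewise for right cells, and all the $\star$-duality relations you invoke hold ($(\mathrm{F}\mathrm{G})^\star\cong\mathrm{F}\mathrm{G}$, $(\mathrm{F}\mathrm{F})^\star\cong\mathrm{G}\mathrm{G}$). So every constraint in your argument is satisfied, yet the conclusion fails. What kills this configuration is \emph{associativity} of horizontal composition, which your proof never uses: comparing $(\mathrm{F}\mathrm{G})\mathrm{G}\cong\mathrm{F}(\mathrm{G}\mathrm{G})$ forces $b^2+xb=xb$, hence $b=0$, a contradiction. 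This is precisely how the paper proceeds: it introduces the eight multiplicity variables, uses $\star$ to cut them down to four ($x,y,b,c$), uses the cell conditions only to get $y+b>0$ and $y+c>0$, and then runs a case analysis ($y=0$ versus $y>0$) in which associativity, applied to $(\mathrm{F}\mathrm{G})\mathrm{F}\cong\mathrm{F}(\mathrm{G}\mathrm{F})$ and $(\mathrm{F}\mathrm{G})\mathrm{G}\cong\mathrm{F}(\mathrm{G}\mathrm{G})$, yields $b=c=x=y=:n$. Without an associativity (or equivalent representation-theoretic) input, no amount of adjunction bookkeeping can close this gap.

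Two further, smaller problems. First, in a finitary $2$-category the morphism categories are not semisimple, so the multiplicity of an indecomposable $\mathrm{F}$ in an object $X$ is \emph{not} $\dim\mathrm{Hom}(\mathrm{F},X)$ (already $\dim\mathrm{End}(\mathrm{F})$ may exceed $1$); you would need to pass to the split Grothendieck group or work modulo the radical, so your ``sixteen Hom-dimensions'' do not directly compute the numbers you need. Second, your positivity argument is vacuous: $\mathrm{F}\geq_J\mathrm{F}$ holds trivially (take both tensor factors to be identities) and gives no information; positivity of $n$ comes from $\mathrm{G}\geq_L\mathrm{F}$, which forces $y+c>0$, combined with the case analysis above.
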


\begin{proof}
We have
\begin{gather*}
\mathrm{F}\mathrm{F}\cong \mathrm{F}^{\oplus a_1}\oplus \mathrm{G}^{\oplus a_2},\qquad
\mathrm{F}\mathrm{G}\cong \mathrm{F}^{\oplus b_1}\oplus \mathrm{G}^{\oplus b_2},\\
\mathrm{G}\mathrm{F}\cong \mathrm{F}^{\oplus c_1}\oplus \mathrm{G}^{\oplus c_2},\qquad
\mathrm{G}\mathrm{G}\cong \mathrm{F}^{\oplus d_1}\oplus \mathrm{G}^{\oplus d_2},
\end{gather*}
for some $a_1,a_2,b_1,b_2,c_1,c_2,d_1,d_2\in\mathbb{Z}_{\geq 0}$.

From $\mathrm{F}^{\star}\cong\mathrm{G}$, we see that $(\mathrm{F}\mathrm{G})^{\star}\cong\mathrm{F}\mathrm{G}$
and $(\mathrm{G}\mathrm{F})^{\star}\cong\mathrm{G}\mathrm{F}$. This implies $b_1=b_2=:b$
and $c_1=c_2=:c$. Furthermore, $(\mathrm{F}\mathrm{F})^{\star}\cong\mathrm{G}\mathrm{G}$, which implies $a_1=d_2=:x$ and $a_2=d_1=:y$.

As $\mathrm{G}$ is in the same left cell as  $\mathrm{F}$, we obtain $y+c>0$ and $y+b>0$.

{\bf Case 1: $y=0$.} In this case we have $c,b>0$ by the above, and
\begin{displaymath}
\mathrm{F}\mathrm{F}\cong \mathrm{F}^{\oplus x},\quad
\mathrm{F}\mathrm{G}\cong \mathrm{F}^{\oplus b}\oplus \mathrm{G}^{\oplus b},\quad
\mathrm{G}\mathrm{F}\cong \mathrm{F}^{\oplus c}\oplus \mathrm{G}^{\oplus c},\quad
\mathrm{G}\mathrm{G}\cong \mathrm{G}^{\oplus x}.
\end{displaymath}
We use this to compute both sides of the isomorphism
$(\mathrm{F}\mathrm{G})\mathrm{G}\cong\mathrm{F}(\mathrm{G}\mathrm{G})$. This
yields  $b^2=xb$ (by comparing the coefficients as $\mathrm{F}$) and $b^2+xb=xb$
(by comparing the coefficients as $\mathrm{G}$). Hence $b=0$, a contradiction.
Therefore this case cannot occur.

{\bf Case 2: $y>0$.} In this case we have
\begin{displaymath}
\mathrm{F}\mathrm{F}\cong \mathrm{F}^{\oplus x}\oplus \mathrm{G}^{\oplus y},\quad
\mathrm{F}\mathrm{G}\cong \mathrm{F}^{\oplus b}\oplus \mathrm{G}^{\oplus b},\quad
\mathrm{G}\mathrm{F}\cong \mathrm{F}^{\oplus c}\oplus \mathrm{G}^{\oplus c},\quad
\mathrm{G}\mathrm{G}\cong \mathrm{F}^{\oplus y}\oplus \mathrm{G}^{\oplus x}.
\end{displaymath}
We use this to compute both sides of the isomorphism
$(\mathrm{F}\mathrm{G})\mathrm{F}\cong\mathrm{F}(\mathrm{G}\mathrm{F})$, and obtain  $xc=xb$ (by comparing the coefficients as $\mathrm{F}$) and $yc=yb$
(by comparing the coefficients as $\mathrm{G}$). As $y>0$, we have  $c=b$.

Finally, we compute both sides of the isomorphism
$(\mathrm{F}\mathrm{G})\mathrm{G}\cong\mathrm{F}(\mathrm{G}\mathrm{G})$. This
implies  $b^2+by=xy+bx$ (by comparing the coefficients as $\mathrm{F}$) and $b^2+bx=y^2+bx$
(by comparing the coefficients as $\mathrm{G}$). As $y>0$ and $b\geq 0$, from the second equation
we deduce $b=y$. Using $y>0$ and $b=y$, the first equation yields $b=x$. The claim follows.
\end{proof}

\subsection{The algebra of the cell $2$-representation}\label{s7.2}

Let $\cC$ be a fiat $2$-category as in Proposition~\ref{prop71}.
Consider the cell $2$-representation $\mathbf{C}_{\mathcal{H}}$ of $\cC$,
where $\mathcal{H}=\{\mathrm{F},\mathrm{G}\}$.
Denote by $A$ its underlying basic algebra with a fixed
decomposition $1_A =e_{\mathrm{F}} +e_{\mathrm{G}}$ of the identity into primitive orthogonal idempotents.
Let $P_{\mathrm{F}}$ and $P_{\mathrm{G}}$ denote the corresponding indecomposable projective $A$-modules
and  $L_{\mathrm{F}}$ and $L_{\mathrm{G}}$ their respective simple tops. Note that fiatness of $\cC$
implies self-injectivity of $A$ (cf. \cite[Theorem~2]{KMMZ}).

From \eqref{eq7-1} we obtain that the matrix describing the action of both
$\mathrm{F}$ and $\mathrm{G}$ in the cell $2$-representation (in the basis of indecomposable projective modules) is
\begin{equation}\label{eq7-2}
\left(\begin{array}{cc}
n&n\\
n&n
\end{array}\right).
\end{equation}
By \cite[Lemma 10]{MM5}, the same matrix describes the action of both
$\mathrm{F}$ and $\mathrm{G}$ in the abelianization of the cell $2$-representation
in the basis of simple modules. Without loss of generality,
assume that $\mathrm{G}$ is the Duflo involution of the left cell $\mathcal{H}$.
In the cell $2$-representation,
we then have $\mathrm{G}L_{\mathrm{G}} \cong P_{\mathrm{G}}$ and $\mathrm{F}L_{\mathrm{G}} \cong P_{\mathrm{F}}$. This, together with the description of the matrix of the action in \eqref{eq7-2}, shows that
\begin{equation}\label{eq13}
[P_{\mathrm{G}}: L_{\mathrm{F}}]=[P_{\mathrm{F}}: L_{\mathrm{F}}]=
[P_{\mathrm{G}}: L_{\mathrm{G}}]=[P_{\mathrm{F}}: L_{\mathrm{G}}]=n.
\end{equation}
Therefore, the Cartan matrix of $A$ is given by \eqref{eq7-2}.

As the bimodules $X$ and $Y$, representing $\mathrm{F}$ and $\mathrm{G}$, respectively, are projective,
see \cite[Theorem 2]{KMMZ} and \cite[Lemma 13]{MM5} for details,
we deduce that $Ae_{\mathrm{F}}\otimes_{\Bbbk} e_{\mathrm{G}}A$ appears as a direct summand of $X$
and $Ae_{\mathrm{G}}\otimes_{\Bbbk} e_{\mathrm{G}}A$ appears as a direct summand of $Y$.
Due to $\mathrm{G}^\star\cong \mathrm{F}$, and
\begin{displaymath}
0\neq \mathrm{Hom}_{\mathbf{C}_{\mathcal{H}}}(\mathrm{G} L_{\mathrm{G}}, L_{\mathrm{G}}) \cong \mathrm{Hom}_{\mathbf{C}_{\mathcal{H}}}( L_{\mathrm{G}},\mathrm{F} L_{\mathrm{G}})\cong \mathrm{Hom}_{\mathbf{C}_{\mathcal{H}}}( L_{\mathrm{G}},P_{\mathrm{F}}),
\end{displaymath}
the algebra $A$ is not weakly symmetric.
Furthermore, we have
\begin{displaymath}
0=\mathrm{Hom}_A(P_{\mathrm{G}},L_{\mathrm{F}} ) \cong\mathrm{Hom}_A(\mathrm{G}L_{\mathrm{G}},L_{\mathrm{F}} )
\cong \mathrm{Hom}_A(L_{\mathrm{G}},\mathrm{F}L_{\mathrm{F}} ),
\end{displaymath}
so  $\mathrm{F}L_{\mathrm{F}}$ is a direct sum of copies of $P_G$.
Comparing the Cartan matrix of $A$ with the matrix of the action of $\mathrm{F}$ in the basis of simples
(both given by  \eqref{eq7-2}), we see that $\mathrm{F}L_{\mathrm{F}}\cong P_{\mathrm{G}}$.
Similarly we deduce $\mathrm{G}L_{\mathrm{F}}\cong P_{\mathrm{F}}$.
Hence, we have
\begin{equation}\label{eq131}
X\cong Ae_{\mathrm{F}}\otimes_{\Bbbk} e_{\mathrm{G}}A\oplus Ae_{\mathrm{G}}\otimes_{\Bbbk} e_{\mathrm{F}}A \quad \text{and}\quad
Y\cong Ae_{\mathrm{F}}\otimes_{\Bbbk} e_{\mathrm{F}}A\oplus Ae_{\mathrm{G}}\otimes_{\Bbbk} e_{\mathrm{G}}A.
\end{equation}

\subsection{Functors isomorphic to the identity endomorphism of $2$-representations}\label{s3.7}

In this subsection, we will formulate a general result for arbitrary finitary
$2$-category $\cC$. This result will be needed for Subsection~\ref{s7.3}. For simplicity, we assume that
$\cC$ has only one object $\mathtt{i}$. Let $\mathbf{M}$ be a finitary $2$-representation of $\cC$.
Let $(\mathrm{Id}_{\mathbf{M}},\eta):\mathbf{M}\to \mathbf{M}$ be the identity endomorphism $\mathbf{M}$.
Here $\eta$ is given by the family $\{\eta_{\mathrm{F}},\, \mathrm{F}\in\cC(\mathtt{i},\mathtt{i})\}$
of natural transformations where each $\eta_{\mathrm{F}}$ is the identity natural transformation of
$\mathbf{M}(\mathrm{F})$.

\begin{lemma}\label{lem333}
Let $\Phi:\mathbf{M}(\mathtt{i})\to \mathbf{M}(\mathtt{i})$ 
be a functor isomorphic to the identity functor $\mathrm{Id}_{\mathbf{M}(\mathtt{i})}$.
Then there exists a family of natural isomorphisms
$\{\zeta_{\mathrm{F}},\, \mathrm{F}\in\cC(\mathtt{i},\mathtt{i})\}=:\zeta$ such that
$(\Phi,\zeta)$ is an endomorphism of the $2$-representation $\mathbf{M}$.
\end{lemma}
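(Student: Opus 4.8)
The plan is to obtain $\zeta$ by transporting the identity endomorphism structure of $(\mathrm{Id}_{\mathbf{M}},\eta)$ along an isomorphism between $\Phi$ and the identity functor. Fix a natural isomorphism $\beta\colon\Phi\Rightarrow\mathrm{Id}_{\mathbf{M}(\mathtt{i})}$, which exists by hypothesis, and for every $\mathrm{F}\in\cC(\mathtt{i},\mathtt{i})$ set
\begin{displaymath}
\zeta_{\mathrm{F}}:=\big(\mathbf{M}(\mathrm{F})\,\beta^{-1}\big)\bullet\big(\beta\,\mathbf{M}(\mathrm{F})\big)\colon \Phi\circ\mathbf{M}(\mathrm{F})\Longrightarrow\mathbf{M}(\mathrm{F})\circ\Phi,
\end{displaymath}
where $\beta\,\mathbf{M}(\mathrm{F})$ and $\mathbf{M}(\mathrm{F})\,\beta^{-1}$ denote the two whiskerings and $\bullet$ is vertical composition. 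As a composite of natural isomorphisms each $\zeta_{\mathrm{F}}$ is a natural isomorphism, so the content is to check that the family $\zeta=\{\zeta_{\mathrm{F}}\}$ obeys the coherence axioms for a morphism of $2$-representations. Should the convention in use orient these $2$-cells the other way, one simply interchanges the roles of $\beta$ and $\beta^{-1}$.

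Concretely, I would verify the three standard axioms, treating $\mathbf{M}$ as a strict $2$-functor, as we may. For the identity $1$-morphism, $\mathbf{M}(\mathbbm{1}_{\mathtt{i}})=\mathrm{Id}_{\mathbf{M}(\mathtt{i})}$ and the whiskerings are trivial, so $\zeta_{\mathbbm{1}_{\mathtt{i}}}=\beta^{-1}\bullet\beta=\mathrm{id}_{\Phi}$, as required. For composable $\mathrm{F},\mathrm{G}$, I would expand the right-hand side of the required identity $\zeta_{\mathrm{F}\mathrm{G}}=(\mathbf{M}(\mathrm{F})\,\zeta_{\mathrm{G}})\bullet(\zeta_{\mathrm{F}}\,\mathbf{M}(\mathrm{G}))$ into four whiskerings of $\beta$ and $\beta^{-1}$; the two innermost terms combine to $\mathbf{M}(\mathrm{F})\,(\beta\bullet\beta^{-1})\,\mathbf{M}(\mathrm{G})=\mathrm{id}$ and cancel, leaving exactly $(\mathbf{M}(\mathrm{F}\mathrm{G})\,\beta^{-1})\bullet(\beta\,\mathbf{M}(\mathrm{F}\mathrm{G}))=\zeta_{\mathrm{F}\mathrm{G}}$. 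For a $2$-morphism $\alpha\colon\mathrm{F}\Rightarrow\mathrm{G}$, the naturality square $\zeta_{\mathrm{G}}\bullet(\Phi\,\mathbf{M}(\alpha))=(\mathbf{M}(\alpha)\,\Phi)\bullet\zeta_{\mathrm{F}}$ follows from two applications of the interchange law to the horizontal composites $\beta\ast\mathbf{M}(\alpha)$ and $\mathbf{M}(\alpha)\ast\beta^{-1}$, using nothing beyond naturality of $\beta$.

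I do not anticipate a genuine obstacle: all three checks are formal consequences of the interchange law together with $\beta\bullet\beta^{-1}=\mathrm{id}$, and the only real care is in fixing the direction conventions for the coherence $2$-cells and in tracking whiskerings. The conceptual point, which both motivates the construction and serves as a sanity check, is that $\beta$ is then an invertible $2$-morphism $(\Phi,\zeta)\Rightarrow(\mathrm{Id}_{\mathbf{M}},\eta)$ in $\cC\text{-}\mathrm{afmod}$; in other words $(\Phi,\zeta)$ is precisely the transport of the identity endomorphism along $\beta$, and is therefore automatically an endomorphism of $\mathbf{M}$ that is isomorphic to the identity one.
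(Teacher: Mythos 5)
Your proposal is correct and follows essentially the same route as the paper: fix a natural isomorphism between $\Phi$ and the identity functor, define $\zeta_{\mathrm{F}}$ as the vertical composite of the two whiskerings (your $\beta$ is exactly the paper's $\nu=\theta^{\mone}$, so your formula for $\zeta_{\mathrm{F}}$ coincides with the paper's \eqref{eq335}), and verify naturality via the interchange law and the composition axiom via cancellation of $\beta\bullet\beta^{-1}$. Even your closing observation that $\beta$ becomes an invertible modification from $(\Phi,\zeta)$ to $(\mathrm{Id}_{\mathbf{M}},\eta)$ is precisely the content of the paper's Remark~\ref{rem373}.
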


\begin{proof}
Note that $\Phi \cong \mathrm{Id}_{\mathbf{M}(\mathtt{i})}$ as a functor.
Let $\theta:\mathrm{Id}_{\mathbf{M}(\mathtt{i})} \to \Phi$ be a fixed natural isomorphism
and set $\nu:=\theta^{\mone}$. For any $1$-morphisms $\mathrm{F},\mathrm{G}$ and
$2$-morphism $\alpha:\mathrm{F}\to\mathrm{G}$, consider the diagram
\begin{equation}\label{eq371}
\xymatrix{\Phi\circ\mathbf{M}(\mathrm{F})\ar[rr]^{\nu\circ_{\mathrm{h}}\mathrm{id}_{\mathbf{M}(\mathrm{F})}}
\ar[d]_{\mathrm{id}_{\Phi}\circ_{\mathrm{h}}\mathbf{M}(\alpha)}&& \mathbf{M}(\mathrm{F})\ar[rr]^{\mathrm{id}_{\mathbf{M}(\mathrm{F})}\circ_{\mathrm{h}}\theta}
\ar[d]_{\mathbf{M}(\alpha)}&&\mathbf{M}(\mathrm{F})\circ\Phi\ar[d]^{\mathbf{M}(\alpha)\circ_{\mathrm{h}}\mathrm{id}_{\Phi}}\\
\Phi\circ\mathbf{M}(\mathrm{G})\ar[rr]^{\nu\circ_{\mathrm{h}}\mathrm{id}_{\mathbf{M}(\mathrm{G})}}&& \mathbf{M}(\mathrm{G})\ar[rr]^{\mathrm{id}_{\mathbf{M}(\mathrm{G})}\circ_{\mathrm{h}}\theta}&&\mathbf{M}(\mathrm{G})\circ\Phi.
}
\end{equation}
Here in the middle column we use the fact that, for any $1$-morphism $\mathrm{H}$, we have
\begin{displaymath}
\mathrm{Id}_{\mathbf{M}(\mathtt{i})}\circ_{\mathrm{h}}\mathbf{M}(\mathrm{H})=\mathbf{M}(\mathrm{H})
=\mathbf{M}(\mathrm{H})\circ_{\mathrm{h}}\mathrm{Id}_{\mathbf{M}(\mathtt{i})}.
\end{displaymath}
Diagram~\eqref{eq371} commutes thanks to the interchange law, indeed,
both paths in the left square are equal to $\nu\circ_{\mathrm{h}}\mathbf{M}(\alpha)$
and both paths  in the right square are equal to $\mathbf{M}(\alpha)\circ_{\mathrm{h}}\theta$.

For each $1$-morphism ${\mathrm{F}}$, define
\begin{equation}\label{eq335}
\begin{split}
\zeta_{\mathrm{F}}:&=(\mathrm{id}_{\mathbf{M}(\mathrm{F})}\circ_{\mathrm{h}}\theta)
\circ_{\mathrm{v}}(\nu\circ_{\mathrm{h}}\mathrm{id}_{\mathbf{M}(\mathrm{F})}):
\Phi\circ\mathbf{M}(\mathrm{F})\to \mathbf{M}(\mathrm{F})\circ\Phi.
\end{split}
\end{equation}
Now we claim that $(\Phi,\zeta)$ is an endomorphism of the $2$-representation $\mathbf{M}$.
Commutativity of \eqref{eq371} gives $(\mathbf{M}(\alpha)\circ_{\mathrm{h}}\mathrm{id}_{\Phi})\circ_{\mathrm{v}}\zeta_{\mathrm{F}}=
\zeta_{\mathrm{G}}\circ_{\mathrm{v}}(\mathrm{id}_{\Phi}\circ_{\mathrm{h}}\mathbf{M}(\alpha))$.
We are hence left to check the equality
\begin{equation}\label{eqnna21}
\zeta_{\mathrm{F}\circ \mathrm{G}}=(\mathrm{id}_{\mathbf{M}(\mathrm{F})}\circ_{\mathrm{h}}\zeta_{\mathrm{G}})
\circ_{\mathrm{v}}(\zeta_{\mathrm{F}}\circ_{\mathrm{h}} \mathrm{id}_{\mathbf{M}(\mathrm{G})}).
\end{equation}
Here, by definition, we have
\begin{equation*}
\begin{split}
\mathrm{id}_{\mathbf{M}(\mathrm{F})}\circ_{\mathrm{h}}\zeta_{\mathrm{G}}&=(\mathrm{id}_{\mathbf{M}(\mathrm{F})}\circ_{\mathrm{h}}
\mathrm{id}_{\mathbf{M}(\mathrm{G})}\circ_{\mathrm{h}}\theta)
\circ_{\mathrm{v}}(\mathrm{id}_{\mathbf{M}(\mathrm{F})}\circ_{\mathrm{h}}\nu\circ_{\mathrm{h}}\mathrm{id}_{\mathbf{M}(\mathrm{G})})\\
&=(\mathrm{id}_{\mathbf{M}(\mathrm{F\circ G})}\circ_{\mathrm{h}}\theta)
\circ_{\mathrm{v}}(\mathrm{id}_{\mathbf{M}(\mathrm{F})}\circ_{\mathrm{h}}\nu\circ_{\mathrm{h}}\mathrm{id}_{\mathbf{M}(\mathrm{G})})
\end{split}
\end{equation*}
and
\begin{equation*}
\begin{split}
\zeta_{\mathrm{F}}\circ_{\mathrm{h}} \mathrm{id}_{\mathbf{M}(\mathrm{G})}
&=(\mathrm{id}_{\mathbf{M}(\mathrm{F})}\circ_{\mathrm{h}}\theta\circ_{\mathrm{h}} \mathrm{id}_{\mathbf{M}(\mathrm{G})})
\circ_{\mathrm{v}}(\nu\circ_{\mathrm{h}}\mathrm{id}_{\mathbf{M}(\mathrm{F})}\circ_{\mathrm{h}} \mathrm{id}_{\mathbf{M}(\mathrm{G})})\\
&=(\mathrm{id}_{\mathbf{M}(\mathrm{F})}\circ_{\mathrm{h}}\theta\circ_{\mathrm{h}} \mathrm{id}_{\mathbf{M}(\mathrm{G})})
\circ_{\mathrm{v}}(\nu\circ_{\mathrm{h}}\mathrm{id}_{\mathbf{M}(\mathrm{F}\circ\mathrm{G})}).
\end{split}
\end{equation*}
Now \eqref{eqnna21} follows from the fact that $\nu\theta=\mathrm{Id}_{\mathbf{M}(\mathtt{i})}$.
The proof is complete.
\end{proof}

\begin{remark}\label{rem373}
{\hspace{1mm}}
{\em
\begin{enumerate}[$($i$)$]
\item\label{rem373.1} The natural isomorphism $\theta:\mathrm{Id}_{\mathbf{M}(\mathtt{i})} \to \Phi$ defines
a modification from $(\mathrm{Id}_{\mathbf{M}},\eta)$ to $(\Phi,\zeta)$
whose inverse is given by $\nu:\Phi\to \mathrm{Id}_{\mathbf{M}(\mathtt{i})}$.
Indeed, for any $1$-morphisms $\mathrm{F},\mathrm{G}$ and any $2$-morphism $\alpha:\mathrm{F}\to\mathrm{G}$, we have
\begin{equation*}
\begin{split}
(\mathbf{M}(\alpha)\circ_{\mathrm{h}}\theta)\circ_{\mathrm{v}}\eta_{\mathrm{F}}
&=\mathbf{M}(\alpha)\circ_{\mathrm{h}}\theta\\
&=(\mathrm{id}_{\mathbf{M}(\mathrm{G})}\circ_{\mathrm{h}}\theta)
\circ_{\mathrm{v}}(\mathbf{M}(\alpha)\circ_{\mathrm{h}}\mathrm{Id}_{\mathbf{M}(\mathtt{i})})\\
&=(\mathrm{id}_{\mathbf{M}(\mathrm{G})}\circ_{\mathrm{h}}\theta)
\circ_{\mathrm{v}}(\mathrm{Id}_{\mathbf{M}(\mathtt{i})}\circ_{\mathrm{h}}\mathbf{M}(\alpha))\\
&=(\mathrm{id}_{\mathbf{M}(\mathrm{G})}\circ_{\mathrm{h}}\theta)
\circ_{\mathrm{v}}(\nu\circ_{\mathrm{h}}\mathrm{id}_{\mathbf{M}(\mathrm{G})})\circ_{\mathrm{v}}(\theta\circ_{\mathrm{h}}\mathbf{M}(\alpha))\\
&=\zeta_{\mathrm{G}}
\circ_{\mathrm{v}}(\theta\circ_{\mathrm{h}}\mathbf{M}(\alpha)),
\end{split}
\end{equation*}\vspace{-3mm}
\item\label{rem373.2}
Any invertible modification $\theta$ from $(\mathrm{Id}_{\mathbf{M}},\eta)$ to some $(\Phi,\zeta)\in \mathrm{End}_{\ccC\text{-afmod}}(\mathbf{M})$ defines a natural isomorphism
from $\mathrm{Id}_{\mathbf{M}(\mathtt{i})}$ to $\Phi$.
Moreover, from the fact that $(\mathbf{M}(\mathrm{id}_{\mathrm{F}})\circ_{\mathrm{h}}\theta)\circ_{\mathrm{v}}\eta_{\mathrm{F}}=\zeta_{\mathrm{F}}
\circ_{\mathrm{v}}(\theta\circ_{\mathrm{h}}\mathbf{M}(\mathrm{id}_{\mathrm{F}}))$,
it follows that each $\zeta_{\mathrm{F}}$ is uniquely defined by \eqref{eq335} with $\nu:=\theta^{\mone}$.
\end{enumerate}}
\end{remark}

\subsection{Inductive limit construction for $2$-representations}\label{s4.33}
Assume that we are in the same setup as in Subsection~\ref{s3.7}. For any finitary $2$-representation $\mathbf{M}$ of $\cC$,
we denote by $\overline{\mathbf{M}}^{\text{\,pr}}$ the $2$-subrepresentation of $\overline{\mathbf{M}}$ with the action of $\cC$ restricted to the category $\overline{\mathbf{M}}^{\text{\,pr}}(\mathtt{i})$ consisting of projective objects in $\overline{\mathbf{M}}(\mathtt{i})$.
There exists a strict $2$-natural transformation $\Upsilon\colon\mathbf{M}\to \overline{\mathbf{M}}^{\text{\,pr}}$ given by sending an object $X$ to the diagram $0\to X$
with the obvious assignment on morphisms.
Similarly to \cite[Subsection~5.8]{MaMa}, we have a direct system
\begin{equation}\label{dr0}
\mathbf{M}\to \overline{\mathbf{M}}^{\text{\,pr}}\to \overline{(\overline{\mathbf{M}}^{\text{\,pr}})}^{\text{\,pr}}\to\cdots,
\end{equation}
where each arrow is given by $\Upsilon$ with $\mathbf{M}$ replaced by the starting point corresponding to this arrow.
We denote by $\underrightarrow{\mathbf{M}}$ the inductive limit of \eqref{dr0}.
This is a $2$-representation of $\cC$ and the natural embedding of $\mathbf{M}$
into $\underrightarrow{\mathbf{M}}$ is an equivalence.
Let $\mathcal{L}$ be a left cell of $\cC$ and $\mathbf{C}_{\mathcal{L}}:=\mathbf{N}_{\mathcal{L}}/\mathcal{I}_{\mathcal{L}}$ the corresponding cell $2$-representation.
By Yoneda Lemma, see \cite[Lemma~9]{MM2}, for any object $X$ in $\mathbf{M}(\mathtt{i})$
there exists a strict $2$-natural transformation $\Lambda_X\colon \mathbf{P}_{\mathtt{i}}\to \mathbf{M}$ which sends $\mathbbm{1}_{\mathtt{i}}$
to $X$ and, moreover, any morphism $f\colon X\to Y$ in $\mathbf{M}(\mathtt{i})$
extends to a modification
$\theta_f\colon\Lambda_X\to \Lambda_Y$. If $\mathcal{I}_{\mathcal{L}}$ annihilates $X$,
then $\Lambda_X$ induces a strict $2$-natural transformation $\Lambda'_X$ from $\mathbf{C}_{\mathcal{L}}$ to $\mathbf{M}$.
Indeed, we have the following commutative diagram
\begin{displaymath}
\xymatrix{\mathbf{N}_{\mathcal{L}}\ar@{^{(}->}[rr]^{\Xi}\ar@{->>}[d]_{\Pi}
&&\mathbf{P}_{\mathtt{i}}\ar[rr]^{\Lambda_X}&&\mathbf{M}.\\
\mathbf{C}_{\mathcal{L}}\ar[urrrr]_{\Lambda'_X}}
\end{displaymath}
If $\mathcal{I}_{\mathcal{L}}$ also annihilates $Y$, then $\Lambda_Y$ 
gives rise to a strict $2$-natural transformation $\Lambda'_Y$ from $\mathbf{C}_{\mathcal{L}}$ to $\mathbf{M}$
such that  $\Lambda_Y\Xi=\Lambda'_Y\Pi$.
Due to surjectivity of $\Pi$,
the modification $\theta_f\circ_{\mathrm{h}}\mathrm{id}_{\Xi}$
from $\Lambda_X\Xi=\Lambda'_X\Pi$ to $\Lambda_Y\Xi=\Lambda'_Y\Pi$
induces a modification $\theta'_f$ from
$\Lambda'_X$ to $\Lambda'_Y$ in $\mathrm{Hom}_{\ccC\text{-afmod}}(\mathbf{C}_{\mathcal{L}}, \mathbf{M})$.
By functoriality of the abelianization, via the limiting construction  \eqref{dr0} we thus obtain two
$2$-natural transformations $\underrightarrow{\Lambda'_X}, \underrightarrow{\Lambda'_Y}
\in \mathrm{Hom}_{\ccC\text{-afmod}}(\underrightarrow{\mathbf{C}_{\mathcal{L}}}, \underrightarrow{\mathbf{M}})$
and the modification
$\underrightarrow{\theta'}\colon \underrightarrow{\Lambda'_X}\to \underrightarrow{\Lambda'_Y}$.

\subsection{Symmetries of the cell $2$-representation}\label{s7.3}

\begin{lemma}\label{lem72}
The annihilators in $\cC$ of $L_\mathrm{F}$ and $L_\mathrm{G}$ coincide.
\end{lemma}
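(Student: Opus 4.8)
The plan is to show that each of the two annihilators equals the annihilator $\mathcal{I}_0$ of the cell $2$-representation $\mathbf{C}_{\mathcal{H}}$ itself, i.e. the $2$-ideal of those $2$-morphisms that $\overline{\mathbf{C}_{\mathcal{H}}}$ sends to a natural transformation vanishing on both $P_{\mathrm{F}}$ and $P_{\mathrm{G}}$. Recall that $\mathrm{Ann}_{\cC}(L)$ is the $2$-ideal of those $2$-morphisms of $\cC$ that act as zero on the $2$-subrepresentation of $\overline{\mathbf{C}_{\mathcal{H}}}$ generated by $L$; concretely, a $2$-morphism $\alpha\colon\mathrm{H}_1\to\mathrm{H}_2$ lies in $\mathrm{Ann}_{\cC}(L)$ if and only if the component of $\overline{\mathbf{C}_{\mathcal{H}}}(\alpha)$ at every object of the form $\overline{\mathbf{C}_{\mathcal{H}}}(\mathrm{H})L$ vanishes. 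Working with the whole subrepresentation (rather than with the single object $L$) is exactly what makes this a genuine two-sided $2$-ideal, and this bookkeeping is the one delicate point of the argument.

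First I would describe the subrepresentation generated by each simple. Since $\mathbbm{1}_{\mathtt{i}}$, $\mathrm{F}$ and $\mathrm{G}$ are the only indecomposable $1$-morphisms, every object $\overline{\mathbf{C}_{\mathcal{H}}}(\mathrm{H})L$ is a direct sum of copies of $L$, $\mathrm{F}L$ and $\mathrm{G}L$. Using the isomorphisms established in Subsection~\ref{s7.2}, for $L_{\mathrm{G}}$ these are $L_{\mathrm{G}}$, $\mathrm{F}L_{\mathrm{G}}\cong P_{\mathrm{F}}$ and $\mathrm{G}L_{\mathrm{G}}\cong P_{\mathrm{G}}$, while for $L_{\mathrm{F}}$ they are $L_{\mathrm{F}}$, $\mathrm{F}L_{\mathrm{F}}\cong P_{\mathrm{G}}$ and $\mathrm{G}L_{\mathrm{F}}\cong P_{\mathrm{F}}$. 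In both cases the two indecomposable projectives $P_{\mathrm{F}}$ and $P_{\mathrm{G}}$ occur (merely with the roles of $\mathrm{F}$ and $\mathrm{G}$ interchanged), so the generated subrepresentation contains $\mathrm{add}\{P_{\mathrm{F}},P_{\mathrm{G}}\}=\mathbf{C}_{\mathcal{H}}(\mathtt{i})$. Since $P_{\mathrm{F}}$ and $P_{\mathrm{G}}$ lie in both subrepresentations, this already yields the inclusions $\mathrm{Ann}_{\cC}(L_{\mathrm{F}})\subseteq\mathcal{I}_0$ and $\mathrm{Ann}_{\cC}(L_{\mathrm{G}})\subseteq\mathcal{I}_0$.

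For the reverse inclusions I would invoke right exactness of the abelianized $1$-morphisms. Fix a projective cover $\pi\colon P_{\mathrm{G}}\twoheadrightarrow L_{\mathrm{G}}$ and let $\alpha\colon\mathrm{H}_1\to\mathrm{H}_2$ satisfy $\overline{\mathbf{C}_{\mathcal{H}}}(\alpha)_{P_{\mathrm{G}}}=0$. Naturality of $\overline{\mathbf{C}_{\mathcal{H}}}(\alpha)$ along $\pi$ gives $\overline{\mathbf{C}_{\mathcal{H}}}(\alpha)_{L_{\mathrm{G}}}\circ\overline{\mathbf{C}_{\mathcal{H}}}(\mathrm{H}_1)(\pi)=\overline{\mathbf{C}_{\mathcal{H}}}(\mathrm{H}_2)(\pi)\circ\overline{\mathbf{C}_{\mathcal{H}}}(\alpha)_{P_{\mathrm{G}}}=0$, and since $\overline{\mathbf{C}_{\mathcal{H}}}(\mathrm{H}_1)$ is right exact the morphism $\overline{\mathbf{C}_{\mathcal{H}}}(\mathrm{H}_1)(\pi)$ is an epimorphism, forcing $\overline{\mathbf{C}_{\mathcal{H}}}(\alpha)_{L_{\mathrm{G}}}=0$. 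Thus any $\alpha\in\mathcal{I}_0$ vanishes on $L_{\mathrm{G}}$ as well, hence on the entire subrepresentation generated by $L_{\mathrm{G}}$, giving $\mathcal{I}_0\subseteq\mathrm{Ann}_{\cC}(L_{\mathrm{G}})$. The verbatim argument with the projective cover $P_{\mathrm{F}}\twoheadrightarrow L_{\mathrm{F}}$ yields $\mathcal{I}_0\subseteq\mathrm{Ann}_{\cC}(L_{\mathrm{F}})$. Combining the four inclusions proves $\mathrm{Ann}_{\cC}(L_{\mathrm{F}})=\mathcal{I}_0=\mathrm{Ann}_{\cC}(L_{\mathrm{G}})$.

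The crux is the observation that, for each simple, the two non-identity $1$-morphisms $\mathrm{F}$ and $\mathrm{G}$ produce exactly the two projectives $P_{\mathrm{F}}$ and $P_{\mathrm{G}}$, so the two generated subrepresentations have the same projective objects; right exactness then collapses the annihilator of each simple to the annihilator of $\mathrm{add}\{P_{\mathrm{F}},P_{\mathrm{G}}\}$, which is independent of the simple. The only thing requiring care is ensuring one argues with the full subrepresentation generated by $L$, since evaluating a natural transformation at a single simple is not stable under right horizontal composition and would only produce a left $2$-ideal.
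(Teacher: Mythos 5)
Your argument is internally consistent, but it proves a different statement from the one the paper asserts, because you have quietly changed the definition of the annihilator. In this paper (following \cite[Subsection~6.5]{MM2} and \cite[Proposition~21]{MM2}, on which the actual proof rests), the annihilator of a simple object $L$ is the \emph{left} ideal of all $2$-morphisms $\alpha$ whose single component $\overline{\mathbf{C}_{\mathcal{H}}}(\alpha)_{L}$ vanishes; it is only a left ideal precisely because vanishing at $L$ is stable under left whiskering $\mathrm{id}_{\mathrm{K}}\circ_{\mathrm{h}}\alpha$ but not under right whiskering, since $(\alpha\circ_{\mathrm{h}}\mathrm{id}_{\mathrm{K}})_{L}=\alpha_{\mathrm{K}L}$. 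You instead take the $2$-ideal of $2$-morphisms vanishing on the whole subrepresentation generated by $L$, which is a strictly smaller ideal, and with that redefinition the lemma collapses to the near-tautology you prove: both $2$-ideals equal the annihilator $\mathcal{I}_0$ of $\mathbf{C}_{\mathcal{H}}$, simply because both generated subrepresentations contain $P_{\mathrm{F}}$ and $P_{\mathrm{G}}$.

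Under the paper's definition, your proof only establishes the easy containment $\mathcal{I}_0\subseteq \{\alpha : \alpha_{L_{\mathrm{F}}}=0\}\cap\{\alpha : \alpha_{L_{\mathrm{G}}}=0\}$ (your right-exactness step, which is correct). The actual content of the lemma is the equality of the two evaluation kernels $\{\alpha : \alpha_{L_{\mathrm{F}}}=0\}=\{\alpha : \alpha_{L_{\mathrm{G}}}=0\}$, and your argument says nothing about this: a $2$-morphism can vanish at $L_{\mathrm{G}}$ while acting nonzero on $P_{\mathrm{G}}$ (the naturality square along $P_{\mathrm{G}}\twoheadrightarrow L_{\mathrm{G}}$ only forces its component at $P_{\mathrm{G}}$ to land in the kernel of $\overline{\mathbf{C}_{\mathcal{H}}}(\mathrm{H}_2)(\pi)$), and right exactness transports vanishing \emph{from} a projective cover \emph{to} its simple top, never back. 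This missing direction is exactly where the paper invokes the Duflo-involution machinery of \cite{MM2} (the annihilator of $L_{\mathrm{G}}$ is the unique maximal left ideal, giving $\mathrm{Ann}(L_{\mathrm{F}})\subseteq\mathrm{Ann}(L_{\mathrm{G}})$, and evaluation at $L_{\mathrm{G}}$ is full) and then closes the gap by the dimension count using $(\mathrm{F}\oplus\mathrm{G})L_{\mathrm{F}}\cong(\mathrm{F}\oplus\mathrm{G})L_{\mathrm{G}}$ from Subsection~\ref{s7.2}. The distinction is not pedantic: the application in Subsection~\ref{s7.3} factors $\Phi\Xi$ through $\mathbf{C}_{\mathcal{H}}$, which requires every morphism of $\mathcal{I}_{\mathcal{H}}$ (the kernel of evaluation at $L_{\mathrm{G}}$ on $\mathbf{N}_{\mathcal{H}}$, by \cite[Subsection~6.5]{MM2}) to be killed by evaluation at $L_{\mathrm{F}}$; your version cannot be substituted there, since maximality of $\mathcal{I}_{\mathcal{H}}$ only yields $\mathcal{I}_0\cap\mathbf{N}_{\mathcal{H}}\subseteq\mathcal{I}_{\mathcal{H}}$, the wrong direction.
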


\begin{proof}
Since $\mathrm{G}$ is the Duflo involution in $\mathcal{H}$, it follows from \cite[Subsection~6.5]{MM2} 
that the annihilator of $L_\mathrm{F}$ is contained in the annihilator of $L_\mathrm{G}$
(as the latter is a certain unique maximal left ideal by \cite[Proposition~21]{MM2}). Furthermore, the evaluation
at $L_\mathrm{G}$, inside the abelianized cell $2$-representation,
of $\mathrm{Hom}_{\ccC}(\mathrm{H}_1,\mathrm{H}_2)$ is full for all
$\mathrm{H}_1,\mathrm{H}_2\in\{\mathrm{F},\mathrm{G}\}$ by \cite[Subsection~6.5]{MM2}.

Hence, if the annihilator of $L_\mathrm{F}$ were strictly contained in the annihilator of $L_\mathrm{G}$,
the dimension of the endomorphism space (in the cell $2$-representation) of $(\mathrm{F}\oplus \mathrm{G})L_\mathrm{F}$
would be strictly bigger than the dimension of the endomorphism space of
$(\mathrm{F}\oplus \mathrm{G})L_\mathrm{G}$. However, from Subsection~\ref{s7.2} we know
that $(\mathrm{F}\oplus \mathrm{G})L_\mathrm{F}\cong (\mathrm{F}\oplus \mathrm{G})L_\mathrm{G}$.
The claim follows.
\end{proof}

On the one hand, by \cite[Lemma~9]{MM2},
sending $\mathbbm{1}_{\mathtt{i}}$
to $L_{\mathrm{F}}$ extends to a strict $2$-natural transformation
$\Phi\colon \mathbf{P}_{\mathtt{i}}\to \overline{\mathbf{C}_{\mathcal{H}}}$.
By Lemma~\ref{lem72}, we know that $\Phi\Xi$ factors through
$\mathbf{C}_{\mathcal{H}}$ and obtain
a strict $2$-natural transformation $\Phi'$ from $\mathbf{C}_{\mathcal{H}}$ to $\overline{\mathbf{C}_{\mathcal{H}}}$. Note that $\Phi$ sends both $\mathrm{F}$ and $\mathrm{G}$ to projective objects in $\overline{\mathbf{C}_{\mathcal{H}}}(\mathtt{i})$.
Therefore $\Phi'$ is also a strict $2$-natural transformation from $\mathbf{C}_{\mathcal{H}}$ to $\overline{\mathbf{C}_{\mathcal{H}}}^{\text{\,pr}}$
and we have the following commutative diagram:
\begin{equation}\label{eq73}
\xymatrix{\mathbf{N}_{\mathcal{L}}\ar@{^{(}->}[rr]^{\Xi}\ar[drr]^{\Phi\Xi}\ar@{->>}[dd]_{\Pi}
&&\mathbf{P}_{\mathtt{i}}\ar[rr]^{\Phi}&&
\overline{\mathbf{C}_{\mathcal{H}}}\ar[rr]^{\overline{\Phi'}}&&
\overline{(\overline{\mathbf{C}_{\mathcal{H}}}^{\text{\,pr}})}\\
&&\overline{\mathbf{C}_{\mathcal{H}}}^{\text{\,pr}}\ar@{^{(}->}[urr]
&&\overline{(\overline{\mathbf{C}_{\mathcal{H}}}^{\text{\,pr}})}^{\text{\,pr}}
\ar@{^{(}->}[urr]\\
\mathbf{C}_{\mathcal{H}}\ar[urr]^{\Phi'}
\ar@/_1.5pc/[urrrr]^{\tilde{\Phi}}\ar@/_2.0pc/[uurrrr]^{\Phi'}
\ar@/_3.5pc/[uurrrrrr]^{\tilde{\Phi}}&&&&}
\end{equation}
Applying the 
procedure in Subsection~\ref{s4.33} to $\Phi'\colon\mathbf{C}_{\mathcal{H}}\to\overline{\mathbf{C}_{\mathcal{H}}}^{\text{\,pr}}$, 
we obtain a strict $2$-natural transformation
$\underrightarrow{\Phi'}$ in $\mathrm{End}_{\ccC\text{-afmod}}(\underrightarrow{\mathbf{C}_{\mathcal{H}}})$
which swaps the isomorphism classes of indecomposable projectives.

On the other hand, sending $\mathbbm{1}_{\mathtt{i}}$
to $L_{\mathrm{G}}$ extends to a strict $2$-natural transformation
$\Psi\colon \mathbf{P}_{\mathtt{i}}\to \overline{\mathbf{C}_{\mathcal{H}}}$,
and the latter induces a strict 2-natural transformation $\Psi'$
from $\mathbf{C}_{\mathcal{H}}$ to $\overline{\mathbf{C}_{\mathcal{H}}}$ (which factors through $\overline{\mathbf{C}_{\mathcal{H}}}^{\text{\,pr}}$).
For $\Psi$, we have similar diagram as \eqref{eq73}.
Note that $\overline{\Psi'}\Psi(\mathbbm{1}_{\mathtt{i}})=L_{\mathrm{G}}$ and $\overline{\Phi'}(L_{\mathrm{F}})\cong L_{\mathrm{G}}$.
For a fixed isomorphism $\alpha\colon L_{\mathrm{G}}\to\overline{\Phi'}(L_{\mathrm{F}})$,
by Subsection~\ref{s4.33} 
there exists an invertible modification $\vartheta$ from $\tilde{\Psi}=\overline{\Psi'}\Psi'$ to $\tilde{\Phi}=\overline{\Phi'}\Phi'$
(here both equalities are in $\mathrm{Hom}_{\ccC\text{-afmod}}(\mathbf{C}_{\mathcal{H}}, \overline{(\overline{\mathbf{C}_{\mathcal{H}}}^{\text{\,pr}})}^{\text{\,pr}})$.
Note that the limiting construction  \eqref{dr0} applied to $\tilde{\Psi}$
gives a functor isomorphic to $\mathrm{Id}_{\underrightarrow{\mathbf{C}_{\mathcal{H}}}}$.
Using Subsections~\ref{s3.7} and \ref{s4.33}, we thus get an invertible modification $\underrightarrow{\vartheta}$ from $\mathrm{Id}_{\underrightarrow{\mathbf{C}_{\mathcal{H}}}}$ to $(\underrightarrow{\Phi'})^2$.
Following \cite[Lemma~18]{MaMa} and the proof of \cite[Proposition~19]{MaMa},
we obtain that
\begin{enumerate}[$($a$)$]
\item\label{pt1} for any $\upsilon\in \mathrm{Hom}_{\ccC\text{-afmod}}(\mathrm{Id}_{\underrightarrow{\mathbf{C}_{\mathcal{H}}}}, (\underrightarrow{\Phi'})^2)$, we have $\mathrm{id}_{(\underrightarrow{\Phi'})^2}\circ_{\mathrm{h}}\upsilon
=\upsilon\circ_{\mathrm{h}}\mathrm{id}_{(\underrightarrow{\Phi'})^2}$;
\item\label{pt2} there exists an invertible modification $\upsilon\in \mathrm{Hom}_{\ccC\text{-afmod}}(\mathrm{Id}_{\underrightarrow{\mathbf{C}_{\mathcal{H}}}}, (\underrightarrow{\Phi'})^2)$  such that we have either $\mathrm{id}_{\underrightarrow{\Phi'}}\circ_{\mathrm{h}}\upsilon
=\upsilon\circ_{\mathrm{h}}\mathrm{id}_{\underrightarrow{\Phi'}}$ or $\mathrm{id}_{\underrightarrow{\Phi'}}\circ_{\mathrm{h}}\upsilon=
-\upsilon\circ_{\mathrm{h}}\mathrm{id}_{\underrightarrow{\Phi'}}$.
\end{enumerate}

Note that $(\underrightarrow{\Phi'})^2$ preserves the isomorphism classes of projectives
and hence defines an auto-equivalence of
$\underrightarrow{\mathbf{C}_{\mathcal{H}}}$ which is isomorphic to the identity.
Therefore $\underrightarrow{\Phi'}$ induces an automorphism $\varphi$ of
$A$ and such that $\varphi^2$, corresponding to $(\underrightarrow{\Phi'})^2$, is an inner
automorphism of $A$, cf. \cite[Lemma~1.10.9]{Zi}. Assume that
the inner automorphism $\varphi^2$ is of the form $x\mapsto axa^{\mone}$, where $x\in A$,
for some fixed invertible element $a\in A$.
Similarly to the paragraph before \cite[Proposition~39]{KMMZ},
there exists an element $b\in A$ which is a polynomial in $a^{\mone}$ and such that
$b^2=a^{\mone}$. Let $\sigma$ be the inner automorphism of $A$ given by
$x\mapsto bxb^{\mone}$, for $x\in A$. 

\begin{lemma}\label{nnlem75}
We have $(\sigma\varphi)^4=\mathrm{id}_A$. 
\end{lemma}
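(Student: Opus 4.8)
The plan is to express every relevant automorphism as an inner one and then to show that the square of the conjugating element attached to $(\sigma\varphi)^2$ is central. Write $\iota_u$ for the inner automorphism $x\mapsto uxu^{\mone}$, so that $\varphi^2=\iota_a$ and $\sigma=\iota_b$, and recall $\iota_u\circ\iota_v=\iota_{uv}$ together with $\varphi\circ\iota_v=\iota_{\varphi(v)}\circ\varphi$ for any unit $v$. The first step is to extract structural information about $a$: since $\varphi^3=\varphi\circ\varphi^2=\varphi\circ\iota_a=\iota_{\varphi(a)}\circ\varphi$ and also $\varphi^3=\varphi^2\circ\varphi=\iota_a\circ\varphi$, we get $\iota_{\varphi(a)}=\iota_a$, so that $z:=\varphi(a)a^{\mone}$ is a central unit and $\varphi(a)=za$.

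Next I would compute $(\sigma\varphi)^2$ by sliding the conjugations past $\varphi$:
\begin{displaymath}
(\sigma\varphi)^2=\iota_b\circ\varphi\circ\iota_b\circ\varphi=\iota_b\circ\iota_{\varphi(b)}\circ\varphi^2=\iota_b\circ\iota_{\varphi(b)}\circ\iota_a=\iota_c,\qquad c:=b\,\varphi(b)\,a.
\end{displaymath}
The crucial observation is that $a$, $b$ and $\varphi(b)$ pairwise commute. Indeed $b$ is by hypothesis a polynomial in $a^{\mone}$, so $b\in Z(A)[a^{\mone}]$; and since $\varphi(a^{\mone})=\varphi(a)^{\mone}=z^{\mone}a^{\mone}$ with $z$ central, $\varphi(b)$ is that same polynomial evaluated at $z^{\mone}a^{\mone}$, hence also lies in the commutative subalgebra $Z(A)[a^{\mone}]$. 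Thus all three elements sit inside one commutative subalgebra.

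With commutativity in hand the conclusion is immediate. Using $b^2=a^{\mone}$ and $\varphi(b)^2=\varphi(b^2)=\varphi(a^{\mone})=z^{\mone}a^{\mone}$, we obtain
\begin{displaymath}
c^2=b^2\,\varphi(b)^2\,a^2=a^{\mone}\cdot z^{\mone}a^{\mone}\cdot a^2=z^{\mone},
\end{displaymath}
which is central. Therefore $(\sigma\varphi)^4=\iota_c^{\,2}=\iota_{c^2}=\iota_{z^{\mone}}=\mathrm{id}_A$, as claimed.

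I expect the only genuinely delicate point to be the commutativity claim, i.e.\ that $\varphi(b)$ commutes with both $a$ and $b$; this is exactly where it matters that $b$ was chosen as a \emph{polynomial} in $a^{\mone}$ rather than an arbitrary square root of $a^{\mone}$, and that $\varphi(a)$ is a central multiple of $a$. Once that is established, the rest is routine bookkeeping with inner automorphisms.
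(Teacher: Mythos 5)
Your proof is correct and follows essentially the same route as the paper's: both hinge on the observation that commutativity of $\varphi$ with $\varphi^2$ forces $\varphi(a)$ to agree with $a$ up to a central unit, use the fact that $b$ is a polynomial in $a^{\mone}$ to conclude that $a$, $b$, $\varphi(b)$ all commute, and finish by showing that the element realizing $(\sigma\varphi)^4$ as an inner automorphism is central. Your execution is somewhat tidier -- the paper expands $(\sigma\varphi)^4$ directly into a nine-factor conjugator and reduces it to the central element $a\varphi(a^{\mone})$, whereas you factor through $(\sigma\varphi)^2=\iota_{b\varphi(b)a}$ and identify the square of the conjugator as $z^{\mone}$ -- but the underlying ideas coincide.
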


\begin{proof}
The obvious fact that $\varphi$ and $\varphi^2$ commute is equivalent to the 
requirement that $t:=\varphi(a^{\mone})a$ belongs to the center of $A$. 
We have
\begin{displaymath}
\varphi(t)=\varphi^2(a^{\mone})\varphi(a)=aa^{\mone}a^{\mone}\varphi(a)
=a^{\mone}\varphi(a)=t^{\mone}.
\end{displaymath}
Therefore $\varphi^2(t)=t=a\varphi(a^{\mone})aa^{\mone}=a\varphi(a^{\mone})$, 
which implies that $\varphi(a^{\mone})$ and $a$ commute.
Consequently, $\varphi(a^{\mone})$ and $a^{\mone}$ commute.
This implies that any polynomial in $\varphi(a^{\mone})$ commutes with
any polynomial in $a^{\mone}$. Therefore $\varphi(b)$ and $b$ commute
and thus $\varphi(b^{\mone})$ and $b$ commute as well. Hence the elements $a$, $a^{\mone}$,
$b$, $b^{\mone}$, $\varphi(a)$, $\varphi(a^{\mone})$, $\varphi(b)$, $\varphi(b^{\mone})$ all commute.

A direct computation shows that the action of  $(\sigma\varphi)^4$ on $A$ is given by conjugation with
\begin{displaymath}
b\varphi(b)aba^{-1}\varphi(a)\varphi(b)\varphi(a^{\mone})a^2. 
\end{displaymath}
Using commutativity of the factors, this reduces to $a\varphi(a^{\mone})$ which is central. The claim follows.
\end{proof}

The functor of twisting $A$-modules by $\sigma$ is isomorphic to the identity functor as $\sigma$ is inner.
By Lemma~\ref{lem333}, the functor of twisting by $\sigma$ gives rise to an endomorphism
$\Sigma$ of $\underrightarrow{\mathbf{C}_{\mathcal{H}}}$ which preserves the isomorphism classes of projectives.
Then the $2$-natural transformation 
$\Omega:=\Sigma\underrightarrow{\Phi'}\in \mathrm{End}_{\ccC\text{-afmod}}(\underrightarrow{\mathbf{C}_{\mathcal{H}}})$
induces an automorphism on $A$ given by $\sigma\varphi$. We denote this automorphism by $\iota$.

\begin{example}\label{exnew756}
{\em 
Let $A$ be the quotient of the path algebra of the quiver
\begin{displaymath}
\xymatrix{
1\ar@/^1pc/[rr]^{\alpha}&&2\ar@/^1pc/[ll]^{\beta}
} 
\end{displaymath}
modulo the relations $\alpha\beta=\beta\alpha=0$. Let $\varphi$ be the automorphism of $A$ defined by
$\varphi(e_1)=e_2$, $\varphi(e_2)=e_1$, $\varphi(\alpha)=-\beta$ and $\varphi(\beta)=\alpha$. 
Then $\varphi^4=\mathrm{id}_A$ but $\varphi^2\neq\mathrm{id}_A$. In fact, $\varphi^2$ is conjugation by $a=e_1-e_2$. 
Note that the element $\varphi(a^{\mone})a=-e_1-e_2$ is central. This example shows $\underrightarrow{\Phi'}$
does not necessarily correspond to an automorphism of order $2$.
}
\end{example}

\subsection{Connection to $\cG_{A}$}\label{s7.4}

Set $G$ to be the cyclic group generated by $\iota$ 
(note that $|G|=2$ or $|G|=4$) and consider the fiat $2$-category $\cG_{A}$,
where $A$ is the underlying algebra of $\mathbf{C}_{\mathcal{H}}$.
Let $\cH_A$ denote the full and faithful $2$-subcategory of $\cG_A$ 
generated by $(A, \tilde{\pi}_{1_{\hat{G}}})$ and
$1$-morphisms in the two-sided cell $\mathcal{J}_0$, referring to
Subsection~\ref{s3.3} and Subsection~\ref{s3.4} for notation.

\begin{theorem}\label{prop41}
If $\cC$ is $\mathcal{H}$-simple, then $\cC$ is biequivalent to a $2$-subcategory of $\cH_A$.
\end{theorem}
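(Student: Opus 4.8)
The plan is to realize $\cC$ inside $\cH_A$ by comparing two faithful actions on the category of projective $A$-modules: the cell $2$-representation $\mathbf{C}_{\mathcal{H}}$ of $\cC$, and the defining action $\mathbf{K}$ of $\cG_A$ via the functors $-\otimes^{(l)}-$. Recall from Subsection~\ref{s7.2} that $\mathbf{C}_{\mathcal{H}}$ has underlying self-injective algebra $A$ with primitive idempotents $e_{\mathrm{F}},e_{\mathrm{G}}$, and that $\mathrm{F}$ and $\mathrm{G}$ act by tensoring with the projective bimodules $X$ and $Y$ of \eqref{eq131}. Since $\cC$ is $\mathcal{H}$-simple and $\mathcal{H}=\{\mathrm{F},\mathrm{G}\}$ is a two-sided cell, the kernel of $\mathbf{C}_{\mathcal{H}}$ is a $2$-ideal that does not contain $\mathrm{id}_{\mathrm{F}}$, hence is zero, so $\mathbf{C}_{\mathcal{H}}$ is faithful. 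The action $\mathbf{K}$ is faithful by construction, since the $1$-morphisms of $\cG_A$ are functors and its $2$-morphisms are the corresponding morphisms in $\tilde{\mathcal{X}}$.

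First I would set $G=\langle\iota\rangle$, of order $2$ or $4$ by Lemma~\ref{nnlem75}, and consider the indecomposable bimodules $M_{\mathrm{F}}:=Ae_{\mathrm{F}}\otimes_{\Bbbk}e_{\mathrm{G}}A$ and $M_{\mathrm{G}}:=Ae_{\mathrm{F}}\otimes_{\Bbbk}e_{\mathrm{F}}A$ in $\mathcal{J}_0$, each taken with the idempotent $\varepsilon$ affording the trivial character of $G_{M_{\mathrm{F}}}$, resp.\ $G_{M_{\mathrm{G}}}$. As $\iota$ interchanges the isomorphism classes of $P_{\mathrm{F}}$ and $P_{\mathrm{G}}$, twisting by $\iota$ sends $M_{\mathrm{F}}$ to a bimodule isomorphic to $Ae_{\mathrm{G}}\otimes_{\Bbbk}e_{\mathrm{F}}A$ and $M_{\mathrm{G}}$ to $Ae_{\mathrm{G}}\otimes_{\Bbbk}e_{\mathrm{G}}A$; hence $\bigoplus_{\varphi\in G}{}^{\varphi}M_{\mathrm{F}}^{\varphi}$ and $\bigoplus_{\varphi\in G}{}^{\varphi}M_{\mathrm{G}}^{\varphi}$, after cutting by $\varepsilon$ (which halves the doubled summands when $|G|=4$), give precisely $X$ and $Y$. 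A short computation with these idempotents then yields isomorphisms of endofunctors $(M_{\mathrm{F}},\varepsilon)\otimes^{(l)}-\cong X\otimes_A-$ and $(M_{\mathrm{G}},\varepsilon)\otimes^{(l)}-\cong Y\otimes_A-$. Thus the $G$-symmetry glues the two ordinary bimodule summands of $X$ (resp.\ $Y$) into a single indecomposable object $(M_{\mathrm{F}},\varepsilon)$ (resp.\ $(M_{\mathrm{G}},\varepsilon)$) of $\tilde{\mathcal{X}}$, which is exactly why symmetric, and not ordinary, bimodules are needed. I would define $\Theta$ on generating $1$-morphisms by $\mathbbm{1}_{\mathtt{i}}\mapsto(A,\tilde{\pi}_{1_{\hat{G}}})$, $\mathrm{F}\mapsto(M_{\mathrm{F}},\varepsilon)$ and $\mathrm{G}\mapsto(M_{\mathrm{G}},\varepsilon)$; the displayed isomorphisms say that $\mathbf{K}(\Theta(\mathrm{H}))\cong\mathbf{C}_{\mathcal{H}}(\mathrm{H})$ for $\mathrm{H}\in\{\mathbbm{1}_{\mathtt{i}},\mathrm{F},\mathrm{G}\}$, and since the $1$-morphisms of $\cG_A$ are functors this forces $\Theta$ to respect horizontal composition up to isomorphism, the fusion rule $\Theta\mathrm{F}\circ\Theta\mathrm{F}\cong(\Theta\mathrm{F}\oplus\Theta\mathrm{G})^{\oplus n}$ matching \eqref{eq7-1}.

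To transport $2$-morphisms I would observe that, under $\mathbf{K}$, a $2$-morphism in $\mathrm{Hom}_{\tilde{\mathcal{X}}}((M_{\mathrm{P}},\varepsilon),(M_{\mathrm{Q}},\varepsilon))$ given by a tuple $(f_{\varphi})_{\varphi\in G}$ acts on $\bigoplus_{\psi}{}^{\psi}M_{\mathrm{P}}^{\psi}$ by the bimodule map whose $(\psi,\psi')$-block is ${}^{\psi}(f_{\psi'\psi^{\mone}})^{\psi}$; hence the image of $\mathbf{K}$ consists exactly of the $G$-equivariant bimodule homomorphisms, those intertwining the $\iota$-twisting. It therefore suffices to show that for every $2$-morphism $\alpha$ of $\cC$ the natural transformation $\mathbf{C}_{\mathcal{H}}(\alpha)$ is $G$-equivariant; one then sets $\Theta(\alpha):=\mathbf{K}^{-1}(\mathbf{C}_{\mathcal{H}}(\alpha))$, which is well defined by faithfulness of $\mathbf{K}$, is a $2$-functor because $\mathbf{C}_{\mathcal{H}}$ and $\mathbf{K}$ are, and is faithful because $\mathbf{C}_{\mathcal{H}}$ is. Since $\Theta$ is faithful, bijective on objects, and sends the indecomposables $\mathrm{F},\mathrm{G}$ to non-isomorphic indecomposables, taking for the target the $2$-subcategory of $\cH_A$ with the same object, $1$-morphisms the additive closure of $\{\Theta\mathbbm{1}_{\mathtt{i}},\Theta\mathrm{F},\Theta\mathrm{G}\}$, and $2$-morphisms the images of $\Theta$, makes $\Theta$ a biequivalence onto this (not necessarily $2$-full) $2$-subcategory, proving the theorem.

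The hard part will be the $G$-equivariance of $\mathbf{C}_{\mathcal{H}}(\alpha)$, and this is precisely where Subsection~\ref{s7.3} enters. The endomorphism $\Omega=\Sigma\underrightarrow{\Phi'}\in\mathrm{End}_{\ccC\text{-afmod}}(\underrightarrow{\mathbf{C}_{\mathcal{H}}})$ induces the automorphism $\iota$ on $A$ and, being a $2$-representation endomorphism, carries natural isomorphisms $\zeta_{\mathrm{H}}\colon\Omega\circ\mathbf{M}(\mathrm{H})\to\mathbf{M}(\mathrm{H})\circ\Omega$ such that, for every $2$-morphism $\alpha\colon\mathrm{H}\to\mathrm{H}'$,
\[
\zeta_{\mathrm{H}'}\circ_{\mathrm{v}}\bigl(\mathrm{id}_{\Omega}\circ_{\mathrm{h}}\mathbf{M}(\alpha)\bigr)
=\bigl(\mathbf{M}(\alpha)\circ_{\mathrm{h}}\mathrm{id}_{\Omega}\bigr)\circ_{\mathrm{v}}\zeta_{\mathrm{H}}.
\]
Translated into bimodules, this identity says exactly that $\mathbf{C}_{\mathcal{H}}(\alpha)$ commutes with $\iota$-twisting; applying it to $\Omega,\Omega^{2},\dots$ and using $\underrightarrow{\mathbf{C}_{\mathcal{H}}}\simeq\mathbf{C}_{\mathcal{H}}$ gives equivariance for all of $G=\langle\iota\rangle$. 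The delicate points are to pass cleanly between $\mathbf{C}_{\mathcal{H}}$, where $\mathrm{F}$ acts by $X\otimes_A-$, and the $\otimes^{(l)}$-action, where the sum over $G$ already builds $X$, and to keep track of the idempotents $\varepsilon$ throughout (cf.\ Remark~\ref{rem001}) so that the $G$-equivariant maps between $\bigoplus_{\psi}{}^{\psi}M_{\mathrm{P}}^{\psi}$ and $\bigoplus_{\psi}{}^{\psi}M_{\mathrm{Q}}^{\psi}$ correspond bijectively to $\mathrm{Hom}_{\tilde{\mathcal{X}}}((M_{\mathrm{P}},\varepsilon),(M_{\mathrm{Q}},\varepsilon))$.
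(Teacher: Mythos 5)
Your proposal is correct and follows essentially the same route as the paper: faithfulness of the cell $2$-representation from $\mathcal{H}$-simplicity, identification of the images of $\mathrm{F}$ and $\mathrm{G}$ with the projective bimodules $X$ and $Y$ of \eqref{eq131} viewed as symmetric bimodules over $G=\langle\iota\rangle$, and then the naturality data of the $2$-natural transformation $\Omega$ (which induces $\iota$ and swaps the projectives) to force the images of all $2$-morphisms to be symmetric, i.e.\ to lie in $\cH_A$. Your write-up is in fact more explicit than the paper's proof about the idempotent bookkeeping and about how $\mathrm{Hom}_{\tilde{\mathcal{X}}}$-spaces correspond to $G$-equivariant bimodule homomorphisms, but the underlying argument is the same.
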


\begin{proof}
As mentioned above, $\underrightarrow{\mathbf{C}_{\mathcal{H}}}$ is equivalent to the cell $2$-representation $\mathbf{C}_{\mathcal{H}}$.
As $\cC$ is $\mathcal{H}$-simple, the $2$-representation $\underrightarrow{\mathbf{C}_{\mathcal{H}}}$ gives a faithful $2$-functor from $\cC$ to $\cC_A$.
Note that the $1$-morphisms $\mathrm{F}$ and $\mathrm{G}$ are represented,
respectively, by $X, Y$ in \eqref{eq13}  under the $2$-functor $\underrightarrow{\mathbf{C}_{\mathcal{H}}}$.
Assume that the family of natural isomorphisms 
$$
\eta:=\{\eta_{\mathrm{H}}\colon 
\Omega\circ\underrightarrow{\mathbf{C}_{\mathcal{H}}}(\mathrm{H})\to \underrightarrow{\mathbf{C}_{\mathcal{H}}}(\mathrm{H})\circ\Omega,\, 
\mathrm{H}\in\cC(\mathtt{i},\mathtt{i})\}
$$
is the data associated to the $2$-natural transformation $\Omega\in\mathrm{End}_{\ccC\text{-afmod}}(\underrightarrow{\mathbf{C}_{\mathcal{H}}})$ constructed above.
Thus, for any $2$-morphism $\alpha: \mathrm{H}\to\mathrm{K}$ in $\cC$,
we have 
$$
(\underrightarrow{\mathbf{C}_{\mathcal{H}}}(\alpha)\circ_{\mathrm{h}}\Omega)\circ_{\mathrm{v}}\eta_{\mathrm{H}}
=\eta_{\mathrm{K}}\circ_{\mathrm{v}}(\Omega\circ_{\mathrm{h}}\underrightarrow{\mathbf{C}_{\mathcal{H}}}(\alpha)).
$$
Due to the fact that $\Omega$ swaps the isomorphism classes of projectives in $\underrightarrow{\mathbf{C}_{\mathcal{H}}}(\mathtt{i})$,
all $A$-$A$-bimodule homomorphisms corresponding to non-zero $2$-morphisms in $\cC$ are symmetric in the way which makes sense.
\end{proof}

\vspace{2mm}

\noindent
Vo.~Ma.: Department of Mathematics, Uppsala University, Box. 480,
SE-75106, Uppsala, SWEDEN, email: {\tt mazor\symbol{64}math.uu.se}

\noindent
Va.~Mi.: School of Mathematics, University of East Anglia,
Norwich NR4 7TJ, UK,  {\tt v.miemietz\symbol{64}uea.ac.uk}

\noindent
X.~Z.: Department of Mathematics, Uppsala University, Box. 480,
SE-75106, Uppsala, SWEDEN, email: {\tt xiaoting.zhang\symbol{64}math.uu.se}

\end{document}